\documentclass[12pt]{article}

\usepackage[a4paper, top=2.5cm, bottom=2.5cm, left=2.5cm, right=2.5cm]{geometry}

\usepackage[dvipsnames, table, xcdraw]{xcolor}  

\usepackage[toc,page]{appendix}

\usepackage[utf8]{inputenc}        
\usepackage{sans} 
\usepackage{pifont} 

\usepackage{graphicx}              
\usepackage{caption,subcaption}    
\usepackage{hyperref}              
\hypersetup{                       
	colorlinks=true,               
	linkcolor=blue,                
	filecolor=magenta,             
	urlcolor=cyan,                 
	citecolor=ForestGreen          
}
\graphicspath{{Figures/pdf/}{Figures/}}

\usepackage{amsmath}               
\usepackage{amssymb}               
\usepackage{amsfonts}              
\usepackage{amsthm}                
\usepackage{mathtools}
\usepackage{array}
\usepackage{multirow}
\usepackage{booktabs}
\usepackage{longtable}
\usepackage{physics}

\usepackage{enumitem}  
\usepackage[breakable,skins]{tcolorbox}

\theoremstyle{style-one}
\newtheorem{definition}{Definition}[section]  	
\newtheorem{proposition}{Proposition}[section]  
\newtheorem{theorem}{Theorem}[section]  		
\newtheorem{problem}{Problem}[section]          

\newtheorem*{definition*}{Definition}  
\newtheorem{remark}{Remark}[section]   

\newtheorem*{problem*}{Problem}  

\usepackage{algorithm}
\usepackage{algpseudocode}

\newcommand{\uH}{\mathbf{p}}
\newcommand{\sH}{\mathbf{q}}
\newcommand{\eH}{\mathbf{g}}
\newcommand{\lH}{\boldsymbol{\lambda}}
\newcommand{\mH}{\boldsymbol{\mu}}

\newcommand{\stH}{\mathbf{\tilde{q}}}
\newcommand{\etH}{\mathbf{\tilde{g}}}

\newcommand{\svgfig}[3]{%
	\begingroup
	\def\svgwidth{#1}%
	\edef\svgfigscale{\fpeval{(\the\dimexpr#1\relax)/(#2*1.00375)}}%
	\fontsize{\fpeval{round(12*\svgfigscale,2)}}{\fpeval{round(14.4*\svgfigscale,2)}}\selectfont
	\input{Figures/pdf/#3.pdf_tex}%
	\endgroup
}

\usepackage{textcomp, upquote}

\usepackage{listings}

\definecolor{exactshade}{HTML}{E8F0F7}   
\definecolor{exactink}{HTML}{14507A}     
\definecolor{errshade}{HTML}{FCEBEA}     
\definecolor{errink}{HTML}{A62B2B}       

\newcommand{\exhead}[1]{\textcolor{exactink}{\textbf{#1}}}
\newcommand{\exval}[1]{\textcolor{exactink}{#1}}

\newcommand{\errval}[1]{\cellcolor{errshade}\textcolor{errink}{#1}}

\definecolor{arterial}{HTML}{C0392B}
\definecolor{capillary}{HTML}{8E44AD}
\definecolor{venous}{HTML}{2471A3}

\definecolor{codegreen}{rgb}{0,0.6,0}
\definecolor{codegray}{rgb}{0.5,0.5,0.5}
\definecolor{codepurple}{rgb}{0.58,0,0.82}
\definecolor{backcolour}{rgb}{0.95,0.95,0.92}

\lstdefinestyle{mystyle}{
	backgroundcolor=\color{backcolour},
	commentstyle=\color{codegreen},
	keywordstyle=\color{magenta},
	numberstyle=\tiny\color{codegray},
	stringstyle=\color{codepurple},
	basicstyle=\ttfamily\scriptsize,
	breakatwhitespace=false,
	breaklines=true,
	captionpos=b,
	keepspaces=true,
	numbers=none,
	numbersep=5pt,
	showspaces=false,
	showstringspaces=false,
	showtabs=false,
	tabsize=2,
	upquote=true
}

\lstnewenvironment{python}[1][]
{
	\pythonstyle
	\lstset{#1}
}
{}

\newcommand\pythoninline[1]{{\pythonstyle\lstinline!#1!}}

\usepackage{authblk}

\title{A Data-Driven Computational Framework for Incompressible Flow in Hydraulic Networks}

\date{}

\author[1]{Pedro B. Bazon}

\author[2]{Cristian G. Gebhardt}

\author[1*]{Roberto F. Ausas}

\affil[1]{Instituto de Ci\^encias Matem\'aticas e de Computa\c{c}\~ao, Universidade de S\~ao Paulo\\ Av. Trab. S\~ao Carlense 400, 13566-590, São Carlos-SP, Brazil}

\affil[2]{University of Bergen, Geophysical Institute and Bergen Offshore Wind Centre\\ All\'{e}gaten 70, 5007 Bergen, Norway.}

\begin{document}
	
	\maketitle
	
	\let\thefootnote\relax\footnotetext{\textsuperscript{*}Corresponding author email: rfausas@icmc.usp.br} 
	
	\begin{abstract}
       The classical procedure for solving hydraulic networks relies on the assumption of constitutive equations, which state the relationship between the pressure gradient and fluxes along an edge. In this paper, we propose a data-driven framework that bypasses these constitutive models, formulating the incompressible flow problem directly on the graph topology. By assigning discrete measured data points to network edges, the problem is cast as a mixed-integer quadratic optimization over nodal pressures, edgewise states, and data assignments, accommodating both laminar (convex) and turbulent (non-convex) regimes. To solve this, we evaluate three algorithms: a GPU-accelerated Brute Force method, the Alternating Direction Method (ADM), and Deterministic Annealing (DA). The Brute Force method certifies global optima for small networks, establishing a good baseline for the iterative solvers. We demonstrate that ADM is highly sensitive to its initialization, requiring a faithful surrogate model to avoid local minima. In contrast, DA eliminates this dependence through unsupervised clustering. By annealing the data assignment from the centroid to strict nearest-neighbor projections, DA consistently reaches the global optimum without prior manifold reconstruction. Furthermore, numerical experiments reveal that DA is robust to noisy data, remains thermodynamically admissible on all but the coarsest and noisiest datasets, and sustains its convergence rate on larger networks where Brute Force is intractable and ADM degrades. Finally, the framework is successfully validated on complex configurations, including mixed-component networks and a $958$-edge arteriovenous bed featuring a non-Newtonian Carreau--Yasuda model, demonstrating its scalability and practical applicability.
	\end{abstract}
	
	\textbf{Keywords:} data-driven computational mechanics, deterministic annealing, mixed-integer quadratic optimization, hydraulic network, incompressible pipe flow.
	
	
	\section{Introduction}

Continuum mechanics provides a rigorous framework for describing the motion and thermomechanical response of solids and fluids under the \emph{continuum hypothesis}. Its governing equations fall into four categories: (i) kinematics, which describes motion and deformation (e.g., strain-displacement relations); (ii) kinetics, involving equilibrium and conservation laws (e.g., mass and linear momentum); (iii) thermodynamic principles, governing the relationships between heat, work, and system equilibrium; and (iv) constitutive equations, which link kinematic and kinetic variables to characterize the material behavior (e.g., stress-strain relations) \cite{Reddy2013}.

Among these, constitutive equations play a central role. They establish the relationships between primary field variables (like density, pressure, temperature, and velocity) and secondary quantities (such as the stress tensor and heat flux). These fundamental equations characterize intrinsic material behavior and provide the closure required for the well-posedness of the continuous problem. Constitutive equations, however, are not derived from any universal physical principle. Instead, they are typically postulated based on experimental observations and phenomenological assumptions, such as \emph{ad hoc} empirical fits. 

In practice, a functional form is assumed for the constitutive response and is then constrained by invoking a set of axioms \cite{Reddy2013,Oden2011}. This empirical foundation has several important consequences. It introduces subjectivity into an otherwise principled description, and it becomes problematic when modeling heterogeneous, anisotropic, or strongly nonlinear behavior. Furthermore, calibrating and validating these models is costly. Once the empirical model is embedded in a coupled system, the fidelity of the numerical solution is fundamentally limited by the fidelity of the fit, regardless of how accurate the discretization might be.

These limitations have motivated computational strategies that integrate data directly into the solution process, bypassing explicit material modeling altogether. \emph{Data-Driven Computational Mechanics} (DDCM), introduced by Kirchdoerfer and Ortiz \cite{KirchdoerferOrtiz2016}, removes the constitutive equation from the formulation. Rather than fitting a model to experimental data and solving the resulting boundary-value problem, the solver assigns each material point the admissible state closest to a prescribed material dataset, subject to compatibility and equilibrium constraints.\footnote{This computational paradigm has also been referred to as a distance-minimizing scheme.} The problem thereby becomes one of constrained optimization, where a proximity measure quantifies the distance between admissible strain-stress pairs and the measured ones.

Building on these initial developments, Kirchdoerfer and Ortiz \cite{Kirchdoerfer2017a} recast the assignment through a \emph{maximum-entropy} principle. They graded the relevance of each datum using a temperature-dependent weight to handle data contaminated by outliers. This approach recovers the distance-minimizing scheme in the zero-temperature limit through a cooling schedule, which the authors call \emph{simulated} annealing, even though the iteration involves no stochastic sampling. This paradigm was subsequently applied to structural dynamics using Newmark time integration \cite{Kirchdoerfer2017b}, and has since been extended to finite deformations \cite{Nguyen2018} and inelastic behavior \cite{Eggersmann2019}. On the theoretical side, the existence of solutions and convergence as the dataset fills the manifold were established in \cite{ContiMullerOrtiz2018}. A complementary approach works in reverse, reconstructing the constitutive manifold from data using manifold-learning techniques before solving \cite{Ibanez2016,Ibanez2017}. Both approaches were later combined within a nonlinear-optimization framework covering static and dynamic elasticity \cite{Gebhardt2020,Gebhardt2020a}.

With few exceptions, this body of work is formulated on continua discretized by finite elements, predominantly in solid mechanics. Even the truss and beam assemblies commonly used to test the paradigm \cite{Nguyen2018} act as discretizations for an underlying continuum. The resulting discrete-continuous structure has received significant attention, including solvability, structure-specific initialization, and combinatorial constraints arising from contact \cite{Gebhardt2025,Gebhardt2024}. The continuous sub-problem obtained once the data assignment is fixed has also been thoroughly analyzed \cite{Bazon2026,Codina2026}. However, in all these cases, the combinatorial structure is inherited from the \emph{mesh}.

Comparatively little attention has been paid to systems whose governing equations are inherently discrete and whose natural structure is a \emph{graph} rather than a mesh. In these networks, unknowns live on nodes and edges, and the conservation and compatibility conditions are topological statements rather than differential ones. The closest precedent to the present work is \cite{GebhardtRoccia2025}, which develops a data-driven framework for electrical circuits based on discrete-continuous optimization in a dynamical setting.

That reference is a natural point of departure because hydraulic networks and electrical circuits are structurally identical. Nodal pressure plays a role analogous to electric potential, while fluid flux corresponds to electric current flowing through edges. This structural correspondence extends to the constitutive relations: the Hagen-Poiseuille law, relating flux to the pressure gradient in a duct, is structurally analogous to Ohm's law, which links current to the potential difference across a conductor. Table~\ref{tab:hndd_physical_interpretation} summarizes the correspondence between the physical quantities and governing equations of these two systems.

\begin{table}[h!]
	\centering
	\resizebox{0.95\textwidth}{!}{%
	\begin{tabular}{l l l}
		\toprule
		\textbf{Concept} & \textbf{Hydraulic} & \textbf{Electrical} \\
		\midrule
		Nodal Unknown            & Pressure $ p $                    & Voltage $ V $ \\
		Edge Unknown             & Flow rate $ q $                  & Electric current $ I $ \\
		Conductance              & Hydraulic conductance $ K $      & Electrical conductance $ G $ \\
		Constitutive Law    	 & Hagen--Poiseuille law              & Ohm's law \\
		Conservation Law         & Mass conservation                  & Kirchhoff's Current Law (KCL) \\
		Compatibility Condition  & Pressure drops in loops            & Kirchhoff's Voltage Law (KVL) \\
		\bottomrule
	\end{tabular}}
	\caption{Analogy between hydraulic and electrical systems: correspondence between key physical quantities and governing principles.}
	\label{tab:hndd_physical_interpretation}
\end{table}

The hydraulic setting is, however, a uniquely demanding testbed for the data-driven paradigm due to factors that have no counterpart in linear circuit theory. The relation between pressure gradient and flux is linear only in the laminar regime. Across the laminar-to-turbulent transition, it acquires a pronounced S-shape, making the set of admissible states genuinely non-convex. Furthermore, the closure is not always a constitutive equation in the strict sense; in the turbulent regime, resistance is inertial rather than material and relies on empirical correlation. A data-driven formulation, which requires only measured pressure-gradient/flux pairs, is indifferent to this distinction, which is precisely part of its appeal. Finally, realistic networks are strongly heterogeneous. Hydraulic systems assemble pipes with varying diameters connected through fitting elements, demanding careful mathematical and numerical treatment.

\textbf{Contributions.} The present work develops a data-driven formulation for incompressible flow in hydraulic networks, posed directly on a graph. Here, mass balance and compatibility are topological statements rather than differential ones, meaning the discrete-continuous structure is intrinsic to the network rather than inherited from a mesh. The resulting mixed-integer quadratic program covers both laminar and turbulent regimes and admits multiple constitutive types per network, each with its own dataset and metric weight. Beyond the theoretical formulation, our key algorithmic and applied novelties include:

\begin{itemize}
	\item[$\triangleright$] \textbf{A certified global optimum as a good baseline.} On networks small enough to enumerate, a GPU-accelerated Brute Force algorithm evaluates every data assignment to return the exact minimizer of the mixed-integer problem, restricted to thermodynamically admissible assignments. Using this certified optimum to benchmark the iterative solvers provides a quantitative comparison of accuracy previously absent from the literature.

	\item[$\triangleright$] \textbf{A Constitutive Manifold Reconstruction (CMR) initialization for the ADM.} We develop two variants: a linear least-squares fit for linear manifolds, and a $C^1$-continuous piecewise-cubic Hermite projection for non-convex ones. Comparing these variants demonstrates that the Alternating Direction Method (ADM) only reaches the global optimum when initialized with a high-fidelity reconstruction.

	\item[$\triangleright$] \textbf{A Deterministic Annealing (DA) solver on graphs.} By relaxing the data assignment into a soft, temperature-dependent weight, DA removes the initialization dependence that limits alternating-direction schemes. Adapting the maximum-entropy relaxation of \cite{Kirchdoerfer2017a} to networks, we characterize its two temperature limits (the data centroid as $\beta \to 0^{+}$, and the strict nearest-neighbor assignment as $\beta \to \infty$) alongside the exponential recovery rate that dictates the terminal temperature in practice.

	\item[$\triangleright$] \textbf{A stress-test of solvers under degraded data.} We systematically evaluate solver performance against variations in dataset resolution and measurement noise (Gaussian and uniform, $100$ realizations per configuration). We scale the network size across Cartesian grids containing both laminar and turbulent edges. By reporting accuracy and thermodynamic admissibility separately, we distinguish between standard optimization failures and violations of the second law.

	\item[$\triangleright$] \textbf{Integration of lumped elements.} A hub-and-arm decomposition expands fitting nodes into short arm sub-edges governed by a pipe-like resistance law, allowing fittings to use specific dataset keys. This introduces the first multi-key benchmark. Because the pipe and arm datasets differ by nearly a decade in aspect ratio, the per-dataset weight becomes critical. Since the certified optimum is unreachable for this scale, assessment relies on the agreement among independent solvers.

	\item[$\triangleright$] \textbf{Application to hemodynamics.} Blood rheology relies heavily on empirical fits, and microcirculatory fields are difficult to measure \emph{in vivo}. We apply our framework to an arteriovenous bed grown via \emph{Murray's law} ($958$ edges, $31$ dataset keys) featuring a shear-thinning capillary closure. This exercises the full capability of the formulation: managing multiple constitutive types, a nonlinear manifold, and massive conductance spreads. The recovered fields correctly return an arteriole-dominated pressure budget, demonstrating the framework's scalability.
\end{itemize}

\textbf{Outline.} Sec.~\ref{sec:hyd_net_model} reviews the classical hydraulic network model, detailing the graph structure, governing equations, constitutive closures, and matrix formulations. Sec.~\ref{sec:data_driven} develops the data-driven formulation, including the proximity measure, the mixed-integer problem, the continuous sub-problem, and sampling strategies. Sec.~\ref{sec:solution_algorithms} details the three solution algorithms (Brute Force, ADM, and DA) and breaks down their computational costs. Sec.~\ref{sec:numerical_experiments} presents the numerical experiments in order of increasing complexity: a linear manifold, a non-convex manifold, network scaling, and mixed-component graphs, concluding with the arteriovenous application. Finally, Sec.~\ref{sec:conclusion} summarizes our conclusions. Three appendices provide the nomenclature, data-generation algorithms, and pseudo-inverse results.

	\section{Review of the Classical Approach for Solving Hydraulic Networks}\label{sec:hyd_net_model}

This section briefly reviews the classical procedure for solving hydraulic networks. First, we present the graph structure used to describe the topology of the hydraulic system. Next, we introduce the governing equations, which include the mass balance, the compatibility relations, and the constitutive equations for both laminar and turbulent regimes. We then rewrite these equations in matrix-vector form and convert them to a dimensionless format by selecting a careful set of scaling parameters. Finally, we succinctly discuss the classical solution of the linear hydraulic system.

\subsection{Graph Structure}

A hydraulic network consists of a collection of ducts interconnected at junctions. From a topological standpoint, the network can be represented by a graph, where ducts are modeled as edges and junctions as nodes (or vertices). Specifically, we consider a hydraulic network as a connected graph embedded in $\mathbb{R}^2$ (Fig.~\ref{fig:hyd_net}). Accordingly, let $G = (\mathcal{V}, \mathcal{E})$ denote such a graph, where $\mathcal{V} = \{ \mathbf{v}_1, \dots, \mathbf{v}_{n_v} \}$ is the set of nodes and $\mathcal{E} = \{ e_1, \dots, e_{n_e} \}$ is the set of edges. Each node $\mathbf{v}_i$ is associated with a position in the plane through the map

\[
\mathbf{x}:\mathcal{V}\to\mathbb{R}^2, \qquad \mathbf{v}_i \mapsto \mathbf{x}_i = [x_i,\, y_i]^{\top}.
\]

The physical connectivity is undirected because fluid can flow in either direction along a duct. However, we assign an arbitrary but fixed reference orientation to each edge. Connectivity is understood in this undirected sense, ensuring that $G$ forms a single connected component.

\begin{figure}[h!]
	\centering
	\def\svgwidth{0.85\columnwidth}
	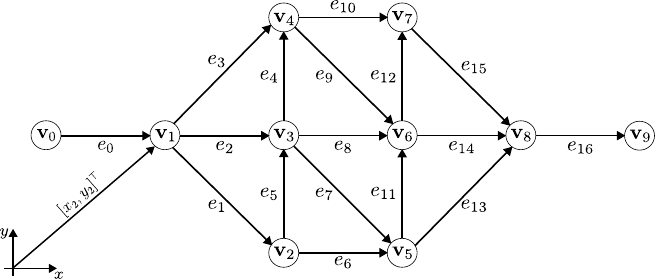
	\caption{Topological representation of a hydraulic network modeled as a connected graph $G = (\mathcal{V}, \mathcal{E})$ embedded in $\mathbb{R}^2$. Physical junctions are denoted by nodes $\mathbf{v}_i \in \mathcal{V}$, while connecting ducts correspond to edges $e_k \in \mathcal{E}$; arrows indicate the reference orientation that fixes the positive flow direction on each edge. A sample position vector is illustrated for node $\mathbf{v}_1$ relative to the global Cartesian coordinate system.}
	\label{fig:hyd_net}
\end{figure}

Following this reference orientation, each edge $e_k \in \mathcal{E}$ is represented as an ordered pair of nodes, $e_k = (\mathbf{v}_i, \mathbf{v}_j)$. Here, $\mathbf{v}_i$ is the initial (tail) node and $\mathbf{v}_j$ is the terminal (head) node, as indicated by the arrows in Fig.~\ref{fig:hyd_net}. The orientation fixes the sign of the flux. A positive flux $q_k > 0$ corresponds to flow from $\mathbf{v}_i$ toward $\mathbf{v}_j$, whereas $q_k < 0$ corresponds to flow in the opposite direction. This orientation is merely a sign convention used to define the incidence matrix $\mathbf{D}$. All governing equations remain invariant if any edge orientation is reversed.

Finally, we denote the total number of nodes and edges in the graph by $|\mathcal{V}| = n_v$ and $|\mathcal{E}| = n_e$, respectively. The set $\mathcal{V}_{D} \subset \mathcal{V}$ denotes the nodes with prescribed pressure. Conversely, $\mathcal{V}_{N} \subset \mathcal{V}$ corresponds to the nodes with an imposed fluid flux. These sets are disjoint, meaning $\mathcal{V}_{D} \cap \mathcal{V}_{N} = \emptyset$.

\subsection{Governing Equations}

To set the stage, let $A_k$, $L_k$, $r_k$, and $D_k := 2r_k$ denote the cross-sectional area, length, inner radius, and diameter of an arbitrary edge $e_k$, respectively. The fluid is assumed incompressible and the flow isothermal, so that any effects induced by temperature variations are neglected.

In this way, three primal fields describe the physical behavior of the internal fluid flow on each edge: the pressure, its gradient, and the flux. Let $p_i$ denote the pressure at node $\mathbf{v}_i$, whereas $g_k$ and $q_k$ correspond to the pressure gradient and the fluid flux on edge $e_k$, respectively. Building upon these definitions, we introduce
\begin{equation}
	\uH = [p_1, \dots, p_{n_v}]^{\top} \in \mathbb{R}^{n_v}, \quad
	\eH = [g_1,\dots, g_{n_e}]^{\top} \in \mathbb{R}^{n_e}, \quad
	\sH = [q_1,\dots, q_{n_e}]^{\top} \in \mathbb{R}^{n_e},
	\label{eq:primal_fields}
\end{equation}
as the vectors of nodal pressures, edge-wise pressure gradients, and fluxes, respectively.

The mathematical description of the flow rests on three equations. First, a node-wise mass balance serves as a physical conservation law. Second, an edge-wise compatibility relation acts as a purely kinematic constraint. Third, a constitutive relation characterizes the intrinsic material behavior. The mass balance and compatibility conditions hold for any fluid and are stated below, while the constitutive relation is addressed in Sec.~\ref{sec:constitutive}. To clarify the ideas, consider the flow through a single edge $e_k$ connecting the tail node $\mathbf{v}_i$ and the head node $\mathbf{v}_j$, as illustrated in Fig.~\ref{fig:hyd_net_edge}. The flow is driven by the pressure difference between the endpoints, $p_i$ and $p_j$. The volumetric flow rate through the edge is given by
\begin{equation}
	q_k = \overline{v}_k\, A_k,
	\label{eq:flux_def}
\end{equation}
where $\overline{v}_k = \frac{1}{A_k}\int_{A_k} v_k \, dA_k$ denotes the cross-sectional average of the axial velocity.
\begin{figure}[h!]
	\centering
	\def\svgwidth{0.25\columnwidth}
	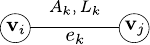
	\caption{Schematic representation of a single directed edge $e_k$ connecting an initial node $\mathbf{v}_i$ to a terminal node $\mathbf{v}_j$.}
	\label{fig:hyd_net_edge}
\end{figure}

In this setting, mass balance is enforced at each node. It requires that the algebraic sum of the flow rates of all incident edges equals a prescribed nodal source or sink term:
\begin{equation}
	\sum_{k \in \mathcal{E}(i)} d_{ik}\, q_k = f_i \qquad \text{for all nodes } i,
	\label{eq:mass_balance_index}
\end{equation}
where

\[
\mathcal{E}(i) := \{\, k \in \{1,\dots,n_e\} \mid \mathbf{v}_i \in e_k \,\}
\]

denotes the set of edges incident to node $\mathbf{v}_i$. The incidence matrix $\mathbf{D}=[d_{ik}]\in\mathbb{R}^{n_v\times n_e}$ is defined as
\begin{equation*}
	d_{ik} =
	\begin{cases}
		+1, & \text{if edge } e_k \text{ leaves node } \mathbf{v}_i,\\
		-1, & \text{if edge } e_k \text{ enters node } \mathbf{v}_i,\\
		0,  & \text{otherwise}.
	\end{cases}
\end{equation*}

The compatibility condition is a purely kinematic constraint. It states that the pressure gradient along each edge equals the pressure difference between its endpoints divided by the edge length:
\begin{equation}
	g_k = \frac{p_j - p_i}{L_k}.
	\label{eq:compat_def}
\end{equation}

Finally, a constitutive equation is required to relate the primary variables (pressure and its gradient) to the secondary variable (flux). This is discussed in the following section.

\subsection{Constitutive Equations}\label{sec:constitutive}

The mass balance and compatibility conditions hold independently of the fluid under study, whereas the constitutive equation does not. Thermodynamic consistency dictates that fluid flows in the direction of decreasing pressure. Guided by this principle, we adopt a general effective edge relation of the form:
\begin{equation}
	g_k = -R_{k}(q_k)\,q_k,
	\label{eq:const_general}
\end{equation}
where $R_{k}(q_k) > 0$ denotes a \textbf{hydraulic resistance} per unit length. Equation~\eqref{eq:const_general} stems from the one-dimensional reduction of a three-dimensional constitutive equation along a single edge. Its dependence on the flux is dictated by the fluid rheology, the pipe geometry, and the flow regime.

The resistance $R_k$ arises through two distinct mechanisms. In the laminar regime, it is derived from a genuine constitutive equation for a generalized Newtonian fluid reduced to a single duct. In the turbulent regime, the flow is inertial rather than material. Therefore, $R_k$ is supplied by an empirical closure that reuses the same mathematical form.

\begin{remark}[Role of the constitutive models]\label{rem:role_closures}
In all cases, the governing system is closed once $R_k$ is specified. This specification is exactly the ingredient that the data-driven formulation bypasses. The specific closures developed below are introduced for three limited purposes. First, they generate the synthetic data pairs that act as a substitute for an experimental campaign (Sec.~\ref{sec:dataset_generation}). Second, they provide the classical reference solutions against which the data-driven results are measured (Sec.~\ref{sec:classical_sol}). The solvers discussed in Sec.~\ref{sec:solution_algorithms} never evaluate these closures, relying entirely on the sampled pairs. Third, they are used to provide an initial guess for some of the solvers introduced later.
\end{remark}

\subsubsection{Laminar Regime}

We first consider the laminar regime, which is characteristic of low-Reynolds-number flows. Here, the resistance follows the first mechanism described above. It is obtained by reducing a genuine three-dimensional constitutive equation to a single duct. We state that reduction once for a general shear-rate-dependent viscosity, and then specialize it to the three fluids used in this work.

\paragraph{$\bullet$ Generalized Newtonian Fluids (GNF).}\label{sec:laminar_fluid_flows}

Many fluids of interest are non-Newtonian. Their viscosity is not constant and varies with the local shear rate. This is captured by the \emph{generalized Newtonian fluid} model \cite{FaithMorrison2001,Bird1987}:
\begin{equation}
	\boldsymbol{\tau} = -\eta(\dot\gamma)\,\dot{\boldsymbol{\gamma}},
	\qquad
	\dot\gamma = \sqrt{\tfrac{1}{2}\,\dot{\boldsymbol{\gamma}}\!:\!\dot{\boldsymbol{\gamma}}} ,
	\label{eq:gnf_constitutive_eq}
\end{equation}
where $\boldsymbol{\tau}$ is the viscous (extra) stress, $\dot{\boldsymbol{\gamma}} = \nabla\mathbf{v} + (\nabla\mathbf{v})^{\top}$ the rate-of-deformation tensor, $\dot\gamma$ the (scalar) shear rate, and $\eta(\dot\gamma)$ the shear-rate-dependent viscosity. We adopt the rheological sign convention throughout, where $\boldsymbol{\tau}$ enters the total momentum flux as $\boldsymbol{\pi} = p\,\mathbf{I} + \boldsymbol{\tau}$.

Consider a fully developed, axisymmetric, and inertialess flow through a straight circular pipe of diameter $D_k$. Equilibrium alone fixes the shear stress, which varies linearly from zero at the axis to $\tau_w$ at the wall for any fluid. The constitutive equation enters only through $\dot\gamma(\tau)$, the inverse of the flow curve $T(\dot\gamma) := \eta(\dot\gamma)\,\dot\gamma$. Integrating the axial velocity over the cross-section then yields the \emph{Rabinowitsch--Mooney} relation \cite{Rabinowitsch1929, Mooney1931, ChhabraRichardson2008}:
\begin{equation}
	q_k = \frac{8\pi}{|g_k|^{3}}\int_{0}^{\tau_w}\!\tau^{2}\,\dot\gamma(\tau)\,\mathrm{d}\tau ,
	\qquad
	\tau_w = \frac{D_k}{4}\,|g_k| ,
	\label{eq:rabinowitsch}
\end{equation}
which is a monotone, odd map through the origin. Its inversion recovers the edge relation~\eqref{eq:const_general}, with $R_k(q_k) = |g_k|/|q_k| > 0$. The functional form of $\eta(\dot\gamma)$ characterizes distinct constitutive fluid models and determines whether~\eqref{eq:rabinowitsch} can be integrated in closed form.

\paragraph{$\bullet$ Newtonian Fluid.} For a constant viscosity, $\eta \equiv \mu_f$, the flow curve is linear and $\dot\gamma(\tau) = \tau/\mu_f$. Equation~\eqref{eq:rabinowitsch} then integrates directly, yielding a resistance of
\begin{equation}
	R_{k} = \frac{128\,\mu_f}{\pi D_k^4},
	\label{eq:hyd_resistance_laminar}
\end{equation}
which is independent of the flux. The constitutive relation~\eqref{eq:const_general} therefore becomes linear, recovering the well-known \emph{Hagen--Poiseuille equation}:
\begin{equation}
	q_k = -K_k\,g_k, \quad \text{where}\; K_k := \frac{1}{R_{k}} = \frac{\pi D_k^{4}}{128\,\mu_f}
	\label{eq:hagen_poiseuille}
\end{equation}
is the \emph{hydraulic conductance} of edge $e_k$.

\paragraph{$\bullet$ Power-Law Fluid.} For a power-law fluid, $\eta = m\,\dot\gamma^{\,n-1}$, the flow curve is again a pure power, meaning $\dot\gamma(\tau) = (\tau/m)^{1/n}$ and~\eqref{eq:rabinowitsch} still integrates in closed form. The resistance now depends on the flux:
\begin{equation}
	R_{k}(q_k) = \frac{4m}{D_{k}}\left[\frac{8(3n+1)}{\pi n D_{k}^3}\right]^{n}|q_k|^{\,n-1} ,
	\label{eq:hyd_resistance_power_law}
\end{equation}
which means that the resistance decreases with the flux for \emph{shear-thinning} fluids $(n<1)$ and increases for \emph{shear-thickening} (or \emph{dilatant}) fluids $(n>1)$. The Newtonian case is recovered for $n=1$ and $m=\mu_f$.

\paragraph{$\bullet$ Carreau--Yasuda Fluid.} The power-law viscosity is unbounded. It diverges at rest and vanishes at high shear, making it unsuitable for fluids such as blood. The experimentally measured viscosity $\eta(\dot\gamma)$ is written as $\mu_{f,\mathrm{app}}$ from here on because it acts as an apparent viscosity. A formulation that captures this behavior is the \emph{Carreau--Yasuda} model \cite{Carreau1972,Yasuda1981}:
\begin{equation}
	\mu_{f,\mathrm{app}}(\dot\gamma) = \mu_{f,\infty} + (\mu_{f,0} - \mu_{f,\infty})\big[1 + (\lambda\,\dot\gamma)^{a}\big]^{\frac{n-1}{a}},
	\label{eq:carreau_yasuda}
\end{equation}
where $\mu_{f,0}$ and $\mu_{f,\infty}$ are the zero- and high-shear asymptotic viscosities, $\lambda$ a characteristic time, $a$ the transition sharpness, and $n<1$ the power-law index. The model interpolates between two Newtonian plateaus to bound the viscosity at both ends. For this model, however, the flow curve $T(\dot\gamma) = \mu_{f,\mathrm{app}}(\dot\gamma)\,\dot\gamma$ has no closed-form inverse, and~\eqref{eq:rabinowitsch} can no longer be integrated in elementary form.

The reduction nevertheless remains exact. Changing the variable of integration in~\eqref{eq:rabinowitsch} from the stress to the shear rate keeps every factor explicit:
\begin{equation}
	q_k(\dot\gamma_w) = \frac{\pi D_k^{3}}{8\,T(\dot\gamma_w)^{3}}
	\int_{0}^{\dot\gamma_w}\! T(\dot\gamma)^{2}\,\dot\gamma\;T'(\dot\gamma)\,\mathrm{d}\dot\gamma ,
	\qquad
	|g_k| = \frac{4\,T(\dot\gamma_w)}{D_k} ,
	\label{eq:carreau_yasuda_quadrature}
\end{equation}
so that sweeping the wall shear rate $\dot\gamma_w$ traces the constitutive curve, requiring a single quadrature and no root-finding. The resistance is bounded by the two Hagen--Poiseuille branches associated with the plateaus, meaning $128\,\mu_{f,\infty}/(\pi D_k^{4}) \le R_k(q_k) \le 128\,\mu_{f,0}/(\pi D_k^{4})$.

\subsubsection{Turbulent Regime}

Most flows encountered in hydraulic systems occur in the transitional-to-turbulent regime, where inertia dominates and the laminar reduction no longer applies. This represents the second mechanism discussed earlier. Turbulence is a property of the flow rather than the fluid itself, meaning the resistance cannot be derived from a constitutive equation. Instead, the pressure losses along a duct are described empirically using the \emph{Darcy--Weisbach} equation.

\paragraph{$\bullet$ Darcy--Weisbach Equation.}\label{sec:turbulent_fluid_flows}\label{sec:darcy_weisbach}

The \emph{Darcy--Weisbach} equation expresses the magnitude of the pressure gradient as
\begin{equation}
	|g_k| = \frac{f_k}{D_k}\frac{\rho\,\overline{v}_k^{2}}{2},
	\label{eq:darcy_weisbach}
\end{equation}
where $f_k$ is the \emph{friction factor} and $\rho$ the fluid density. Recalling $q_k = \overline{v}_k A_k$ from Eq.~\eqref{eq:flux_def} and writing the quadratic term as $|q_k|q_k$ to preserve the sign under flow reversal, the directional convention of Eq.~\eqref{eq:const_general} gives
\begin{equation}
	g_k = -f_k \frac{8\rho}{\pi^2 D_k^5}\,|q_k|q_k,
	\label{eq:darcy_weisbach_expanded}
\end{equation}
which identifies the hydraulic resistance as
\begin{equation}
	R_{k}(q_k) = f_k \frac{8\rho}{\pi^2 D_k^5}\,|q_k|,
	\label{eq:hyd_resistance}
\end{equation}
and reduces the turbulent closure to a single quantity, the friction factor $f_k$. This empirical form remains consistent with the laminar branch. For laminar pipe flow,
\begin{equation}
	f_k = \frac{64}{\mathrm{Re}_k}, \qquad \mathrm{Re}_k = \frac{\rho\,\overline{v}_k\,D_k}{\mu_f},
	\label{eq:friction_laminar}
\end{equation}
with $\mathrm{Re}_k$ serving as the \emph{Reynolds number} of edge $e_k$. Substituting~\eqref{eq:friction_laminar} into~\eqref{eq:hyd_resistance} returns the \emph{Hagen--Poiseuille} resistance~\eqref{eq:hyd_resistance_laminar}. In the transitional-to-turbulent regime, by contrast, $f_k$ depends on both $\mathrm{Re}_k$ and the relative roughness $\varepsilon_k/D_k$. This dependence cannot be derived from theory and has instead been established experimentally \cite{Cengel2014}. This leads to the \emph{Colebrook--White} equation:
\begin{equation}
	\frac{1}{\sqrt{f_k}} = -2.0\,\log_{10}\!\left(\frac{\varepsilon_k/D_k}{3.7} + \frac{2.51}{\mathrm{Re}_k\,\sqrt{f_k}}\right),
	\label{eq:colebrook_white}
\end{equation}
which is classically visualized through the \emph{Moody diagram}. Equation~\eqref{eq:colebrook_white} is \emph{implicit} in $f_k$, meaning that evaluating the friction factor requires iteration. Several explicit correlations have been proposed in its place, each valid over a restricted range \cite{Moody1944,Eck1973,SwameeJain1976,Haaland1983}. Throughout this work, we adopt the \emph{Churchill} correlation \cite{Churchill1977}:
{\small
\begin{equation}
	f_k = 8\left[\left(\frac{8}{\mathrm{Re}_k}\right)^{12} + \frac{1}{(\Theta_k+\Phi_k)^{1.5}}\right]^{1/12},
	\quad 
	\begin{cases}
	\begin{aligned}
	\Phi_k &= \left(\frac{37530}{\mathrm{Re}_k}\right)^{16},\\
	\Theta_k &= \left[-2.457\,\ln\left(\left(\frac{7}{\mathrm{Re}_k}\right)^{0.9}+0.27\,\frac{\varepsilon_k}{D_k}\right)\right]^{16},
	\end{aligned}
	\end{cases}	
	\label{eq:churchill}
\end{equation}}

This correlation is continuous across the laminar, transitional, and fully turbulent regimes. It possesses no singularity, even for $\mathrm{Re}_k < 1$. Finally, because $f_k = f_k(q_k)$ is flow-dependent, both the resistance $R_k(q_k)$ and the conductance $K_k(q_k)$ become nonlinear functions of the flux. The constitutive relation~\eqref{eq:const_general} therefore turns \emph{nonlinear and implicit}:
\begin{equation}
	q_k = -K_k(q_k)\, g_k,
	\label{eq:const_nonlinear_flux}
\end{equation}
which cannot be solved directly for $q_k$ and must instead be treated as the condition $F(q_k, g_k) := q_k + K_k(q_k)\,g_k = 0$.

\subsection{Matrix Form}\label{sec:matrix_form}

It is convenient to collect the governing equations in a standard matrix-vector notation using the primal fields of Eq.~\eqref{eq:primal_fields}. The mass balance from Eq.~\eqref{eq:mass_balance_index} can be written as
\begin{equation}
	\mathbf{D}\,\sH = \mathbf{f},
	\label{eq:mass_balance_matrix}
\end{equation}
where $\mathbf{f} \in \mathbb{R}^{n_v}$ represents the prescribed nodal source or sink term, and $\mathbf{D} \in \mathbb{R}^{n_v \times n_e}$ denotes the discrete divergence (incidence) operator. Similarly, the compatibility condition from Eq.~\eqref{eq:compat_def} reads
\begin{equation}
	\eH = \mathbf{L}^{-1}\,\mathbf{G}\,\uH,
	\label{eq:compat_eq_matrix}
\end{equation}
where $\mathbf{G} \in \mathbb{R}^{n_e \times n_v}$ is the discrete gradient operator, and $\mathbf{L} \in \mathbb{R}^{n_e \times n_e}$ is a diagonal matrix of edge lengths. If all the edges have the same length, then $\mathbf{L} = L\,\mathbf{I}$, with $\mathbf{I} \in \mathbb{R}^{n_e \times n_e}$. It is worth mentioning that the duality relation between the topological incidence and the gradient operator takes the form $\mathbf{G} = -\mathbf{D}^{\top}$.

Finally, for the linear (laminar) closure of Eq.~\eqref{eq:hagen_poiseuille}, the constitutive relation in matrix form reads
\begin{equation}
	\sH = -\mathbf{K}\,\eH,
	\label{eq:const_eq_matrix}
\end{equation}
where $\mathbf{K} \in \mathbb{R}^{n_e \times n_e}$ is a diagonal matrix collecting the fluid-intrinsic hydraulic conductances of the edges. Once again, if the conductance is uniform throughout the network, then $\mathbf{K} = K\,\mathbf{I}$, with $\mathbf{I} \in \mathbb{R}^{n_e \times n_e}$ denoting the identity matrix.

\subsection{Dimensionless Form}\label{sec:dimensionless}
It is convenient to recast the governing equations in dimensionless form. We first introduce the scaling parameters for pressure and length, which are geometric and topological constants independent of the flow regime. The pressure scale is the spread of the prescribed nodal pressures:
\begin{equation}
	\Delta p_{c} := p_{\mathrm{in}} - p_{\mathrm{out}}, \quad
	p_{\mathrm{in}} := \max_{i \in \mathcal{V}_D} p_i, \quad
	p_{\mathrm{out}} := \min_{i \in \mathcal{V}_D} p_i,
\end{equation}
which presupposes at least two distinct prescribed pressures so that $|\mathcal{V}_D| \geq 2$ and $\Delta p_c > 0$. For a single prescribed pressure, the scale is instead built from the imposed nodal fluxes. The length scale is taken as either the mean over all edges or the maximum:
\begin{equation*}
	L_c := \frac{1}{n_e}\sum_{k=1}^{n_e} L_k
	\qquad \text{or} \qquad
	L_c := \max_{k} L_k,
\end{equation*}
and the characteristic diameter $D_c$ is defined analogously from the edge diameters $D_k$. The two together fix the characteristic pressure gradient:
\begin{equation*}
	g_c := \frac{\Delta p_c}{L_c},
\end{equation*}
which is the scale against which the edge-wise gradients are normalized.

The characteristic conductance $K_c$ and flux $q_c$ depend on the constitutive behavior and must be adapted to the flow regime. In both regimes below, $K_c$ and the characteristic diameter $D_c$ are each obtained from the edge-wise conductances $K_k$ and diameters $D_k$ through one of three rules:
\begin{equation}
	K_c :=
	\begin{cases}
		\dfrac{1}{n_e}\displaystyle\sum_{k=1}^{n_e} K_k, & \text{(mean)}, \\[10pt]
		\displaystyle\max_{k} K_k, & \text{(max)}, \\[8pt]
		\left(\dfrac{1}{n_e}\displaystyle\sum_{k=1}^{n_e} \dfrac{1}{K_k}\right)^{-1}, & \text{(harmonic)}.
	\end{cases}\qquad
	D_c :=
	\begin{cases}
		\dfrac{1}{n_e}\displaystyle\sum_{k=1}^{n_e} D_k, & \text{(mean)}, \\[10pt]
		\displaystyle\max_{k} D_k, & \text{(max)}, \\[8pt]
		\left(\dfrac{1}{n_e}\displaystyle\sum_{k=1}^{n_e} \dfrac{1}{D_k}\right)^{-1}, & \text{(harmonic)}.
	\end{cases}
	\label{eq:agg_rule}
\end{equation}
The mean and max rules are dominated by the network's largest-valued edges, whereas the harmonic rule is dominated by the smallest-valued ones. The length scale $L_c$ has no such distinction and is always taken as the mean or the maximum of $L_k$, irrespective of which rule is used for $K_c$. Throughout the examples of Sec.~\ref{sec:numerical_experiments}, the mean rule is adopted for all three quantities ($L_c, D_c, K_c$) unless stated otherwise.

\begin{remark}[Characteristic conductance and diameter]
It is worth pointing out that the mean or max rule is preferable when either $K_k$ or $D_k$ is roughly homogeneous across the network (spanning at most one or two orders of magnitude). The harmonic rule is preferable when it is highly heterogeneous.
\end{remark}

\paragraph{$\bullet$ Laminar regime.}
Here the constitutive relation is linear, so a constant conductance $K_c$ is naturally available and taken as in Eq.~\eqref{eq:agg_rule}. The flux scale then follows as
\begin{equation}
	q_c := K_c \, \frac{\Delta p_c}{L_c}.
\end{equation}

\paragraph{$\bullet$ Turbulent regime.}
Here the conductance is flow-dependent (cf.~Sec.~\ref{sec:turbulent_fluid_flows}) and cannot serve as an a priori scale. Instead, we derive a flux scale from the Darcy--Weisbach relation. Inverting Eq.~\eqref{eq:darcy_weisbach} with the characteristic gradient $\Delta p_c/L_c$ and the characteristic diameter $D_c$ from Eq.~\eqref{eq:agg_rule} gives the characteristic velocity and flux:
\begin{equation}
	v_c := \sqrt{\frac{2 D_c \,\Delta p_c}{f_{\mathrm{ref}}\,\rho\,L_c}},
	\qquad
	q_c := v_c\,\frac{\pi D_c^2}{4},
\end{equation}
with $f_{\mathrm{ref}} = 0.02$ acting as a representative friction factor for turbulent pipe flow throughout. An equivalent conductance is then recovered by matching the linear scaling structure, $K_c = q_c\,L_c/\Delta p_c$.

\subsubsection*{Dimensionless Governing Equations}
By construction, the definitions above preserve the relation $q_c = K_c\,\Delta p_c / L_c$ regardless of the flow regime. With them, the dimensionless variables and matrices are introduced as
\begin{equation*}
	\uH^* = \frac{\uH - p_{\mathrm{out}}\mathbf{1}}{\Delta p_c}, \quad
	\sH^* = \frac{\sH}{q_c}, \quad
	\eH^* = \frac{\eH}{g_c}, \quad
	\mathbf{f}^* = \frac{\mathbf{f}}{q_c}, \quad
	\mathbf{K}^* = \frac{\mathbf{K}}{K_c}, \quad
	\mathbf{L}^* = \frac{\mathbf{L}}{L_c},
\end{equation*}
where $\mathbf{1} \in \mathbb{R}^{n_{v}}$ is the vector of all ones. Since the incidence operators $\mathbf{D}$ and $\mathbf{G} = -\mathbf{D}^{\top}$ are intrinsically dimensionless, and the constant pressure offset $p_{\mathrm{out}}\mathbf{1}$ lies in $\ker(\mathbf{G})$, so that $\mathbf{G}\,\uH = \Delta p_c\,\mathbf{G}\,\uH^*$. Substituting these definitions into the governing equations, and using $q_c = K_c\,\Delta p_c / L_c$ to cancel the prefactors, leaves their structure unchanged:
\begin{equation}
	\mathbf{D}\,\sH^* = \mathbf{f}^*, \qquad
	\eH^* = (\mathbf{L}^*)^{-1}\mathbf{G}\,\uH^*, \qquad
	\sH^* = -\mathbf{K}^*\,\eH^*.
	\label{eq:nd_governing}
\end{equation}

\subsection{The Classical Solution}\label{sec:classical_sol}

Before moving to the data-driven formulation, we recall the classical procedure for solving hydraulic network systems. We restrict the analysis to the \emph{linear constitutive regime} involving a Newtonian fluid in laminar flow. In this regime, the \emph{Hagen--Poiseuille} equation~\eqref{eq:hagen_poiseuille} holds with a constant conductance. The nonlinear closures of Sec.~\ref{sec:constitutive} yield a flux-dependent conductance $K_k(q_k)$ and are solved iteratively using methods like Picard or Newton--Raphson upon the same linear structure derived here.

Substituting the compatibility condition~\eqref{eq:compat_eq_matrix} into the constitutive law~\eqref{eq:const_eq_matrix}, and inserting the result into the mass balance~\eqref{eq:mass_balance_matrix} with $\mathbf{G} = -\mathbf{D}^{\top}$, gives the reduced linear system for the nodal pressures:
\begin{equation}
	\mathbf{D}\,\mathbf{K}\mathbf{L}^{-1}\mathbf{D}^{\top}\,\uH = \mathbf{f}.
	\label{eq:lin_sys_class_sol}
\end{equation}
Eliminating $\eH^*$ and $\sH^*$ from~\eqref{eq:nd_governing} yields the same structure in dimensionless form:
\begin{equation}
	\mathbf{G}^{\top}\mathbf{K}^*(\mathbf{L}^*)^{-1}\mathbf{G}\,\uH^* = \mathbf{f}^*
	\quad\Longleftrightarrow\quad
	\mathbf{D}\,\mathbf{K}^*(\mathbf{L}^*)^{-1}\mathbf{D}^{\top}\,\uH^* = \mathbf{f}^*,
	\label{eq:lin_sys_class_sol_nd}
\end{equation}
differing only through the normalized parameter matrices. The system matrix
\begin{equation}
	\mathbf{A} := \mathbf{G}^{\top}\mathbf{K}\mathbf{L}^{-1}\mathbf{G} \equiv \mathbf{D}\mathbf{K}\mathbf{L}^{-1}\mathbf{D}^{\top}
	\in \mathbb{R}^{n_v \times n_v}
\end{equation}
is a \emph{weighted discrete Laplacian} where each edge $e_k$ contributes with weight $K_k/L_k$, representing its effective conductance per unit length.

For $\mathbf{K}$ and $\mathbf{L}$ diagonal with strictly positive entries, $\mathbf{A}$ is symmetric and positive semidefinite. It is positive definite if and only if $\ker(\mathbf{G})$ is trivial. This is the standard property of weighted graph Laplacians, and follows at once from
\begin{equation}
	\mathbf{v}^{\top}\mathbf{A}\,\mathbf{v} = (\mathbf{G}\mathbf{v})^{\top}\mathbf{K}\mathbf{L}^{-1}(\mathbf{G}\mathbf{v}) = \sum_{k=1}^{n_e} \frac{K_k}{L_k}\,(\mathbf{G}\mathbf{v})_k^{2} \; \geq \; 0,
	\label{eq:laplacian_quadratic_form}
\end{equation}
which vanishes if and only if $\mathbf{G}\mathbf{v} = \mathbf{0}$.

For a connected graph, $\ker(\mathbf{G})$ is spanned by the constant vector $\mathbf{1} \in \mathbb{R}^{n_v}$, meaning the unreduced Laplacian $\mathbf{A}$ is only positive \emph{semi}definite and the nodal pressures are fixed up to an additive constant. Prescribing the pressure at one or more nodes removes this freedom. If $\mathcal{V}_D \neq \emptyset$, deleting the rows and columns of the constrained nodes eliminates the constant-pressure nullspace, causing the reduced matrix to become symmetric positive definite and invertible. The resulting reduced system then has a unique solution.

This procedure also fixes the role of each field. When the constitutive relation is known, only the nodal pressures are treated as primary unknowns. The gradient $\eH = \mathbf{L}^{-1}\mathbf{G}\,\uH$ follows from compatibility, and the flux is determined by $\sH = -\mathbf{K}\,\eH$. In energetic terms, $\eH$ and $\sH$ form a conjugate pair, with $\sH$ acting as the dual member. It is a derived quantity rather than an independent unknown. A discrete dataset breaks this hierarchy. Since no analytical law links $\sH$ to $\eH$, the flux becomes an unknown in its own right. Section~\ref{sec:data_driven} computes $(\uH,\eH,\sH)$ simultaneously, treating the constitutive relation as an objective rather than a strict equation. This explains why $\sH$ appears among the \emph{primal fields}. Moving forward, the terms \emph{primal} and \emph{dual} are used in an optimization context, where dual fields refer to the Lagrange multipliers that enforce physical constraints.

	\section{The Data-Driven Formulation}\label{sec:data_driven}

Turning to the \emph{data-driven} approach, we now formulate the continuous--discrete optimization problem considered herein. \emph{Unless otherwise stated, all quantities in this paper are dimensionless in the sense of Sec.~\ref{sec:dimensionless}. We drop the asterisk for readability. That is, we write $\uH, \eH, \sH$ instead of $\uH^*, \eH^*, \sH^*$, and similarly for the remaining quantities.}

The graph edges do not need to share a single constitutive behavior. Each edge $e_k$ is assigned a \emph{dataset key} $\mu(k) \in \mathcal{M}$, where $\mathcal{M}$ denotes the finite set of distinct constitutive types present in the network (e.g., pipes of different diameters or roughness). More generally, this key represents any grouping where edges share the same underlying constitutive relation (cf.~Sec.~\ref{sec:local_constitutive_sampling}). This assignment induces a partition of the edge set:
\begin{equation}
	\mathcal{E} = \bigsqcup_{\mu \in \mathcal{M}} \mathcal{E}_{\mu}, \qquad
	\mathcal{E}_{\mu} := \{e_k \in \mathcal{E} \mid \mu(k) = \mu\}.
\end{equation}
Each dataset key $\mu \in \mathcal{M}$ is associated with its own set of experimentally measured pressure-gradient/flux data pairs,
\begin{equation}
	\mathcal{D}^{(\mu)} := \big\{(g_n^{(\mu)}, q_n^{(\mu)})\big\}_{n=1}^{\mathcal{N}_m^{(\mu)}},
\end{equation}
where $\mathcal{N}_m^{(\mu)}$ is the number of sampled pairs for that dataset. The pooled dataset of the network is $\mathcal{D} := \bigsqcup_{\mu\in\mathcal{M}} \mathcal{D}^{(\mu)}$, with a total size of $|\mathcal{D}| = \sum_{\mu\in\mathcal{M}} \mathcal{N}_m^{(\mu)}$. This union is disjoint because each pair is tagged by its measured dataset key. Consequently, pairs with identical coordinates but different keys remain distinct.

These data pairs are not independent. Within each key, they sample a common \emph{constitutive manifold} $\Gamma^{(\mu)} \subset \mathbb{R}^2$, which represents the physical relation between the pressure gradient and flux for the edges carrying that key. The dataset and the manifold must remain distinct concepts. The continuous manifold $\Gamma^{(\mu)}$ is neither known nor constructed in the data-driven setting. Instead, the dataset $\mathcal{D}^{(\mu)}$ serves as a finite sample of this manifold. Because the data is generally corrupted by measurement error, $\mathcal{D}^{(\mu)} \not\subset \Gamma^{(\mu)}$ for noisy datasets.

The assignment of data pairs to the graph edges is performed, within each dataset key, using \emph{characteristic functions} defined as
\begin{equation}
	\begin{aligned}
		&\chi_n^{(\mu)} : \mathcal{E}_{\mu} \to \{0,1\},
		&&\qquad \mu \in \mathcal{M},\ n = 1,\dots,\mathcal{N}_m^{(\mu)},\\[1mm]
		\text{s.t. }& \sum_{n=1}^{\mathcal{N}_m^{(\mu)}} \chi_n^{(\mu)}(e_k) = 1,
		&&\qquad \text{for } e_k \in \mathcal{E}_{\mu}.
	\end{aligned}
\end{equation}
These functions act as binary selectors to assign a unique data pair $(g_n^{(\mu)}, q_n^{(\mu)}) \in \mathcal{D}^{(\mu)}$ to each edge $e_k \in \mathcal{E}_\mu$. Thus, every edge draws exclusively from the dataset associated with its own key.
Accordingly, the edge-wise data fields $(\etH_k, \stH_k)$ are:
\begin{equation}
	(\etH_k, \stH_k) := \sum_{n=1}^{\mathcal{N}_m^{(\mu(k))}} \chi_n^{(\mu(k))}(e_k)\,
	\big(g_n^{(\mu(k))},q_n^{(\mu(k))}\big),
	\qquad \text{for } k = 1, \dots, n_e.
	\label{eq:data_fields}
\end{equation}

\subsection{The Proximity Measure}\label{sec:proximity}

The data-driven formulation replaces the constitutive equation by a measure of how far a candidate state lies from the data. To this end, we define a weighted quadratic proximity measure $\mathcal{J}(\eH, \sH) : \mathbb{R}^{n_e} \times \mathbb{R}^{n_e} \rightarrow \mathbb{R}$ between the primal pressure-gradient/flux fields $(\eH,\sH)$ and the distributed data fields $(\etH,\stH)$, built from a symmetric positive-definite diagonal matrix $\mathbf{C} \in \mathbb{R}^{n_e \times n_e}$:
\begin{equation}
	\mathcal{J}(\eH, \sH) := \frac{1}{2}(\eH - \etH)^{\top}\mathbf{C}\,(\eH - \etH) + \frac{1}{2}(\sH - \stH)^{\top}\mathbf{C}^{-1}(\sH - \stH).
	\label{eq:proximity_measure}
\end{equation}
Pairing $\mathbf{C}$ with the pressure-gradient term and its inverse $\mathbf{C}^{-1}$ with the flux term reflects the conjugate nature of the two fields. Each diagonal entry $C_k := (\mathbf{C})_{kk} > 0$ stretches the metric along $\eH_k$ and compresses it along $\sH_k$ by the reciprocal factor. This ensures that the per-edge weighting maintains a unit determinant.

Because each edge $e_k$ draws its data from the dataset $\mathcal{D}^{(\mu(k))}$ of its own key, the natural choice for $C_k$ is also key-dependent and remains constant across each group $\mathcal{E}_\mu$. We choose the per-dataset scalar so that deviations in $\eH$ and $\sH$ contribute equally to $\mathcal{J}$ relative to that specific dataset $\mathcal{D}^{(\mu)}$. Accordingly, let $\sigma_{g}^{(\mu)}$ and $\sigma_{q}^{(\mu)}$ denote the standard deviations of the pressure-gradient and flux coordinates over $\mathcal{D}^{(\mu)}$. We then set:
\begin{equation}
	C_k := c^{(\mu(k))}, \qquad c^{(\mu)} := \frac{\sigma_{q}^{(\mu)}}{\sigma_{g}^{(\mu)}},
	\qquad \mu \in \mathcal{M}.
	\label{eq:c_choice}
\end{equation}
This definition ensures that $\sqrt{c^{(\mu)}}\,\sigma_{g}^{(\mu)} = \sigma_{q}^{(\mu)}/\sqrt{c^{(\mu)}} = \sqrt{\sigma_{g}^{(\mu)}\sigma_{q}^{(\mu)}}$. As a result, neither field dominates the proximity measure within a single constitutive dataset.

\begin{remark}[Weighting \& Nondimensionalization]\label{rem:weighting_vs_nondim}
Weighting and nondimensionalization play distinct, complementary roles. Nondimensionalization applies a single global set of characteristic scales to remove physical units and fix the overall magnitude of the fields. However, it does not balance the two coordinates within a dataset. A single set of scales cannot equalize the spreads of $\eH$ and $\sH$ across every dataset, nor can it keep every edge of a highly heterogeneous network at order one. The per-dataset weight $c^{(\mu)}$ directly addresses this residual imbalance on a dataset-by-dataset basis.
\end{remark}

\subsection{Optimization Problem (OP)}\label{sec:op}

With the proximity measure defined, we can state the full data-driven problem. We seek the state $(\uH,\eH,\sH)$ that lies closest to the data among all states satisfying mass balance, compatibility, and the prescribed boundary conditions, while leaving the data assignment free. Therefore, the primal fields and the selectors $\chi_n^{(\mu)}$ are determined jointly. In explicit form, the problem is:
\begin{equation}
	\begin{aligned}
		\min_{\uH,\eH,\sH,\,\{\chi_n^{(\mu)}\}\in\mathcal{S}} \quad
		& \frac{1}{2}(\eH - \etH)^{\top}\mathbf{C}\,(\eH - \etH)
		+ \frac{1}{2}(\sH - \stH)^{\top}\mathbf{C}^{-1}(\sH - \stH) \\
		\text{s.t.}~
		&\begin{cases}
			\begin{aligned}
				\mathbf{D}\,\sH &= \mathbf{f}, &&\text{in } \mathcal{V} \\
				\eH &= \mathbf{L}^{-1}\mathbf{G}\,\uH, &&\text{in } \mathcal{E} \\
				\mathbf{B}\,\uH &= \uH_{D}, &&\text{on } \mathcal{V}_D,
			\end{aligned}
		\end{cases}
	\end{aligned}
	\label{eq:op}
\end{equation}
where the data fields $(\etH,\stH)$ are tied to the selectors through Eq.~\eqref{eq:data_fields}, and the admissible set of assignments is
\begin{equation}
	\mathcal{S} := \left\{ \{\chi_n^{(\mu)}\} \;\middle|\; \chi_n^{(\mu)}(e_k) \in \{0,1\}, \, \sum_{n=1}^{\mathcal{N}_{m}^{(\mu(k))}}\chi_n^{(\mu(k))}(e_k) = 1, \, \text{for } k = 1, \dots, n_e \right\}.
	\label{eq:admissible_set}
\end{equation}
Here, $\uH_D \in \mathbb{R}^{|\mathcal{V}_D|}$ groups the prescribed pressure values. The matrix $\mathbf{B} \in \mathbb{R}^{\vert{}\mathcal{V}_D\vert{} \times n_v}$ is a selection matrix that extracts the pressure degrees of freedom associated with the nodes in $\mathcal{V}_D$:
\begin{equation*}
	b_{ij} =
	\begin{cases}
		1, & \text{if } j = \nu_i, \\
		0, & \text{otherwise},
	\end{cases}
	\qquad
	\begin{aligned}
		&i=1,\dots,|\mathcal{V}_D|,\\
		&j=1,\dots,n_v,
	\end{aligned}
\end{equation*}
where $\nu_i$ denotes the global index of the $i$-th node in the set $\mathcal{V}_D$.

\begin{remark}[Boundary Conditions]
Prescribed nodal pressures can be imposed using Lagrange multipliers for the algebraic constraints, or by directly modifying the discrete algebraic system derived from the optimality conditions. Both approaches are algebraically equivalent, but we adopt the latter strategy for simplicity.
\label{rem:dirichlet_bc}
\end{remark}

To recast Eq.~\eqref{eq:op} as a saddle-point problem, we introduce two vectors of Lagrange multipliers:
\begin{equation}
	\lH = [\lambda_1,\dots, \lambda_{n_v}]^{\top} \in \mathbb{R}^{n_v}, \qquad
	\mH = [\mu_1,\dots, \mu_{n_e}]^{\top}\in \mathbb{R}^{n_e},
\end{equation}
The former enforces mass balance, and the latter ensures the compatibility condition. For notational convenience, we also define the primal and dual spaces as:
\begin{equation}
	\mathbb{X} := \mathbb{R}^{n_v} \times \mathbb{R}^{n_e} \times \mathbb{R}^{n_e}, \qquad
	\mathbb{Y} := \mathbb{R}^{n_v} \times \mathbb{R}^{n_e},
\end{equation}
such that $(\uH,\eH,\sH) =: \mathbf{x} \in \mathbb{X}$ and $(\lH,\mH) =: \mathbf{y} \in \mathbb{Y}$. With these definitions at hand, the OP reads:

\begin{tcolorbox}[colback=black!3!white, colframe=black!45!white, halign=justify, boxrule=0.6pt, title = OP]
\begin{problem}
\label{prob:op}
Find the primal fields $\mathbf{x} \in \mathbb{X}$, the dual fields $\mathbf{y} \in \mathbb{Y}$, and the data-assignment variables $\{\chi_n^{(\mu)}\} \in \mathcal{S}$ such that
\begin{equation}
			(\mathbf{x},\mathbf{y},\{\chi_n^{(\mu)}\})
			=
			\arg\min_{\substack{\mathbf{x}\in \mathbb{X}\\
					\{\chi_n^{(\mu)}\} \in \mathcal{S}}}
			\;\sup_{\mathbf{y}\in \mathbb{Y}}
			\mathcal{L}(\mathbf{x},\mathbf{y}, \{\chi_n^{(\mu)}\}),
		\end{equation}
where the Lagrangian $\mathcal{L}:\mathbb{X}\times\mathbb{Y}\times\mathcal{S}\to\mathbb{R}$ is defined by
\begin{equation}
			\mathcal{L}(\mathbf{x},\mathbf{y},\{\chi_n^{(\mu)}\}) := \mathcal{J}(\eH,\sH) + \lH^{\top}(\mathbf{D}\,\sH-\mathbf{f}) + \mH^{\top}(\eH-\mathbf{L}^{-1}\mathbf{G}\,\uH),
			\label{eq:lag_op}
		\end{equation}
with $\mathcal{J}$ the proximity measure of Eq.~\eqref{eq:proximity_measure} and the data fields $(\etH,\stH)$ given by Eq.~\eqref{eq:data_fields}. The dependence of $\mathcal{L}$ on the assignment is carried entirely by $(\etH,\stH)$.
\end{problem}
\end{tcolorbox}

Problem~\ref{prob:op} presents important challenges. First, the data-assignment search space grows combinatorially. Each edge $e_k$ selects one datum from its constitutive dataset $\mathcal{D}^{(\mu(k))}$ of size $\mathcal{N}_m^{(\mu(k))}$, meaning the total number of possible data distributions across the graph is:
\begin{equation*}
	N_{\mathrm{comb}} := \prod_{k=1}^{n_e} \mathcal{N}_m^{(\mu(k))}
	= \prod_{\mu\in\mathcal{M}} \big(\mathcal{N}_m^{(\mu)}\big)^{|\mathcal{E}_{\mu}|},
\end{equation*}
This massive combination renders exhaustive enumeration intractable beyond small networks. Second, the problem mixes discrete and continuous variables. The discrete binary indicators assign a unique data pair to each edge, while the continuous variables represent the primal and dual unknowns. Because the proximity functional is quadratic and the physical constraints are linear, this is a Mixed-Integer Quadratic Programming (MIQP) problem \cite{DelPiaDeyMolinaro2017}. This class is known to be NP-hard \cite{GareyJohnson1979}, as it contains integer programming as a special case.

\begin{remark}[Thermodynamic Consistency]\label{rem:thermo_consistency}
Neither Problem~\ref{prob:op} nor its restriction (Problem~\ref{prob:sop}) enforces thermodynamic consistency between the pressure-gradient and flux fields. Because dissipation is a local statement, the \emph{Clausius--Duhem inequality} \cite{Reddy2013} is evaluated edge-wise. A pair $(\eH,\sH)$ is considered \emph{thermodynamically admissible} if:
\begin{equation}
		g_k\,q_k \leq 0, \qquad k = 1, \dots, n_e,
		\label{eq:clausius_duhem}
	\end{equation}
This means the flux on every edge must be directed down its own pressure gradient. Summing over the edges yields the network-wide form $\eH^{\top}\sH \leq 0$. Each of these $n_e$ inequalities is non-convex and bilinear, so we do not impose them as constraints. Instead, we check for admissibility \emph{a posteriori} at each iterate.
\end{remark}

\subsection{Sub-optimization Problem (SOP)}\label{sec:sop}

The combinatorial nature of the assignment $\{\chi_n^{(\mu)}\}$ makes the OP difficult to solve. If this assignment is prescribed in advance, the data fields $(\etH,\stH)$ become known. The minimization in Eq.~\eqref{eq:op} then applies only to the continuous variables, resulting in a quadratic objective with linear equality constraints. We refer to this restricted formulation as the Sub-optimization Problem (SOP).

\begin{tcolorbox}[colback=black!3!white, colframe=black!45!white, halign=justify, boxrule=0.6pt, title = SOP]
\begin{problem}
\label{prob:sop}
Given an assignment $\{\chi_n^{(\mu)}\} \in \mathcal{S}$, and hence the data fields $(\etH,\stH)$, find the primal fields $\mathbf{x} \in \mathbb{X}$ and the dual fields $\mathbf{y} \in \mathbb{Y}$ such that
\begin{equation}
			(\mathbf{x},\mathbf{y})
			=
			\arg\min_{\mathbf{x}\in \mathbb{X}}
			\;\sup_{\mathbf{y}\in \mathbb{Y}}
			\mathcal{L}(\mathbf{x},\mathbf{y}),
		\end{equation}
where $\mathcal{L}$ is the Lagrangian of Eq.~\eqref{eq:lag_op} at the prescribed assignment,
\begin{equation}
			\mathcal{L}(\mathbf{x},\mathbf{y})
			=
			\mathcal{J}(\eH,\sH)
			+ \lH^{\top}(\mathbf{D}\,\sH - \mathbf{f})
			+ \mH^{\top}(\eH - \mathbf{L}^{-1}\mathbf{G}\,\uH).
			\label{eq:lag_sop}
		\end{equation}
\end{problem}
\end{tcolorbox}

From the Lagrangian in Eq.~\eqref{eq:lag_sop}, one readily derives the following optimality conditions:
\begin{equation}
	\begin{array}{rcrcl}
		\nabla_{\uH}\mathcal{L}(\mathbf{x},\mathbf{y}) &=& -\mathbf{G}^{\top}\mathbf{L}^{-1} \mH &=& \mathbf{0}, \\
		\nabla_{\eH}\mathcal{L}(\mathbf{x},\mathbf{y}) &=& \mathbf{C}(\eH - \etH) + \mH &=& \mathbf{0}, \\
		\nabla_{\sH}\mathcal{L}(\mathbf{x},\mathbf{y}) &=& \mathbf{C}^{-1}(\sH - \stH) + \mathbf{D}^{\top} \lH &=& \mathbf{0}, \\
		\nabla_{\lH}\mathcal{L}(\mathbf{x},\mathbf{y}) &=& \mathbf{D} \sH - \mathbf{f} &=& \mathbf{0}, \\
		\nabla_{\mH}\mathcal{L}(\mathbf{x},\mathbf{y}) &=& \eH - \mathbf{L}^{-1}\mathbf{G} \uH &=& \mathbf{0}.
	\end{array}
\end{equation}

These conditions yield the following linear system:
\begin{equation}
	\begin{cases}
		\begin{aligned}
			-\mathbf{G}^{\top}\mathbf{L}^{-1} \mH ~&=~ \mathbf{0}, \\
			\mathbf{C}\,\eH + \mH ~&=~ \mathbf{C}\,\etH, \\
			\mathbf{C}^{-1}\sH + \mathbf{D}^{\top} \lH ~&=~ \mathbf{C}^{-1}\stH, \\
			\mathbf{D} \sH ~&=~ \mathbf{f}, \\
			\eH - \mathbf{L}^{-1}\mathbf{G} \uH ~&=~ \mathbf{0},
		\end{aligned}
	\end{cases}
	\label{eq:opt_cond_sys}
\end{equation}
which should be solved together with the following set of boundary conditions:
\begin{equation}
	\begin{aligned}
		\mathbf{B}\,\uH &= \uH_{D}, \\
		\mathbf{B}\,\lH &= \mathbf{0}_{D},
	\end{aligned} \qquad \mbox{on } \mathcal{V}_{D}.
\end{equation}

By ordering the variables as $(\uH, \eH, \sH, \lH, \mH)$, we can recast the system into block-matrix form. To enforce nodal boundary conditions directly within this structure, we introduce a diagonal mask matrix $\mathbf{I}_D \in \mathbb{R}^{n_v \times n_v}$. Its $i$-th entry is $1$ if $\mathbf{v}_i \in \mathcal{V}_D$ and $0$ otherwise:
\begin{equation}
	\begin{bmatrix}
		\mathbf{I}_D     & \mathbf{0}       & \mathbf{0}       & \mathbf{0}        & -\widehat{\mathbf{G}}^{\top}\mathbf{L}^{-1} \\
		\mathbf{0}       & \mathbf{C}       & \mathbf{0}       & \mathbf{0}        & \mathbf{I}       \\
		\mathbf{0}       & \mathbf{0}       & \mathbf{C}^{-1}  & \mathbf{D}^{\top} & \mathbf{0}       \\
		\mathbf{0}       & \mathbf{0}       & \widehat{\mathbf{D}} & \mathbf{I}_D  & \mathbf{0}       \\
		-\mathbf{L}^{-1}\mathbf{G} & \mathbf{I}       & \mathbf{0}      & \mathbf{0}        & \mathbf{0}       \\
	\end{bmatrix}
	\begin{bmatrix}
		\uH \\
		\eH \\
		\sH \\
		\lH \\
		\mH \\
	\end{bmatrix}
	=
	\begin{bmatrix}
		\hat{\uH} \\
		\mathbf{C}\,\etH \\
		\mathbf{C}^{-1}\stH \\
		\hat{\mathbf{f}} \\
		\mathbf{0} \\
	\end{bmatrix}.
	\label{eq:block_matrix}
\end{equation}
We obtain the modified matrices $\widehat{\mathbf{G}}^{\top}$ and $\widehat{\mathbf{D}}$ by zeroing the rows associated with constrained nodes ($\mathbf{v}_i \in \mathcal{V}_D$). This decoupling allows the diagonal matrix $\mathbf{I}_D$ to strictly enforce the boundary conditions. The right-hand side vectors are adjusted accordingly. At constrained nodes, $\hat{\uH}$ assumes the prescribed pressures $\uH_D$, and the corresponding entries in $\hat{\mathbf{f}}$ are zeroed. Conversely, at free nodes ($\mathbf{v}_i \notin \mathcal{V}_D$), $\hat{\uH}$ is set to $\mathbf{0}$ while $\hat{\mathbf{f}}$ retains its original value ($\hat{\mathbf{f}} \leftarrow \mathbf{f}$).

\begin{remark}
The system of equations \eqref{eq:opt_cond_sys} naturally decouples into two independent subsystems. The first involves $(\sH, \lH)$ and corresponds to the third and fourth equations. The second involves $(\uH, \eH, \mH)$ and corresponds to the first, second, and fifth equations.
\end{remark}

\begin{remark}[The single-dataset case]\label{rem:single_dataset}
For networks with only one constitutive type ($\vert{}\mathcal{M}\vert{} = 1$), we drop the key superscript entirely. In this case, the network draws from a single dataset $\mathcal{D} = \{(g_n,q_n)\}_{n=1}^{\mathcal{N}_m}$. The selectors simplify to $\chi_n(e_k)$, the metric weight of Eq.~\eqref{eq:c_choice} becomes $\mathbf{C} = c\,\mathbf{I}$, and the assignment count reduces to $N_{\mathrm{comb}} = \mathcal{N}_m^{n_e}$. This simplification does not lose generality. The edges couple only through the mass-balance and compatibility constraints, which are independent of the dataset keys, while the data assignment acts independently on each edge. We recover the general case by allowing each edge to draw from its specific dataset $\mathcal{D}^{(\mu(k))}$ with its respective weight $c^{(\mu(k))}$.
\end{remark}

\subsection{On the Dataset Generation}\label{sec:dataset_generation}

The datasets $\mathcal{D}^{(\mu)}$ are built using either \emph{Global Graph Sampling} or \emph{Local Constitutive Sampling}. Both strategies generate dimensionless pairs according to the scales defined in Sec.~\ref{sec:dimensionless}.

Because no experimental database of pressure-gradient/flux pairs is available for these networks, both strategies rely on the closures from Sec.~\ref{sec:constitutive} (as described in Remark~\ref{rem:role_closures}). Sampling a known closure enables strict verification. The classical solution provides a ground truth, ensuring that any discrepancies stem from the solver rather than the data. This approach also allows us to directly control dataset resolution, noise levels, and the sampled operating range. We investigate these three quantities thoroughly in Sec.~\ref{sec:numerical_experiments}.

\subsubsection{Global Graph Sampling}\label{sec:global_graph_sampling}

This strategy solves the dimensionless classical linear problem (Eq.~\eqref{eq:lin_sys_class_sol_nd}) on the target graph to extract the pair $(g_k, q_k)$ from each edge, yielding $\mathcal{N}_m = n_e$. This is the natural way to validate a solver. The exact solution is known \emph{a priori}, and the dataset perfectly matches the pairs the solver is expected to recover (Fig.~\ref{fig:global_graph_sampling}). Consequently, any error is isolated strictly to the solver rather than the data.

\begin{figure}[h!]
	\centering
	\svgfig{0.665\columnwidth}{301.91485383}{global_graph_sampling}
	\caption{(a) \emph{Wheatstone bridge} graph. (b) The pair $(g_k, q_k)$ read off each edge after solving the laminar linear problem under the prescribed boundary conditions, labeled by edge; the dashed line is the constitutive manifold $\Gamma$.}
	\label{fig:global_graph_sampling}
\end{figure}

However, this strategy presents two limitations. First, the dataset provides only one point per edge, resulting in a sparse sample of $\Gamma^{(\mu)}$. This sparsity is benign in the laminar regime, where pairs of a given key form a straight line, but it becomes severe on the curved manifolds of the transitional and turbulent branches. Second, it relies on a baseline solution that is rarely available in practice. For these reasons, we use this strategy solely as a reference construction. Every dataset analyzed in Sec.~\ref{sec:numerical_experiments} is built using the local strategy described below.

\subsubsection{Local Constitutive Sampling}\label{sec:local_constitutive_sampling}

In contrast, this strategy samples the closure directly, which resolves both limitations. It produces arbitrarily many points per curve in any flow regime and does not require a baseline solution. The constitutive curve of a pipe depends entirely on its closure and the parameters $(D, \varepsilon)$. It is independent of the pipe length, which only influences the discrete gradient operator. Each unique combination of parameters defines a distinct dataset key $\mu \in \mathcal{M}$. For example, in the $3 \times 3$ grid of Fig.~\ref{fig:local_constitutive_sampling}(a), the horizontal edges carry parameters $(D_0, \varepsilon_0)$ while the vertical ones carry $(D_1, \varepsilon_1)$. This results in $\mathcal{M} = \{\mu_0, \mu_1\}$ and the partition $\mathcal{E} = \mathcal{E}_{\mu_0} \sqcup \mathcal{E}_{\mu_1}$. Sampling each key separately yields the two datasets shown in Fig.~\ref{fig:local_constitutive_sampling}(b).

\begin{figure}[h!]
	\centering
	\svgfig{1.0\columnwidth}{548.02395487}{local_constitutive_sampling}
	\caption{(a) $3 \times 3$ Cartesian grid with horizontal $(D_0, \varepsilon_0)$ and vertical $(D_1, \varepsilon_1)$ pipe parameters. Dimensionless pressure-gradient--flux pairs on a uniform grid in $q$ over $[-q_{\max}, +q_{\max}]$, for the (b) laminar and (c) turbulent regimes; dashed curves are the constitutive manifolds $\Gamma^{(\mu)}$.}
	\label{fig:local_constitutive_sampling}
\end{figure}

\begin{sloppypar}
For each key, $\mathrm{Re}_{\max}$ dictates the maximum flux to be sampled, $q_{\max} = (\pi/4)\,(\mu_f/\rho)\,D\,\mathrm{Re}_{\max}$. We then select $\mathcal{N}_m^{(\mu)}$ values of $q$ evenly distributed across a uniform grid over $[-q_{\max}, +q_{\max}]$. Sampling both positive and negative values honors the odd symmetry of the \emph{Darcy--Weisbach} relation. Next, each $q$ is mapped to its gradient using the closure: $q$ yields $\mathrm{Re}$, $\mathrm{Re}$ yields $f$ (Eq.~\eqref{eq:churchill}), and $f$ yields $g$ (Eq.~\eqref{eq:darcy_weisbach_expanded}). Finally, the resulting pair is nondimensionalized by $(g_c, q_c)$. Algorithm~\ref{alg:data_generation_churchill} in Appendix~\ref{appendix:algorithms} outlines this entire procedure.
\end{sloppypar}

The value of $\mathrm{Re}_{\max}$ determines which flow regime the dataset represents. Below the transition threshold ($\mathrm{Re}_{\max} \leq 2300$), the constitutive curve is linear. As a result, the uniform sweep spaces the points evenly along the line, as shown in Fig.~\ref{fig:local_constitutive_sampling}(b). Higher Reynolds numbers introduce the transitional and turbulent branches, creating the S-shaped curve seen in Fig.~\ref{fig:local_constitutive_sampling}(c). Because this curve is steep near the origin, the uniform sweep in $q$ naturally concentrates points in that region. In all cases, $\mathrm{Re}_{\max}$ must adequately cover the target network's operating range to prevent the solver from extrapolating beyond the sampled data.

	\section{Solution Algorithms}\label{sec:solution_algorithms}

We now turn to the computational approaches for solving the \emph{data-driven} optimization Problem~\ref{prob:op}. To this end, three computational strategies are considered: the Brute Force approach, the Alternating Direction Method (ADM) with different initialization strategies, and the Deterministic Annealing (DA). These are discussed in detail below.

\begin{remark}
Throughout this section we adopt the single-dataset notation, which entails no loss of generality.
\end{remark}

\subsection{Brute Force}\label{sec:brute_force}

The first approach consists of solving the block-matrix linear system in Eq.~\eqref{eq:block_matrix} (which stems from the optimality conditions of the SOP) while exhaustively testing all possible data assignments. In this way, the approach amounts to solving all possible instances of the Sub-optimization Problem (SOP), rather than solving Problem~\ref{prob:op} at once, and subsequently selecting the one that yields the global minimum. Since all instances are at hand, those violating~\eqref{eq:clausius_duhem} are discarded, so that the minimum is taken over the thermodynamically admissible assignments alone. Three aspects of this strategy are worth highlighting.

First, for small graphs with a limited number of data pairs, this approach may be both effective and computationally efficient for identifying the global optimum. However, applying the same procedure to larger networks quickly becomes computationally infeasible due to its combinatorial nature.

Second, from Eq.~\eqref{eq:block_matrix}, one readily observes that the left-hand block matrix remains unchanged across all data assignments. Therefore, repeatedly solving a linear system for each updated right-hand side would introduce significant overhead. To mitigate this, the inverse of the block matrix is factored once and subsequently applied to all possible right-hand side vectors, thereby reducing the overall computational cost.

Third, to reduce the computational time, this work leverages GPU acceleration. Instead of applying the precomputed inverse to each right-hand side individually, the matrix-matrix multiplications are performed in \emph{batches}, with the batch size being primarily determined by the available GPU memory limits.

In this way, the Brute Force approach requires solving $N_{\mathrm{comb}}$ instances of the SOP. Its cost therefore grows combinatorially with either the number of edges or the dataset size, which restricts the method to small networks and moderate datasets; the detailed accounting is given in Sec.~\ref{sec:complexity}.

\subsection{Alternating Direction Method}\label{sec:adm}

To overcome the combinatorial complexity of the Brute Force approach, we consider an iterative method based on the concept of \emph{alternating directions}, which decouples the enforcement of physical constraints from the selection of constitutive data. This is the solution strategy originally proposed for data-driven computational mechanics in~\cite{KirchdoerferOrtiz2016}, here transposed to the hydraulic network setting.

To set the stage and simplify the notation, we introduce the combined dimensionless vectors of primal and data fields, $\mathbf{z} := (\eH,\sH)$ and $\mathbf{\tilde{z}} := (\etH, \stH)$, together with the $\mathbf{C}$-weighted squared distance induced by the proximity measure of Eq.~\eqref{eq:proximity_measure},
\begin{equation}
	\|\mathbf{z} - \mathbf{\tilde{z}}\|_{\mathbf{C}}^{2} := (\eH - \etH)^{\top}\mathbf{C}\,(\eH - \etH) + (\sH - \stH)^{\top}\mathbf{C}^{-1}(\sH - \stH),
	\label{eq:c_norm}
\end{equation}
so that $\mathcal{J} = \tfrac{1}{2}\|\mathbf{z} - \mathbf{\tilde{z}}\|_{\mathbf{C}}^{2}$. With this notation, Problem~\ref{prob:op} (OP) can be recast as a distance minimization problem between two sets:
\begin{equation}
	\min_{\mathbf{z} \in \mathcal{A},\, \mathbf{\tilde{z}} \in \mathcal{Z}}
	\|\mathbf{z} - \mathbf{\tilde{z}}\|_{\mathbf{C}}^{2}.
	\label{eq:adm_op}
\end{equation}

Here, we define the physically admissible set $\mathcal{A}$ and the constitutive-data set $\mathcal{Z}$ as follows.
\begin{definition}[Physically admissible set]
\[
	\mathcal{A} := \Big\{(\eH,\sH) \in \mathbb{R}^{n_e} \times \mathbb{R}^{n_e} \;\big|\; \exists \uH \in \mathbb{R}^{n_v} \text{ s.t. } \mathbf{D}\,\sH = \mathbf{f}, \; \eH = \mathbf{L}^{-1}\mathbf{G}\,\uH, \;\mathbf{B}\,\uH = \uH_D \Big\}.
	\]
\end{definition}

\begin{definition}[Constitutive-data set]
\begin{multline*}
		\mathcal{Z} :=  \Big\{(\etH,\stH) \in \mathbb{R}^{n_e}\times\mathbb{R}^{n_e} \;\big|\; \exists \{\chi_n\}_{n=1}^{\mathcal{N}_m} \in \mathcal{S} \\
		\text{ s.t. } (\etH_k,\stH_k) = \sum_{n=1}^{\mathcal{N}_m} \chi_n(e_k)\,(g_n,q_n), \;\text{for } k = 1, \dots, n_e \Big\}.
	\end{multline*}
\end{definition}

Being the solution set of linear equality constraints, $\mathcal{A}$ is an affine subspace of $\mathbb{R}^{n_e} \times \mathbb{R}^{n_e}$ (nonempty whenever the problem is feasible) and therefore convex and closed. By contrast, $\mathcal{Z}$ is discrete and non-convex. Given these sets, we now define the following projection operators:
\begin{equation}
	P_{\mathcal{A}}(\mathbf{\tilde{z}}) :=
	\arg\min_{\mathbf{z}\in\mathcal{A}} \|\mathbf{z} - \mathbf{\tilde{z}}\|_{\mathbf{C}}^2,
	\qquad
	P_{\mathcal{Z}}(\mathbf{z}) :=
	\arg\min_{\mathbf{\tilde{z}}\in\mathcal{Z}} \|\mathbf{\tilde{z}} - \mathbf{z}\|_{\mathbf{C}}^2.
\end{equation}

In this case, projecting onto $\mathcal{A}$ corresponds to solving the Sub-optimization Problem~\ref{prob:sop} (SOP). This requires solving the associated optimality conditions, which take the form of the linear saddle-point system in Eq.~\eqref{eq:block_matrix}. On the other hand, the projection onto the data set $\mathcal{Z}$ admits a fully decoupled structure. This decoupling is possible due to the separability of both the $\mathbf{C}$-weighted norm (where $\mathbf{C}$ is diagonal) and the definition of $\mathcal{Z}$, allowing the projection to be carried out independently on each edge. Therefore, the projection is obtained by solving, for each $k = 1, \dots, n_e$,
\begin{equation}
	(\etH_k,\stH_k)
	=
	\arg\min_{(g_n,q_n)\in\mathcal{D}}
	\Big[\, C_k\,(g_n - \eH_k)^2 + C_k^{-1}(q_n - \sH_k)^2 \,\Big].
\end{equation}

In this way, $P_{\mathcal{Z}}$ reduces to a collection of independent nearest-neighbor searches in the dataset $\mathcal{D}$, one for each edge. In practice, these queries can be efficiently performed using spatial data structures such as KD-trees. Thus, we can state the alternating direction algorithm in its most fundamental form as follows:

\begin{algorithm}[H]
	\caption{Alternating Direction Method (ADM)} 
	\label{alg:alt_proj}
	\begin{algorithmic}[1]
		\Require Initial data assignment $\mathbf{\tilde{z}}_0 \in \mathcal{Z}$, maximum iterations $J_{\max}$
		\Ensure Best physically admissible state $\mathbf{z}^{\star}$
		\State $\mathcal{J}^{\star} \gets +\infty$
		\For{$j = 0, 1, 2, \dots, J_{\max}-1$}
		\State $\mathbf{z}_{j+1} \gets P_{\mathcal{A}}(\mathbf{\tilde{z}}_j)$ \Comment{Physical projection (KKT solve, Eq.~\eqref{eq:block_matrix})}
		\State $\mathcal{J}_j \gets \tfrac{1}{2}\big\|\mathbf{z}_{j+1} - P_{\mathcal{Z}}(\mathbf{z}_{j+1})\big\|_{\mathbf{C}}^2$ \Comment{Hard distance objective for monitoring}
		\If{$\mathbf{z}_{j+1}$ satisfies~\eqref{eq:clausius_duhem} \textbf{and} $\mathcal{J}_j < \mathcal{J}^{\star}$} \Comment{Check thermodynamic admissibility}
		\State $\mathbf{z}^{\star} \gets \mathbf{z}_{j+1}, \quad \mathcal{J}^{\star} \gets \mathcal{J}_j$
		\EndIf
		\State $\mathbf{\tilde{z}}_{j+1} \gets P_{\mathcal{Z}}(\mathbf{z}_{j+1})$ \Comment{Hard nearest-neighbor data projection}
		\EndFor
	\end{algorithmic}
\end{algorithm}
Each iteration of Algorithm~\ref{alg:alt_proj} consists of one linear solve followed by $n_e$ independent nearest-neighbor queries over $\mathcal{D}$. Both are cheap to repeat: only the right-hand side of the block system changes between iterations, so the factorization is computed once and reused, and $\mathcal{D}$ being fixed, the KD-tree is likewise built only once. The resulting cost is far below that of the Brute Force for all but the smallest networks (cf.~Sec.~\ref{sec:complexity}).

This speed comes at the price of weaker optimality guarantees. Each projection minimizes the same distance over one of its two arguments, so the sequence $\|\mathbf{z}_j - \mathbf{\tilde{z}}_j\|_{\mathbf{C}}$ never increases; and since $\mathcal{Z}$ is finite, the assignment $\{\chi_n\}$ takes only finitely many values, so the iteration reaches a fixed point of $P_{\mathcal{Z}} \circ P_{\mathcal{A}}$ after finitely many steps. The only exception is a tie in the nearest-neighbor search, which may cause cycling between states of equal objective. That fixed point, however, is only a \emph{partial} minimum: it is optimal in $\mathbf{z}$ for the current assignment, and optimal in $\mathbf{\tilde{z}}$ for the current state, but not necessarily optimal in both at once. Because $\mathcal{Z}$ is discrete and non-convex, it may lie far from the global optimum. Escaping this local minimum would require reassigning several edges simultaneously, which the alternating structure cannot do. The ADM is therefore a local method applied to a discrete problem, and the fixed point it reaches depends strongly on the initialization $\mathbf{\tilde{z}}_0$. To reduce this dependence, we consider three initialization strategies, detailed below: Constitutive Manifold Reconstruction (CMR), Null-space, and Random.

\subsubsection{Constitutive Manifold Reconstruction (CMR) Initialization}\label{sec:cmr}

This initialization mimics the procedure one would follow in a real application: the dataset is approximated by a fitting model, yielding a reconstructed representation of the constitutive manifold. Two strategies are considered: (a) a linear least-squares fit and (b) a projection onto Hermite polynomials.

\paragraph{(a) Linear Least-Squares Fit.}\label{sec:cmr_linear_ls}
The dataset is approximated by a single linear constitutive equation. To this end, let $\mathbf{g}_{\mathcal{D}}, \mathbf{q}_{\mathcal{D}} \in \mathbb{R}^{\mathcal{N}_m}$ collect the pressure-gradient and flux components of the $\mathcal{N}_m$ data pairs in $\mathcal{D}$,
\begin{equation}
	\mathbf{g}_{\mathcal{D}} := [g_1, \dots, g_{\mathcal{N}_m}]^{\top}, \qquad \mathbf{q}_{\mathcal{D}} := [q_1, \dots, q_{\mathcal{N}_m}]^{\top},
\end{equation}
so that a candidate conductance $k$ predicts the fluxes $-k\,\mathbf{g}_{\mathcal{D}}$. We seek the single value that best represents the constitutive equation underlying the dataset.

\begin{tcolorbox}[colback=black!3!white, colframe=black!45!white, halign=justify, boxrule=0.6pt, title = Least-Squares Fitting]
\begin{problem}
Find the effective conductance $k_{\text{fit}} \in \mathbb{R}$ such that
\begin{equation}
			k_{\text{fit}} = \arg\min_{k \in \mathbb{R}}\,\mathcal{J}_{\mathrm{LS}}(k), 
		\end{equation}
where the functional $\mathcal{J}_{\mathrm{LS}}: \mathbb{R} \to \mathbb{R}$ is defined as:
\begin{equation*}
			\mathcal{J}_{\mathrm{LS}}(k) := \|k\,\mathbf{g}_{\mathcal{D}} + \mathbf{q}_{\mathcal{D}}\|_2^2.
		\end{equation*}
\end{problem}
\end{tcolorbox}

Whenever $\mathbf{g}_{\mathcal{D}} \neq \mathbf{0}$, $\mathcal{J}_{\mathrm{LS}}$ is a strictly convex quadratic and therefore has a unique global minimizer, characterized by the stationarity condition $\mathcal{J}_{\mathrm{LS}}'(k) = 0$ (the \emph{normal system}):
\begin{equation}
	(\mathbf{g}_{\mathcal{D}}^{\top}\mathbf{g}_{\mathcal{D}})\, k_{\text{fit}} = -\mathbf{g}_{\mathcal{D}}^{\top}\mathbf{q}_{\mathcal{D}},
	\label{eq:normal_system}
\end{equation}
whose solution (Appendix~\ref{appendix:pseudoinverse}) reduces in this scalar case to
\begin{equation}
	k_{\text{fit}} = - \frac{\mathbf{g}_{\mathcal{D}}^{\top}\mathbf{q}_{\mathcal{D}}}{\|\mathbf{g}_{\mathcal{D}}\|_2^2}.
	\label{eq:k_fit_ls}
\end{equation}

The fit is dissipative, $k_{\text{fit}} > 0$, whenever $\mathbf{g}_{\mathcal{D}}^{\top}\mathbf{q}_{\mathcal{D}} < 0$; this holds by construction for noise-free datasets, but is no longer guaranteed once noise is injected. In addition, the fitted value defines a linear constitutive equation in the form of Eq.~\eqref{eq:const_eq_matrix}, with uniform conductance $\mathbf{K} = k_{\text{fit}}\,\mathbf{I}$. Solving Eq.~\eqref{eq:lin_sys_class_sol} for the nodal pressures $\uH_0$ recovers the edge-wise fields $\eH_0 = \mathbf{L}^{-1}\mathbf{G}\uH_0$ and $\sH_0 = -k_{\text{fit}}\eH_0$, so that the state $\mathbf{z}_0 = (\eH_0, \sH_0)$ is thermodynamically admissible edge by edge, $(\eH_0)_k\,(\sH_0)_k = -k_{\text{fit}}\,(\eH_0)_k^{2} \leq 0$ for every $k$, whenever the fit is dissipative. The initial data assignment $\mathbf{\tilde{z}}_0 = P_{\mathcal{Z}}(\mathbf{z}_0)$ inherits this property when the data pairs are themselves dissipative.

\paragraph{(b) Piecewise-Cubic Hermite Least-Squares Projection.}\label{sec:cmr_hermite_projection}

For datasets arising from nonlinear constitutive behavior -- such as those generated by turbulent flow models -- a single effective conductance fails to capture the geometry of the constitutive manifold. The second variant instead constructs a globally $C^1$-continuous approximation $q = \hat{q}_{h}(g)$ directly from $\mathcal{D}$.

To this end, the gradient domain is partitioned into a one-dimensional finite element mesh constructed symmetrically about the origin, with a mesh node placed exactly at $g = 0$ to preserve thermodynamic consistency. Over each element, the local constitutive relation is approximated using cubic Hermite polynomials, expressed in vector form as
\begin{equation}
	\hat{q}_h^{(E)}(g) = \mathbf{N}(\xi)\,\mathbf{c}_E,
\end{equation}
where $\xi \in [0,1]$ is the local element coordinate, $\mathbf{N}(\xi) \in \mathbb{R}^{1 \times 4}$ contains the four Hermite shape functions, and $\mathbf{c}_E \in \mathbb{R}^4$ collects the two nodal degrees of freedom at each endpoint -- i.e., the flux value and its derivative with respect to $g$.

The global degrees of freedom vector $\mathbf{c}$ is determined by solving the normal equations of the $L_2$-projection of $\mathcal{D}$ onto the finite element space:
\begin{equation}
	\left(\mathbf{P}^{\top}\mathbf{P} \right)\mathbf{c} = \mathbf{P}^{\top}\mathbf{q}_{\mathcal{D}},
	\label{eq:hermite_normal_system}
\end{equation}
where $\mathbf{P}\in \mathbb{R}^{\mathcal{N}_m \times 2n_{\mathrm{nodes}}}$ is the global observation matrix assembling the Hermite shape functions evaluated at the data points. Provided the data populate every element of the mesh, $\mathbf{P}$ has full column rank and the projection is again well posed, with unique solution $\mathbf{c} = \mathbf{P}^{\dagger}\mathbf{q}_{\mathcal{D}}$ (Appendix~\ref{appendix:pseudoinverse}). The null-flux constraint $\hat{q}_h(0) = 0$ is then strongly enforced by modifying the system at the nodal flux degree of freedom corresponding to the origin. Note that the derivative degree of freedom at the origin remains free, allowing the approximated model to correctly represent a constitutive equation with nonzero slope at $g = 0$.

With $\hat{q}_h$ established, the initial state $(\uH_0, \eH_0, \sH_0)$ is computed by solving the nonlinear hydraulic problem via a Picard iteration (Algorithm~\ref{alg:picard_solver}). At each step $j$, the effective conductance on each edge is updated via the secant approximation,
\begin{equation}
	K_{kk}^{(j)} = -\frac{\hat{q}_h\!\left(\eH_k^{(j)}\right)}{\eH_k^{(j)}}, \qquad k = 1, \dots, n_e,
\end{equation}
and the resulting pressure system is solved until convergence. The $C^1$-continuity of $\hat{q}_h$, together with the imposed null-flux condition $\hat{q}_h(0)=0$, is essential here: it ensures that the secant conductance remains bounded as $\eH_k^{(j)} \to 0$, with limit $-\hat{q}_h'(0)$. Provided the surrogate is strictly dissipative at the origin ($-\hat{q}_h'(0) > 0$), as expected from thermodynamic consistency, each $K_{kk}^{(j)}$ is strictly positive, so the pressure system remains symmetric positive-definite throughout the iteration (cf.~Eq.~\eqref{eq:lin_sys_class_sol} and Eq.~\eqref{eq:laplacian_quadratic_form}). Upon convergence, the initial data assignment is finalized via $\mathbf{\tilde{z}}_0 = P_{\mathcal{Z}}(\mathbf{z}_0)$, with $\mathbf{z}_0 = (\eH_0,\sH_0)$, providing a physically admissible and thermodynamically consistent starting point for the ADM.

The Picard scheme is preferred over Newton--Raphson because it requires only the secant conductance $-\hat{q}_h(g)/g$, which is directly available from the surrogate. By contrast, Newton's method would additionally require $\mathrm{d}\hat{q}_h/\mathrm{d}g$, which is a quantity more sensitive to noise and potentially ill-behaved near the origin.

\subsubsection{Null-space Initialization}\label{sec:null_space_init}
This strategy computes an initial state $(\uH_0, \eH_0, \sH_0)$ that strictly satisfies the physical constraints, without imposing any constitutive assumption. The Dirichlet data are built in through the same masking device as in Eq.~\eqref{eq:block_matrix}: the diagonal mask $\mathbf{I}_D$ and the row-zeroed incidence operator $\widehat{\mathbf{D}}$ replace the mass-balance row of every constrained node by its pressure constraint. This yields the under-determined linear system
\begin{equation}
	\underbrace{
		\begin{bmatrix}
			\mathbf{I}_D & \mathbf{0} & \widehat{\mathbf{D}} \\
			-\mathbf{L}^{-1}\mathbf{G} & \mathbf{I} & \mathbf{0}
	\end{bmatrix}}_{=:\,\mathbf{M}}
	\underbrace{
		\begin{bmatrix}
			\uH \\
			\eH \\
			\sH
	\end{bmatrix}}_{=:\,\mathbf{x}} =
	\underbrace{
		\begin{bmatrix}
			\hat{\mathbf{f}} \\
			\mathbf{0}
	\end{bmatrix}}_{=:\,\mathbf{r}}
	\quad \implies \quad \mathbf{M}\mathbf{x} = \mathbf{r},
	\label{eq:under_determined_syst}
\end{equation}
with $\hat{\mathbf{f}}$ carrying the prescribed pressures at the constrained nodes and the imposed nodal fluxes elsewhere. Every solution therefore satisfies mass balance $\mathbf{D}\,\sH = \mathbf{f}$ at the free nodes, the boundary conditions $\mathbf{B}\,\uH = \uH_D$ on $\mathcal{V}_D$, and compatibility $\eH = \mathbf{L}^{-1}\mathbf{G}\,\uH$ on every edge.

Let $m$ and $n$ denote the number of rows and columns of $\mathbf{M}$, respectively. With $m < n$ and $\mathrm{rank}(\mathbf{M}) = m$, the solutions of~\eqref{eq:under_determined_syst} form an affine subspace of dimension $n - m$; among them we select the one of minimum Euclidean norm, which carries no arbitrary component from $\ker(\mathbf{M})$.

\begin{tcolorbox}[colback=black!3!white, colframe=black!45!white, halign=justify, boxrule=0.6pt, title = Minimum-Norm Solution]
\begin{problem}
Find the primal fields $\mathbf{x}_0 \in \mathbb{R}^n$ such that
\begin{equation}
			\mathbf{x}_0 = \arg\min_{\mathbf{x} \in \mathbb{R}^n} \mathcal{J}_{\mathrm{MN}}(\mathbf{x}) , \quad \text{s.t. }\mathbf{M}\mathbf{x} = \mathbf{r},
		\end{equation}
where the functional $\mathcal{J}_{\mathrm{MN}} : \mathbb{R}^n \to \mathbb{R}$ is defined as
\begin{equation*}
			\mathcal{J}_{\mathrm{MN}}(\mathbf{x})  := \frac{1}{2}\|\mathbf{x}\|_2^2.
		\end{equation*}
\end{problem}
\end{tcolorbox}

Since $\mathbf{M}$ has full row rank, this solution exists and is unique, and is given by the right Moore--Penrose pseudo-inverse (Appendix~\ref{appendix:pseudoinverse})
\begin{equation}
	\mathbf{x}_0 = \mathbf{M}^{\dagger}\mathbf{r} = \mathbf{M}^{\top}(\mathbf{M}\mathbf{M}^{\top})^{-1}\mathbf{r}.
\end{equation}

Once the initial fields $\mathbf{x}_0 = (\uH_0, \eH_0, \sH_0)$ are computed, the initial data assignment follows as $\mathbf{\tilde{z}}_0 = P_{\mathcal{Z}}(\mathbf{z}_0)$, with $\mathbf{z}_0 = (\eH_0, \sH_0)$. Carrying no constitutive information, this state is physically admissible but not thermodynamically consistent by construction.

\subsubsection{Random Initialization}
The random initialization is the simplest strategy considered. It assigns an arbitrary data pair $(g_n, q_n) \in \mathcal{D}$ independently to each edge, without any physical or constitutive guidance. As a consequence, the initial state $\mathbf{\tilde{z}}_0$ is unlikely to be physically admissible, and the ADM is exposed to the full non-convexity of $\mathcal{Z}$ from the very first iteration.

\subsection{Deterministic Annealing}\label{sec:da}

The initializations above steer the ADM toward better solutions, but none removes its fundamental limitation: the hard projection $P_{\mathcal{Z}}$ commits to a single data pair per edge at every iteration, so the method may lock onto a suboptimal assignment $\{\chi_n\}$ with no mechanism to revise it. We therefore relax the combinatorial assignment into a continuous one, governed by an inverse temperature $\beta > 0$: the physical projection $P_{\mathcal{A}}$ is left unchanged, while $P_{\mathcal{Z}}$ gives way to a \emph{soft} counterpart $P_{\mathcal{Z}}^{\beta}$. The inverse temperature $\beta$ is raised gradually, carrying the iterates from a unique, initialization-independent basin at low $\beta$ to the original non-convex problem at high $\beta$.

This is the \emph{Deterministic Annealing} (DA), proposed in the statistical-mechanics treatment of clustering~\cite{RoseGurewitzFox1990, Rose1998} and brought into data-driven computational mechanics by Kirchdoerfer and Ortiz~\cite{Kirchdoerfer2017a} under the name of \emph{simulated} annealing. We keep the former designation, which is standard in the clustering literature. The temperature enters through the maximum-entropy weights rather than through random trial moves, ensuring the iteration is deterministic and its outcome reproducible. Its construction and properties, transposed here to the hydraulic network setting, are detailed below.

Recall that each edge is assigned a single data pair through the binary selectors $\chi_n(e_k) \in \{0,1\}$ subject to $\sum_{n=1}^{\mathcal{N}_m} \chi_n(e_k) = 1$ (the admissible set $\mathcal{S}$). The DA relaxes these indicators into soft membership weights $w_n(e_k) \in [0,1]$ living on the probability simplex, i.e., we replace $\mathcal{S}$ by its convex hull
\begin{equation}
	\overline{\mathcal{S}} :=
	\left\{ \{w_n\}_{n=1}^{\mathcal{N}_m} \;\middle|\;
	w_n(e_k) \geq 0, \;
	\sum_{n=1}^{\mathcal{N}_m} w_n(e_k) = 1, \;\text{for } k = 1,\dots,n_e
	\right\}.
\end{equation}
Accordingly, the edge-wise data fields become convex combinations of the measured pairs,
\begin{equation}
	(\etH_k, \stH_k) = \sum_{n=1}^{\mathcal{N}_m} w_n(e_k)\,(g_n,q_n),
	\qquad k = 1,\dots,n_e,
	\label{eq:da_soft_fields}
\end{equation}
which range over the per-edge convex hull of $\mathcal{D}$,
\begin{equation}
	\overline{\mathcal{Z}} :=
	\Big\{(\etH,\stH) \;\big|\;
	\exists\, \{w_n\} \in \overline{\mathcal{S}} \text{ s.t. }
	(\etH_k,\stH_k) = \textstyle\sum_{n} w_n(e_k)(g_n,q_n) \Big\}
	= \operatorname{conv}(\mathcal{Z}).
\end{equation}
The vertices of $\overline{\mathcal{Z}}$, recovered when each $w_n(e_k) \in \{0,1\}$, are exactly the points of $\mathcal{Z}$; for any $0 < w_n(e_k) < 1$, the target in Eq.~\eqref{eq:da_soft_fields} is not an actual measurement, but a weighted average of several data pairs.

Given a physically admissible state $\mathbf{z} = (\eH,\sH) \in \mathcal{A}$, the hard projection $P_{\mathcal{Z}}$ selects, on each edge, the single nearest data pair. The DA instead assigns weights through a \emph{maximum-entropy} principle~\cite{Rose1998, Kirchdoerfer2017a}: among all soft assignments achieving a prescribed mean proximity to the data, the least committal (highest-entropy) one is chosen. Introducing the per-edge $\mathbf{C}$-weighted squared distances
\begin{equation}
	d_{k,n}^2 := C_k\,(\eH_k - g_n)^2 + C_k^{-1}(\sH_k - q_n)^2,
\end{equation}
(which carry no factor $\tfrac{1}{2}$, so that $\mathcal{J} = \tfrac{1}{2}\sum_{k}\sum_{n}\chi_n(e_k)\,d_{k,n}^2$ at a hard assignment, and the inverse temperature below absorbs a factor two with respect to the convention of \cite{Kirchdoerfer2017a}), and the Shannon entropy of the edge weight vector $w_k := \big(w_1(e_k), \dots, w_{\mathcal{N}_m}(e_k)\big)$,
\begin{equation}
	\mathcal{H}(w_k) := -\sum_{n=1}^{\mathcal{N}_m} w_n(e_k)\,\ln w_n(e_k),
\end{equation}
the soft projection is defined edge-wise as the minimizer of a \emph{free-energy} functional balancing proximity against entropy.

\begin{tcolorbox}[colback=black!3!white, colframe=black!45!white, halign=justify, boxrule=0.6pt, title = Soft Projection $P_{\mathcal{Z}}^{\beta}$ (per edge)]
\begin{problem}
Given $(\eH_k,\sH_k)$ and an inverse temperature $\beta > 0$, find the weights $w_k \in \Delta_{\mathcal{N}_m}$ (the probability simplex) such that
\begin{equation}
			w_k = \arg\min_{w_k \in \Delta_{\mathcal{N}_m}} \mathcal{F}_k^{\beta}(w_k),
		\end{equation}
where the free-energy functional $\mathcal{F}_k^{\beta} : \Delta_{\mathcal{N}_m} \to \mathbb{R}$ is
\begin{equation*}
			\mathcal{F}_k^{\beta}(w_k)
			:= \underbrace{\sum_{n=1}^{\mathcal{N}_m} w_n(e_k)\, d_{k,n}^2}_{\text{mean proximity}}
			\;-\; \frac{1}{\beta}\,\underbrace{\mathcal{H}(w_k)}_{\text{entropy}}.
		\end{equation*}
\end{problem}
\end{tcolorbox}

Since $\mathcal{H}(w_k) = -\sum_n w_n(e_k)\ln w_n(e_k)$, the free energy reads
\begin{equation}
	\mathcal{F}_k^{\beta}(w_k) = \sum_{n=1}^{\mathcal{N}_m} w_n(e_k)\,d_{k,n}^2
	+ \frac{1}{\beta}\sum_{n=1}^{\mathcal{N}_m} w_n(e_k)\,\ln w_n(e_k),
	\label{eq:da_free_energy_expanded}
\end{equation}
whose first term is linear in $w_k$, while the second (the negative entropy) has a diagonal Hessian with entries $1/\big(\beta\,w_n(e_k)\big) > 0$. Hence $\mathcal{F}_k^{\beta}$ is strictly convex for every $\beta > 0$ and, $\Delta_{\mathcal{N}_m}$ being compact and convex, the minimizer exists, is unique, and is characterized by stationarity alone. It therefore suffices to enforce the equality constraint, through the Lagrangian
\begin{equation}
	\mathcal{L}(w_k,\lambda) := \mathcal{F}_k^{\beta}(w_k)
	+ \lambda\Big(\textstyle\sum_{n=1}^{\mathcal{N}_m} w_n(e_k) - 1\Big).
\end{equation}
Stationarity with respect to each weight gives
\begin{equation}
	\frac{\partial \mathcal{L}}{\partial w_n(e_k)}
	= d_{k,n}^2 + \frac{1}{\beta}\big[\ln w_n(e_k) + 1\big] + \lambda = 0
	\quad\Longrightarrow\quad
	w_n(e_k) = \exp\!\big(-\beta\, d_{k,n}^2\big)\,\exp\!\big(-\beta\lambda - 1\big),
\end{equation}
where the second factor is independent of $n$ and is thus fixed by the constraint itself,
\begin{equation}
	\sum_{n=1}^{\mathcal{N}_m} w_n(e_k) = 1
	\quad\Longrightarrow\quad
	\exp(-\beta\lambda - 1) = \frac{1}{Z_k^{\beta}},
	\qquad
	Z_k^{\beta} := \sum_{m=1}^{\mathcal{N}_m} \exp\!\big(-\beta\,d_{k,m}^2\big),
\end{equation}
which yields the \emph{Gibbs (softmax) distribution}
\begin{equation}
	w_n(e_k) =
	\frac{\exp\!\big(-\beta\, d_{k,n}^2\big)}
	{\displaystyle\sum_{m=1}^{\mathcal{N}_m} \exp\!\big(-\beta\, d_{k,m}^2\big)}.
	\label{eq:da_gibbs_weights}
\end{equation}
The exponential of the entropy, $N_{\mathrm{eff}} := \exp\mathcal{H}(w_k)$, counts the data points effectively active on edge $e_k$: it equals $\mathcal{N}_m$ for uniform weights and $1$ once the assignment has hardened.

The operator $P_{\mathcal{Z}}^{\beta}$ returns the soft data fields of Eq.~\eqref{eq:da_soft_fields}, assembled from these weights. The inverse temperature $\beta$ continuously deforms the soft projection between two extremes. At high temperatures ($\beta \to 0^{+}$), the weights become uniform and the data fields collapse onto the centroid of the dataset. This ensures that the composite map $P_{\mathcal{Z}}^{\beta} \circ P_{\mathcal{A}}$ is smooth and possesses an essentially unique attraction basin, thereby yielding an initialization-independent algorithm. At low temperatures ($\beta \to \infty$), on the other hand, the hard projection is recovered. These two limiting properties are established by the following proposition.

\begin{proposition}[Temperature limits]\label{prop:dda_limits}
Let $(\eH_k,\sH_k)$ be fixed and let the weights be given by the Gibbs distribution~\eqref{eq:da_gibbs_weights}. Then:
\begin{enumerate}[label=(\roman*)]
\item As $\beta \to 0^{+}$, $w_n(e_k) \to 1/\mathcal{N}_m$ for all $n$, so the soft target converges to the \emph{data centroid}
\[
		(\etH_k,\stH_k) \to
		\frac{1}{\mathcal{N}_m}\sum_{n=1}^{\mathcal{N}_m}(g_n,q_n),
		\]
which is independent of $(\eH_k,\sH_k)$ and hence of the initialization.
\item If the nearest neighbor $n^{\ast} := \arg\min_n d_{k,n}^2$ is unique, then as $\beta \to \infty$, $w_{n^{\ast}}(e_k) \to 1$ and $w_n(e_k) \to 0$ for $n \neq n^{\ast}$, so
\[
		P_{\mathcal{Z}}^{\beta}(\mathbf{z}) \to P_{\mathcal{Z}}(\mathbf{z}),
		\]
i.e., the soft projection recovers the hard nearest-neighbor projection of the ADM. Moreover, writing $d_{(1)}^2 \le d_{(2)}^2$ for the two smallest distances on edge $e_k$, the approach to the hard limit is governed by their gap,
\[
		1 - w_{n^{\ast}}(e_k) \;=\; \mathcal{O}\!\Big(\exp\!\big[-\beta\,\big(d_{(2)}^2 - d_{(1)}^2\big)\big]\Big),
		\]
so that the terminal temperature required to resolve the assignment grows as the two closest candidates come together.
\end{enumerate}
\end{proposition}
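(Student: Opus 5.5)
The proof works directly from the closed form \eqref{eq:da_gibbs_weights}, with $(\eH_k,\sH_k)$ held fixed. Then the distances $d_{k,n}^2$ are a finite set of nonnegative constants, and every limit is a limit of finitely many exponentials in $\beta$.

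For (i), each factor $\exp(-\beta d_{k,n}^2)$ tends to $1$ as $\beta\to0^{+}$, uniformly in $n$ because there are only $\mathcal{N}_m$ of them. So the numerator tends to $1$ and $Z_k^{\beta}$ tends to $\mathcal{N}_m$, which gives $w_n(e_k)\to 1/\mathcal{N}_m$. The soft target in \eqref{eq:da_soft_fields} is a finite linear combination of the fixed data pairs with these weights, so it converges to the arithmetic mean of $\mathcal{D}$. That limit contains no $d_{k,n}^2$, and hence neither the state nor the initialization enters it.

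For (ii), divide numerator and denominator by $\exp(-\beta d_{(1)}^2)$, where $d_{(1)}^2=d_{k,n^\ast}^2$:
\[
w_n(e_k)=\frac{\exp\!\big[-\beta\,(d_{k,n}^2-d_{(1)}^2)\big]}{1+\sum_{m\neq n^\ast}\exp\!\big[-\beta\,(d_{k,m}^2-d_{(1)}^2)\big]}.
\]
Uniqueness of the minimizer means every gap $d_{k,m}^2-d_{(1)}^2$ with $m\neq n^\ast$ is at least $\delta:=d_{(2)}^2-d_{(1)}^2>0$. Each of the $\mathcal{N}_m-1$ terms in the sum is therefore at most $e^{-\beta\delta}$, and the denominator is $1+\mathcal{O}(e^{-\beta\delta})$. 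This gives
\[
1-w_{n^\ast}(e_k)=\frac{S}{1+S}\le S\le(\mathcal{N}_m-1)\,e^{-\beta\delta},
\qquad S:=\sum_{m\neq n^\ast}\exp\!\big[-\beta\,(d_{k,m}^2-d_{(1)}^2)\big].
\]
This inequality yields both the convergence $w_{n^\ast}\to1$ and the stated rate, with a constant that depends only on $\mathcal{N}_m$. Each individual $w_n$ with $n\neq n^\ast$ is also bounded by $e^{-\beta\delta}$, so the other weights vanish. The convergence of the projection follows because $\|P_{\mathcal{Z}}^{\beta}(\mathbf{z})_k-(g_{n^\ast},q_{n^\ast})\|\le \sum_{n\neq n^\ast}w_n\,\|(g_n,q_n)-(g_{n^\ast},q_{n^\ast})\|$, a finite sum of vanishing terms. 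The global statement for $\mathbf{z}$ is obtained by applying this on each of the finitely many edges, under the assumption that the nearest neighbor is unique on every edge.

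The only delicate point is the rate statement. The $\mathcal{O}$ constant, $\mathcal{N}_m-1$ or more sharply the number of data at distances near $d_{(2)}^2$, has to be kept independent of $\beta$, and the proof must make explicit that the bound degenerates as $\delta\to0$. For the remark on the terminal temperature, I would note that guaranteeing $1-w_{n^\ast}\le\epsilon$ requires $\beta\gtrsim \ln\!\big((\mathcal{N}_m-1)/\epsilon\big)/\delta$, which blows up as the two closest candidates merge. The same computation shows the role of the uniqueness hypothesis: under a tie the weight splits evenly among the tied indices and no hard limit exists.
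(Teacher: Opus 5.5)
Your proof is correct and follows the same route as the paper. For (i) you let each Gibbs factor tend to $1$; for (ii) you divide by the dominant term $\exp(-\beta d_{(1)}^2)$ and read both the convergence and the rate off the runner-up gap. Your version also makes explicit what the paper leaves implicit: the $\mathcal{O}$-constant through $1-w_{n^\ast}\le(\mathcal{N}_m-1)e^{-\beta\delta}$, a direct bound on the projection error, and the edgewise uniqueness assumption needed for the global limit $P_{\mathcal{Z}}^{\beta}(\mathbf{z})\to P_{\mathcal{Z}}(\mathbf{z})$.
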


\begin{proof}
(i) As $\beta \to 0^{+}$, $\exp(-\beta d_{k,n}^2) \to 1$ for every $n$, so each weight tends to $1/\mathcal{N}_m$ and Eq.~\eqref{eq:da_soft_fields} reduces to the arithmetic mean of $\mathcal{D}$.
(ii) To show that $w_{n^{\ast}}(e_k) \to 1$, we divide both the numerator and the denominator of Eq.~\eqref{eq:da_gibbs_weights} by the dominant term $\exp(-\beta d_{k,n^{\ast}}^2)$, yielding:
$$
	w_{n^{\ast}}(e_k) = \frac{1}{1 + \sum_{m \neq n^{\ast}} \exp\!\Big(-\beta \big(d_{k,m}^2 - d_{k,n^{\ast}}^2\big)\Big)}.
	$$
Since $n^{\ast}$ is the unique minimum, the difference $d_{k,m}^2 - d_{k,n^{\ast}}^2 > 0$ for all $m \neq n^{\ast}$. As $\beta \to \infty$, the exponent $-\beta(d_{k,m}^2 - d_{k,n^{\ast}}^2) \to -\infty$, so every term in the sum vanishes. The same expression yields the stated rate: the sum is dominated by its largest term, namely the one built on the runner-up distance $d_{(2)}^2$, whence $1 - w_{n^{\ast}}(e_k) = \mathcal{O}\big(\exp[-\beta(d_{(2)}^2 - d_{(1)}^2)]\big)$ with $d_{(1)}^2 = d_{k,n^{\ast}}^2$. Thus, $w_{n^{\ast}}(e_k) \to 1$. Because the weights sum to unity, it follows immediately that $w_n(e_k) \to 0$ for all $n \neq n^{\ast}$. Substituting these limits into Eq.~\eqref{eq:da_soft_fields} gives $(\etH_k,\stH_k) \to (g_{n^{\ast}}, q_{n^{\ast}})$, which is precisely the $k$-th component of $P_{\mathcal{Z}}(\mathbf{z})$.
\end{proof}

In summary, annealing $\beta$ from small to large realizes a \emph{homotopy} (graduated non-convexity) that tracks the global minimizer from the convex surrogate into the original non-convex problem. Finally, for completeness, we discuss the algorithmic aspects of DA.

The inverse temperature $\beta$ is increased geometrically across iterations according to
\begin{equation}
	\beta_{j+1} = \min\!\big(\gamma\,\beta_j,\; \beta_{\max}\big),
	\qquad \beta_0 \text{ given},\;\; \gamma > 1,
	\label{eq:dda_schedule}
\end{equation}
where $\gamma$ is the cooling ratio and $\beta_{\max}$ is a final value large enough that $P_{\mathcal{Z}}^{\beta_{\max}}$ is, for all practical purposes, indistinguishable from $P_{\mathcal{Z}}$. Given the values of $\beta_0$ and $\beta_{\max}$, together with the maximum number of iterations $J_{\max}$, the geometric ratio is computed as
\begin{equation}
	\gamma = \left(\frac{\beta_{\max}}{\beta_0}\right)^{1/J_{\max}}.
	\label{eq:da_gamma_auto}
\end{equation}

It is worth mentioning that the cooling rate plays an important role in the algorithm, since it governs the method's ability to reach the global optimum. Cooling too fast sharpens the assignment before the trajectory settles into the global basin. This risks the same trapping as the ADM. With these definitions at hand, the DA proceeds exactly as the ADM (Algorithm~\ref{alg:alt_proj}), substituting $P_{\mathcal{Z}}^{\beta_j}$ for $P_{\mathcal{Z}}$ and cooling the temperature after each step.

\begin{algorithm}[H]
	\caption{Deterministic Annealing (DA)}
	\label{alg:dda}
	\begin{algorithmic}[1]
		\Require Schedule $(\beta_0, \beta_{\max}, J_{\max})$; cooling ratio $\gamma$ from Eq.~\eqref{eq:da_gamma_auto}
		\Ensure Best physically admissible state $\mathbf{z}^{\star}$
		\State $\mathbf{\tilde{z}}_0 \gets \dfrac{1}{\mathcal{N}_m}\sum_{n=1}^{\mathcal{N}_m}(g_n,q_n)$ on every edge
		\Comment{centroid: $\beta \to 0$ limit, initialization-independent}
		\State $\beta \gets \beta_0$, \quad $\mathcal{J}^{\star} \gets +\infty$
		\For{$j = 0,1,2,\dots,J_{\max}-1$}
		\State $\mathbf{z}_{j+1} \gets P_{\mathcal{A}}(\mathbf{\tilde{z}}_j)$
		\Comment{physical projection (KKT solve, Eq.~\eqref{eq:block_matrix})}
		\State $\mathcal{J}_j \gets \tfrac{1}{2}\big\|\mathbf{z}_{j+1} - P_{\mathcal{Z}}(\mathbf{z}_{j+1})\big\|_{\mathbf{C}}^2$
		\Comment{\emph{hard} objective, for monitoring/comparability}
		\If{$\mathbf{z}_{j+1}$ satisfies~\eqref{eq:clausius_duhem} \textbf{and} $\mathcal{J}_j < \mathcal{J}^{\star}$}
		\State $\mathbf{z}^{\star} \gets \mathbf{z}_{j+1}$, \quad $\mathcal{J}^{\star} \gets \mathcal{J}_j$
		\EndIf
		\State $\mathbf{\tilde{z}}_{j+1} \gets P_{\mathcal{Z}}^{\beta}(\mathbf{z}_{j+1})$
		\Comment{soft projection, Eqs.~\eqref{eq:da_soft_fields}--\eqref{eq:da_gibbs_weights}}
		\State $\beta \gets \min(\gamma\,\beta,\;\beta_{\max})$
		\Comment{cool down}
		\EndFor
	\end{algorithmic}
\end{algorithm}

Regarding the implementation of Algorithm~\ref{alg:dda}, it is worth highlighting that although the iterations are driven by the soft targets $\mathbf{\tilde{z}}_j \in \overline{\mathcal{Z}}$, the monitored objective is the hard distance $\tfrac{1}{2}\|\mathbf{z}_{j+1} - P_{\mathcal{Z}}(\mathbf{z}_{j+1})\|_{\mathbf{C}}^2$ to the nearest actual data pair. This keeps the objective function on the same footing as the Brute Force and ADM objectives, which measure proximity to genuine measurements in $\mathcal{D}$ rather than to convex combinations in $\overline{\mathcal{Z}}$.

Lastly, each DA iteration performs one physical projection $P_{\mathcal{A}}$, exactly as in the ADM, followed by the soft projection. The latter is the only structural difference between the two methods: since every datum carries a nonzero weight, evaluating Eq.~\eqref{eq:da_gibbs_weights} requires all $\mathcal{N}_m$ distances per edge, whereas $P_{\mathcal{Z}}$ retrieves a single nearest neighbor from the KD-tree. The per-iteration cost of the two methods nevertheless remains comparable. What sets them apart is the iteration budget. The DA needs a sufficiently slow cooling schedule to track the global optimum. Both points are quantified in Sec.~\ref{sec:complexity}.

\subsection{Computational Cost: Offline and Online}\label{sec:complexity}

We now collect the computational cost of the three algorithms. The aim is not a rigorous operation count. Constants, memory traffic, and the GPU batching discussed in Sec.~\ref{sec:brute_force} are all left aside. Instead, the focus is on estimating how each solver scales with the network size, the dataset size, and the iteration budget. This scaling, rather than the constants, is what separates the three methods. Two classes of expense are worth keeping apart. \emph{Offline} costs are paid once, before the iterative loop, and depend on the dataset and on the network but not on the number of iterations. \emph{Online} costs are paid inside the loop, once per iteration. For the Brute Force approach, these are paid once per candidate assignment. Table~\ref{tab:complexity} collects both, with $N = 2n_v + 3n_e$ the number of degrees of freedom of the saddle-point system in Eq.~\eqref{eq:block_matrix} and $n_{\mathrm{dof}}$ that of the Hermite CMR surrogate.

\begin{table}[h!]
	\centering
	\resizebox{0.98\textwidth}{!}{%
	\begin{tabular}{l l l l}
		\toprule
		& \textbf{Stage} & \textbf{Cost} & \textbf{Required by} \\
		\midrule
		\multirow{4}{*}{\rotatebox{90}{\textbf{Offline}}}
		& Dataset generation (sampling of $\mathcal{D}$)      & $\mathcal{O}(|\mathcal{D}|)$                & all \\
		& Factorization of the block matrix~\eqref{eq:block_matrix} & $\mathcal{O}(N^3)$                     & all \\
		& KD-tree construction over $\mathcal{D}$             & $\mathcal{O}(\mathcal{N}_m \log \mathcal{N}_m)$ & ADM, DA \\
		& CMR surrogate fit (assembly $+$ normal equations)   & $\mathcal{O}(\mathcal{N}_m + n_{\mathrm{dof}}^3)$ & ADM-CMR \\
		\midrule
		\multirow{3}{*}{\rotatebox{90}{\textbf{Online}}}
		& Physical projection $P_{\mathcal{A}}$ (back-substitution) & $\mathcal{O}(N^2)$                     & all \\
		& Hard data projection $P_{\mathcal{Z}}$ (KD-tree queries)  & $\mathcal{O}(n_e \log \mathcal{N}_m)$  & ADM, DA\textsuperscript{$\dagger$} \\
		& Soft data projection $P_{\mathcal{Z}}^{\beta}$ (all distances) & $\mathcal{O}(n_e \mathcal{N}_m)$   & DA \\
		\midrule
		\multirow{3}{*}{\rotatebox{90}{\textbf{Total}}}
		& Brute Force  & $\mathcal{O}\!\left(N^3 + N_{\mathrm{comb}}\,N^2\right)$ & \\
		& ADM          & $\mathcal{O}\!\left(N^3 + J_{\max}(N^2 + n_e \log \mathcal{N}_m)\right)$ & \\
		& DA           & $\mathcal{O}\!\left(N^3 + J_{\max}(N^2 + n_e \mathcal{N}_m)\right)$ & \\
		\bottomrule
	\end{tabular}}
	\caption{Offline and online computational costs of the three solvers. \textsuperscript{$\dagger$}The DA evaluates $P_{\mathcal{Z}}$ only to monitor the hard objective, not to drive the iteration.}
	\label{tab:complexity}
\end{table}

Three consequences deserve comment. First, the cubic factorization dominates the offline cost but is paid only once: the left-hand side of Eq.~\eqref{eq:block_matrix} is the same for every data assignment and every iteration, so all three solvers reuse a single factorization. This is what makes the Brute Force tractable at all, each enumerated assignment costing $\mathcal{O}(N^2)$ instead of $\mathcal{O}(N^3)$.

Second, the online costs differ in where they grow. The Brute Force is combinatorial in $N_{\mathrm{comb}}$, and hence limited to small networks and moderate datasets, whereas the ADM and the DA are linear in the iteration budget $J_{\max}$. Between the latter two, the only structural difference is the data projection: the hard one queries the KD-tree at $\mathcal{O}(\log \mathcal{N}_m)$ per edge, while the soft one evaluates all $\mathcal{N}_m$ exponentials, since every datum carries a nonzero weight. The DA thus trades a logarithmic query for a linear sweep. This is a modest penalty, which is dominated at these network sizes by the $\mathcal{O}(N^2)$ back-substitution.

Third, and more consequential than the per-iteration arithmetic, the DA needs a larger iteration budget. The cooling schedule must be slow enough for the trajectory to settle into the global basin before the weights sharpen.

	\section{Numerical Experiments}\label{sec:numerical_experiments}
In this section, we evaluate three methods introduced previously: the Brute Force method, the Alternating Direction Method (ADM), and Deterministic Annealing (DA). We apply these methods to the Optimization Problem~\ref{prob:op} (OP) to assess their accuracy, robustness, and computational efficiency across various scenarios. All datasets are synthetic. We report both the datasets and the recovered fields in dimensionless variables, unless we state physical units explicitly.

\subsection{Benchmarks on Hydraulic Networks}\label{sec:benchmark_networks}

We first assess the methods on four benchmarks of increasing structural complexity. These include a linear constitutive manifold, a nonlinear manifold, variations in network size and topology, and a network with several distinct constitutive types. The first two benchmarks are small enough for Brute Force enumeration. This provides a certified global optimum for Problem~\ref{prob:op} to serve as a reference for the ADM and DA approximations. For the last two benchmarks, the high number of candidate assignments precludes this enumeration, similar to the hemodynamics application.

\subsubsection{Linear Constitutive Manifolds}\label{sec:laminar_fluid_flow}\label{sec:adm_da_baseline}

We begin with the laminar regime. Here, the constitutive relation is linear and the dataset, although discrete, samples a convex manifold. This is perhaps the simplest setting to expose the solver's behavior, and it serves as the baseline against which we later contrast the non-convex turbulent case.

\paragraph{$\bullet$ Comparative algorithmic performance and initialization dependence.}

In this first experiment, we compare the initialization-independent DA and the ADM against the Brute Force solution on a simple topology that replicates the classic \emph{Wheatstone bridge} of electrical circuit theory. The ADM is evaluated under three initialization strategies: CMR, Null-space, and Random.

Figure~\ref{fig:wheatstone_topology} illustrates this network alongside the prescribed boundary conditions. The duct geometry and the properties of the working fluid (water at 20$^\circ$C) are uniform across all edges, and Table~\ref{tab:network_parameters} summarizes every numerical value.

\begin{figure}[h!]
	\centering
	\begin{minipage}[c]{0.45\textwidth}
		\centering
		\def\svgwidth{\linewidth}
		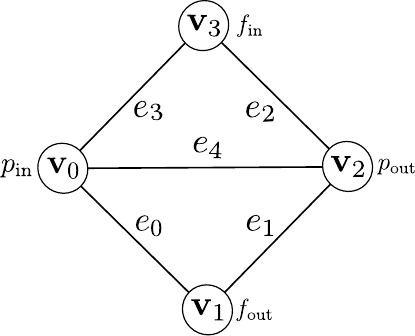
		\caption{Schematic illustration of the \emph{Wheatstone bridge} hydraulic network topology.}
		\label{fig:wheatstone_topology}
	\end{minipage}\hfill
	\begin{minipage}[b]{0.525\textwidth}
		\centering
		\captionof{table}{Geometric properties, fluid physical parameters, and boundary conditions for the \emph{Wheatstone bridge} hydraulic network.}
		\label{tab:network_parameters}
		\vspace{0.15cm}
		\resizebox{\linewidth}{!}{%
			\begin{tabular}{llcl}
				\toprule
				\textbf{Parameter} & \textbf{Symbol} & \textbf{Value} & \textbf{Unit} \\
				\midrule
				\multicolumn{4}{c}{\textit{Graph Topology \& Geometry}} \\
				\midrule
				Number of nodes & $n_v$ & 4 & -- \\
				Number of edges & $n_e$ & 5 & -- \\
				Pipe length, arms (Edges 0--3) & $L_{0\text{--}3}$ & $\sqrt{2}$ & $\mathrm{m}$ \\
				Pipe length, bridge (Edge 4) & $L_{4}$ & $2$ & $\mathrm{m}$ \\
				Pipe diameter   & $D$   & $1.0 \times 10^{-2}$ & $\mathrm{m}$ \\
				Pipe roughness  & $\varepsilon$ & $1.0 \times 10^{-5}$ & $\mathrm{m}$ \\
				\midrule
				\multicolumn{4}{c}{\textit{Fluid Properties (Water at 20$\,^\circ$C)}} \\
				\midrule
				Fluid density     & $\rho$ & $998.0$ & $\mathrm{kg/m^3}$ \\
				Dynamic viscosity & $\mu_f$  & $1.0 \times 10^{-3}$ & $\mathrm{Pa \cdot s}$ \\
				\midrule
				\multicolumn{4}{c}{\textit{Boundary Conditions}} \\
				\midrule
				Inlet pressure (Node 0)  & $p_{\text{in}}$  & $101.35 \times 10^3$ & $\mathrm{Pa}$ \\
				Outlet pressure (Node 2) & $p_{\text{out}}$ & $101.30 \times 10^3$ & $\mathrm{Pa}$ \\
				Sink flux (Node 1)       & $f_{\text{out}}$     & $-4.0 \times 10^{-6}$ & $\mathrm{m^3/s}$ \\
				Source flux (Node 3)     & $f_{\text{in}}$     & $8.0 \times 10^{-6}$  & $\mathrm{m^3/s}$ \\
				\bottomrule
			\end{tabular}
		}
	\end{minipage}
\end{figure}

Since the hydraulic conductance is uniform throughout the network, a single constitutive dataset is constructed using the \emph{Local Constitutive Sampling} strategy, with $\mathcal{N}_{m} = 30$ data points up to a maximum Reynolds number of $\mathrm{Re}_{\max} = 1500$. This upper bound falls entirely within the laminar regime ($\mathrm{Re} < 2300$), yielding the linear constitutive dataset illustrated by the blue markers in Fig.~\ref{fig:wheatstone_exp_adm_init_linear}(c)--(h). No artificial noise is introduced at this stage, so the analysis focuses on the algorithms' accuracy and their sensitivity to the initial guess.

In the numerical setup, the CMR strategy employs the linear least-squares model of Sec.~\ref{sec:cmr}(a), while the Random initialization relies on a fixed seed for reproducibility. The ADM is executed for a fixed limit of 50 iterations with no early-termination criterion. For comparison, the DA solver is run over the same 50-iteration budget, cooling the soft assignment from an initial inverse temperature $\beta_0 = 0.02$ to a terminal value of $\beta_{\max} = 10^{3}$. This choice of $\beta_{\max}$ is justified \emph{a posteriori} by the dedicated cooling study of Fig.~\ref{fig:wheatstone_da_cooling}.

\begin{figure}[h!]
	\centering
	\svgfig{1.0\columnwidth}{505.92945009}{wheatstone_exp_adm_init_linear}
	\caption{Performance and convergence of the ADM and DA solvers on the \emph{Wheatstone bridge} topology under laminar flow regime. (a) Global optimum solution obtained via the Brute Force method, where the nodal colormap denotes the dimensionless pressure field ($p$) and arrows indicate flux ($q$) magnitudes and directions. (b) Evolution of the objective function ($\mathcal{J}$) across iterations; cross markers ($\times$) indicate states that violate the second law of thermodynamics. Panels (c--e) and (f--h) show the phase-space states at the first and final ADM iterations, respectively, with columns corresponding to the CMR (c, f), Null-space (d, g), and Random (e, h) initializations. Panels (i--k) display the corresponding final pressure and flux fields predicted by each initialization strategy.}
	\label{fig:wheatstone_exp_adm_init_linear}
\end{figure}

The exact solution reaches at most $\mathrm{Re} \approx 1060$ across the five edges, so the network remains strictly laminar, and the Brute Force enumeration returns the global optimum $\mathcal{J}_{\text{bf}} = 1.52 \times 10^{-3}$ as the reference. Against this reference, the ADM shows strong sensitivity to the initial guess (Fig.~\ref{fig:wheatstone_exp_adm_init_linear}(b)). The CMR initialization attains the optimum on the very first iteration while preserving thermodynamic admissibility throughout. In contrast, the Null-space and Random initializations violate the second law through Iterations 0 to 2 and 0 to 4, respectively. They subsequently stagnate well above the global optimum (Table~\ref{tab:adm_initialization_results}). The DA successfully reaches the optimum by the end of its cooling schedule.

The phase-space panels of Fig.~\ref{fig:wheatstone_exp_adm_init_linear}(c)--(h) interpret these profiles geometrically, with the datum selected by each solver listed in Table~\ref{tab:adm_selected_data}. The CMR initial state already lies on the dataset. Therefore, the nearest-datum projection assigns the optimal point on all five edges simultaneously, making the final state identical. The DA assignment also coincides with this optimal state. The Null-space initialization starts near the dataset but strictly off it, while the Random initialization scatters far away. Both are eventually pulled onto the dataset, but they settle on a different assignment. Specifically, the high-flux Edge 2 is matched to the neighboring datum (index 24 instead of 25), which explains their higher objective values. The Random run mis-selects an additional datum on Edge 1, leading to correspondingly worse performance.

This mis-selection on Edge 2, shared by both uninformed initializations, is then amplified by the network physics. The resulting error on Edge 2 itself is minor (approximately $5\%$). However, mass conservation transfers this imbalance to the low-flux Edge 3 ($\mathrm{Re}\approx 43$). Its flux collapses from $0.060$ to $0.009$, representing an $85\%$ drop. This is the largest deviation in the flux block of Table~\ref{tab:adm_initialization_results}. Furthermore, its gradient substantially deviates on an edge that otherwise retained its optimal datum (Table~\ref{tab:adm_selected_data}).

Table~\ref{tab:adm_initialization_results} also includes the exact solution of the classical problem. The dataset samples this problem but does not contain the exact solution itself. This comparison measures the inherent cost of using a finite dataset. The Brute Force optimum is mathematically unimprovable, yet it still departs from the exact solution by $0.64\%$ in nodal pressure, $0.90\%$ in gradient, and $3.46\%$ in flux. The deviation again falls on Edge 3, but for a different reason. The assignment is correct, but the dataset simply lacks a point near its operating flux. Consequently, the nearest available datum lies $21\%$ above the exact value, and the recovered field lies $41\%$ above it. This limitation is intrinsic to the finite dataset and not a failure of the solvers.

\begin{table}[h!]
	\centering
	\caption{Constitutive data point selected by each solver on every edge of the laminar
		\emph{Wheatstone bridge}. The index $n$ labels the shared set of $\mathcal{N}_m = 30$
		data pairs, ordered by increasing flux; $(g_{n^\star}, q_{n^\star})$ are the coordinates of
		the globally-optimal datum. Indices in red differ
		from the Brute Force optimum $n^\star$.}
	\label{tab:adm_selected_data}
	\vspace{0.2cm}
	\resizebox{0.75\textwidth}{!}{%
		\begin{tabular}{@{}lcccccc@{}}
			\toprule
			\textbf{Edge} & $n^\star$ (BF) & $(g_{n^\star},\,q_{n^\star})$ & CMR & Null-space & Random & DA \\
			\midrule
			0 \ ($0\!\to\!1$) & 22 & $(-0.762,\ 0.762)$ & 22 & 22 & 22 & 22 \\
			1 \ ($1\!\to\!2$) & 17 & $(-0.254,\ 0.254)$ & 17 & 17 & \errval{18} & 17 \\
			2 \ ($3\!\to\!2$) & 25 & $(-1.067,\ 1.067)$ & 25 & \errval{24} & \errval{24} & 25 \\
			3 \ ($0\!\to\!3$) & 15 & $(-0.051,\ 0.051)$ & 15 & 15 & 15 & 15 \\
			4 \ ($0\!\to\!2$) & 22 & $(-0.762,\ 0.762)$ & 22 & 22 & 22 & 22 \\
			\bottomrule
	\end{tabular}}
\end{table}

\begin{table}[h!]
	\centering
	\caption{Comparison of the global optimum (Brute Force) against the ADM and DA approximations for the \emph{Wheatstone bridge} network. Values highlighted in red indicate a relative absolute error greater than $10\%$ with respect to the global optimum. The \emph{Exact} column is the solution of the classical hydraulic problem, which the dataset samples but does not contain; no objective is associated with it, since it is not an assignment of data points.}
	\label{tab:adm_initialization_results}
	\vspace{0.2cm}
	\resizebox{0.85\textwidth}{!}{
		\begin{tabular}{@{}l >{\columncolor{exactshade}}c ccccc@{}}
			\toprule
			\textbf{Node} & \exhead{Exact} & \textbf{Brute Force} & \textbf{CMR} & \textbf{Null-space} & \textbf{Random} & \textbf{DA} \\
			\midrule
			0 & \exval{$1.00000$} & $1.00000$ & $1.00000$ & $1.00000$ & $1.00000$ & $1.00000$ \\
			1 & \exval{$0.26952$} & $0.26545$ & $0.26545$ & $0.26545$ & \errval{$0.31236$} & $0.26545$ \\
			2 & \exval{$0.00000$} & $0.00000$ & $0.00000$ & $0.00000$ & $0.00000$ & $0.00000$ \\
			3 & \exval{$0.96096$} & $0.96909$ & $0.96909$ & $0.92218$ & $0.92218$ & $0.96909$ \\
			\midrule
			\textbf{Objective} $\mathcal{J}$ & \exval{--} & $1.52\times 10^{-3}$ & $1.52\times 10^{-3}$ & $4.03\times 10^{-3}$ & $5.34\times 10^{-3}$ & $1.52\times 10^{-3}$ \\
			\bottomrule
		\end{tabular}
	}

	\vspace{0.35cm}

	\resizebox{0.85\textwidth}{!}{
		\begin{tabular}{@{}l >{\columncolor{exactshade}}c ccccc@{}}
			\toprule
			\textbf{Edge} & \exhead{Exact} & \textbf{Brute Force} & \textbf{CMR} & \textbf{Null-space} & \textbf{Random} & \textbf{DA} \\
			\midrule
			\multicolumn{7}{@{}l}{\textit{Gradient} ($g$)} \\
			0 \ ($0 \!\to\! 1$) & \exval{$-0.79100$} & $-0.79540$ & $-0.79540$ & $-0.79540$ & $-0.74460$ & $-0.79540$ \\
			1 \ ($1 \!\to\! 2$) & \exval{$-0.29185$} & $-0.28745$ & $-0.28745$ & $-0.28745$ & \errval{$-0.33824$} & $-0.28745$ \\
			2 \ ($3 \!\to\! 2$) & \exval{$-1.04057$} & $-1.04937$ & $-1.04937$ & $-0.99858$ & $-0.99858$ & $-1.04937$ \\
			3 \ ($0 \!\to\! 3$) & \exval{$-0.04227$} & $-0.03347$ & $-0.03347$ & \errval{$-0.08426$} & \errval{$-0.08426$} & $-0.03347$ \\
			4 \ ($0 \!\to\! 2$) & \exval{$-0.76569$} & $-0.76569$ & $-0.76569$ & $-0.76569$ & $-0.76569$ & $-0.76569$ \\
			\addlinespace[0.15cm]
			\multicolumn{7}{@{}l}{\textit{Flux} ($q$)} \\
			0 \ ($0 \!\to\! 1$) & \exval{$0.79100$} & $0.75753$ & $0.75753$ & $0.75753$ & $0.80832$ & $0.75753$ \\
			1 \ ($1 \!\to\! 2$) & \exval{$0.29185$} & $0.25838$ & $0.25838$ & $0.25838$ & \errval{$0.30917$} & $0.25838$ \\
			2 \ ($3 \!\to\! 2$) & \exval{$1.04057$} & $1.05790$ & $1.05790$ & $1.00710$ & $1.00710$ & $1.05790$ \\
			3 \ ($0 \!\to\! 3$) & \exval{$0.04227$} & $0.05960$ & $0.05960$ & \errval{$0.00880$} & \errval{$0.00880$} & $0.05960$ \\
			4 \ ($0 \!\to\! 2$) & \exval{$0.76569$} & $0.76193$ & $0.76193$ & $0.76193$ & $0.76193$ & $0.76193$ \\
			\bottomrule
		\end{tabular}
	}
\end{table}

Finally, Fig.~\ref{fig:wheatstone_exp_da_linear} shows the annealing at work: the effective number of active data points per edge, $N_{\mathrm{eff}} = \exp\mathcal{H}$, decreases monotonically from $\mathcal{N}_m$ to $1$ as the system is cooled, and the per-edge weight trajectories show the probability mass, initially spread over the whole dataset, concentrating onto a single datum at the coldest temperature.

\begin{figure}[h!]
	\centering
	\svgfig{0.925\columnwidth}{551.91793266}{wheatstone_exp_da_linear}
	\caption{Mechanistic evolution of the DA solver on the noise-free laminar \emph{Wheatstone bridge} ($\mathcal{N}_m = 30$). (a) Active data points per edge, $N_{\mathrm{eff}} = \exp\mathcal{H}$, versus inverse temperature $\beta$, showing the contraction from uniform assignment ($\beta \to 0$) to a single datum ($\beta \to \infty$). (b) Final membership weights $w_n$ at $\beta \approx 805$, illustrating data localization. (c--g) Weight evolution $w_n(\beta)$ for edges 0--4, respectively, where the diffuse probability mass concentrates onto the target datum (horizontal dotted lines) as $\beta$ increases.}
	\label{fig:wheatstone_exp_da_linear}
\end{figure}

\paragraph{$\bullet$ Influence of dataset resolution.}

We now refine the dataset from $\mathcal{N}_m = 20$ to $160$, leaving the network and the boundary conditions unchanged (Fig.~\ref{fig:wheatstone_exp_adm_iter_linear}).

\begin{figure}[h!]
	\centering
	\svgfig{1.0\columnwidth}{579.13890293}{wheatstone_exp_adm_iter_linear}
	\caption{Influence of dataset resolution on the ADM and DA convergence for the \emph{Wheatstone bridge} topology. Evolution of the objective function $\mathcal{J}$ over 50 iterations for (a) $\mathcal{N}_m = 20$, (b) $\mathcal{N}_m = 40$, (c) $\mathcal{N}_m = 80$, and (d) $\mathcal{N}_m = 160$. The solid orange horizontal line marks the Brute Force global optimum baseline. Cross markers ($\times$) indicate iterations where the solution violates thermodynamic admissibility constraints.}
	\label{fig:wheatstone_exp_adm_iter_linear}
\end{figure}

As expected, the final ADM objective decreases with resolution for every initialization. A denser dataset represents the constitutive manifold more accurately and consequently lowers the projection error. This effect is largest where there is the most room for improvement. For instance, the Random initialization improves by nearly two orders of magnitude across the sweep, and the Null-space initialization approaches the global optimum by $\mathcal{N}_m = 80$. However, their ordering is not fixed. The gap between the two narrows with resolution and occasionally reverses (Fig.~\ref{fig:wheatstone_exp_adm_iter_linear}(b)). This suggests that a denser manifold offers the ADM more routes out of a local minimum. However, the number of iterations needed to restore thermodynamic admissibility (indicated by the cross markers) remains unchanged. This is unsurprising because the constraint is never strictly imposed during the iterations (Remark~\ref{rem:thermo_consistency}), meaning that refining the dataset provides no mechanism to enforce it.

In contrast, the CMR initialization and the DA are insensitive to resolution, as both recover the global optimum at every density. The DA nevertheless requires a colder terminal temperature $\beta_{\max}$ as the dataset is refined, which Proposition~\ref{prop:dda_limits}(ii) accurately predicts. The transition from a soft to a hard assignment is governed by the gap $d_{(2)}^2 - d_{(1)}^2$ between the two closest candidates. This gap shrinks as the dataset is packed more densely, so a lower temperature is needed to isolate the nearest neighbor rather than average two nearly equidistant points.

\begin{figure}[h!]
	\centering
	\svgfig{1.0\columnwidth}{486.12540756}{wheatstone_da_cooling}
	\caption{Annealing of the DA solver on the laminar \emph{Wheatstone bridge}.
		(a) Ratio $\mathcal{J}_{\text{da}}/\mathcal{J}_{\text{bf}}$ versus terminal temperature
		$\beta_{\max}$ (50-iteration budget) for four densities $\mathcal{N}_m$: each curve drops
		from a stagnation plateau onto the optimum (dashed line) at a threshold $\beta_{\max}$ that
		grows with density. (b) Convergence histories for $\mathcal{N}_m = 90$ and $\beta_{\max}=10^5$,
		sweeping the cooling duration ($J_{\mathrm{cool}}$ iterations from $\beta_0$ to $\beta_{\max}$):
		all schedules reach the optimum except the near-instantaneous quench, showing that the cooling
		\emph{rate} is immaterial in the laminar regime.}
	\label{fig:wheatstone_da_cooling}
\end{figure}

Figure~\ref{fig:wheatstone_da_cooling}(a) confirms this behavior. Each curve sits on a plateau at $\mathcal{J}_{\text{da}} \approx 2\,\mathcal{J}_{\text{bf}}$ and drops onto the optimum only after passing a critical $\beta_{\max}$. This threshold rises from approximately $3\times10^{2}$ at $\mathcal{N}_m = 20$ to roughly $3\times10^{4}$ at $\mathcal{N}_m = 160$. The value $\beta_{\max} = 10^{5}$ used in this parameter sweep lies safely beyond every threshold, and the $\beta_{\max} = 10^{3}$ used in the prior comparative study safely clears the threshold at its specific density of $\mathcal{N}_m = 30$.

The cooling \emph{rate}, by contrast, barely matters here. Fixing $\mathcal{N}_m = 90$ and stretching the schedule from a near-instantaneous quench to a slow anneal, the DA recovers the optimum in almost every case, remaining close to it with as few as ten cooling iterations (Fig.~\ref{fig:wheatstone_da_cooling}(b)). This rate-independence is a property of the convex laminar manifold and does not survive into the turbulent regime of Sec.~\ref{sec:turb_adm_da_baseline}.

\paragraph{$\bullet$ Influence of measurement noise.}

So far the datasets have been noise-free, which isolated the effects of dataset resolution and initialization. Experimental measurements, however, always carry uncertainty and acquisition errors, so we now examine how the solvers behave when the constitutive data are noisy.

Let $\mathcal{D} = \{(g_n, q_n)\}_{n=1}^{\mathcal{N}_m}$ be the reference, noise-free dataset. A perturbed dataset $\mathcal{D}^{\xi}$ is built by adding independent noise to both the pressure-gradient and the flux components:
\begin{equation}
	\mathcal{D}^{\xi} = \left\{(g_n^{\xi}, q_n^{\xi})\right\}_{n=1}^{\mathcal{N}_m}, 
	\quad \text{with} \quad
	\left\{
	\begin{aligned}
		g_n^{\xi} &= g_n + \xi_{n}^{g}, \\
		q_n^{\xi} &= q_n + \xi_{n}^{q},
	\end{aligned}
	\right.
\end{equation}
where $\xi_{n}^{g}$ and $\xi_{n}^{q}$ are random perturbations. Two noise models are considered:
\begin{itemize}
	\item[$\clubsuit$] \textbf{Gaussian noise:} the perturbations are sampled from a normal distribution with zero mean and standard deviation $\eta$, i.e., $\xi_{n}^{g}, \xi_{n}^{q} \sim \mathcal{N}(0, \eta^2)$.
	\item[$\clubsuit$] \textbf{Uniform noise:} the perturbations are drawn from a zero-mean uniform distribution, $\xi_{n}^{g}, \xi_{n}^{q} \sim \mathcal{U}(-\sqrt{3}\eta, \sqrt{3}\eta)$.
\end{itemize}

\begin{remark}
	This choice of support gives the uniform noise the same variance as its Gaussian counterpart. The two models can therefore be compared at equal noise energy, differing only in the shape of the distribution.
\end{remark}

To assess robustness, the dataset size $\mathcal{N}_m$ is increased while the noise level $\eta$ is reduced, so that resolution and fidelity improve together, with $100$ independent realizations per pair $(\eta, \mathcal{N}_m)$. Panels (c)--(f) of Figs.~\ref{fig:wheatstone_conv_linear_gaussian} and \ref{fig:wheatstone_conv_linear_uniform} illustrate the resulting datasets.

We separate the two questions a noisy dataset raises: how accurate a solver is when it returns a solution, and how often it returns an admissible one at all. The former is reported by the convergence trends and ECDFs of Figs.~\ref{fig:wheatstone_conv_linear_gaussian} and \ref{fig:wheatstone_conv_linear_uniform}; the latter, separately, by the inadmissibility bar chart of Fig.~\ref{fig:wheatstone_noise_admissibility}. Throughout, a realization is counted as inadmissible when the \emph{final} iterate of the algorithm violates~\eqref{eq:clausius_duhem}, regardless of whether earlier iterates satisfied it.

Figures~\ref{fig:wheatstone_conv_linear_gaussian}(a) and \ref{fig:wheatstone_conv_linear_uniform}(a) report the median objective $\mathcal{J}$ along with its interquartile range. Because the solvers fail at different rates, these statistics are computed solely on the realizations where every method successfully satisfies the second law. This ensures all curves are compared on identical data. As data quality improves from $(\eta, \mathcal{N}_m) = (0.05, 20)$ to $(0.00625, 160)$, the median objective falls monotonically for every solver. A more accurate manifold yields lower projection errors. The Brute Force method, which enumerates every candidate assignment, provides a per-realization lower bound, and its median acts as the floor of each panel. The DA and ADM-CMR track that floor closely, with the DA being essentially coincident with it. Conversely, the Null-space and Random initializations sit well above this optimal bound. Notably, both noise distributions yield essentially the same trends.

Figures~\ref{fig:wheatstone_conv_linear_gaussian}(b) and \ref{fig:wheatstone_conv_linear_uniform}(b) assess the same realizations distributionally through empirical cumulative distribution functions (ECDFs) aggregated across all configurations. Curves shifted to the left indicate superior performance. The Brute Force reference stochastically dominates every iterative solver. The DA curve lies essentially on top of it, occupying the same low-objective region with a comparably steep transition. The ADM-CMR curve sits marginally to its right, proving that the initialization-free DA reaches the global optimum just as reliably as the best-initialized ADM. The Null-space and Random initializations shift significantly to the right, confirming their tendency to settle into higher-objective local minima or to fail outright when working with coarse data.

\begin{figure}[h!]
	\centering
	\svgfig{1.0\columnwidth}{488.11641144}{wheatstone_conv_linear_gaussian}
	\caption{Statistical performance comparison of solver approaches for the \emph{Wheatstone} network under Gaussian noise perturbations. (a) Median objective function value $\mathcal{J}$ with interquartile range, computed over the realizations solved admissibly by every method, illustrating convergence behavior as data quality improves. (b) Empirical cumulative distribution functions (ECDFs) of $\mathcal{J}$ over the same realizations, aggregated across all configurations. (c)--(f) Representative synthetic Gaussian datasets, showing the progression from sparse, high-noise measurements to dense, low-noise representations.}
	\label{fig:wheatstone_conv_linear_gaussian}
\end{figure}

\begin{figure}[h!]
	\centering
	\svgfig{1.0\columnwidth}{483.00476025}{wheatstone_conv_linear_uniform}
	\caption{Statistical performance comparison of solver approaches for the \emph{Wheatstone} network under Uniform noise perturbations. (a) Median objective function value $\mathcal{J}$ with interquartile range, computed over the realizations solved admissibly by every method. (b) Empirical cumulative distribution functions (ECDFs) of $\mathcal{J}$ over the same realizations, aggregated across all configurations. (c)--(f) Representative synthetic Uniform datasets, showing the progression from sparse, high-noise measurements to dense, low-noise representations.}
	\label{fig:wheatstone_conv_linear_uniform}
\end{figure}

We now evaluate robustness by examining how often each solver returns a thermodynamically admissible solution (Fig.~\ref{fig:wheatstone_noise_admissibility}). Consistent with the noise-free findings, the reliability of the ADM depends heavily on its initialization. The Null-space and Random strategies are the least reliable, proving inadmissible in up to roughly $51\%$ and $54\%$ of the coarsest, noisiest realizations ($\mathcal{N}_m = 20$). Even where they do converge, they settle into higher-objective local minima with significant variability. The ADM-CMR is by far the most robust, landing near-optimal on almost every realization, with only about $12$ to $16\%$ inadmissible at the coarsest setting.

The DA solver starts from the initialization-independent data centroid and is second only to the ADM-CMR. Its final solution is inadmissible in approximately $18\%$ of the coarsest, noisiest realizations ($\mathcal{N}_m = 20$). However, on the realizations it solves admissibly, its median objective and interquartile range sit essentially on the Brute-Force lower bound across the entire $(\eta, \mathcal{N}_m)$ sequence. This matches, and marginally improves on, the ADM-CMR performance, importantly without requiring the construction of any constitutive surrogate.

\begin{figure}[h!]
	\centering
	\svgfig{1.0\columnwidth}{562.68931021}{wheatstone_noise_admissibility}
	\caption{Fraction of the 100 stochastic realizations in which each solver's final solution is thermodynamically inadmissible, as a function of the dataset size $\mathcal{N}_m$, under (a) Gaussian and (b) Uniform noise. At $\mathcal{N}_m = 20$ the Null-space and Random initializations are inadmissible in up to $\approx 51\%$ and $\approx 54\%$ of the realizations, the initialization-free DA in $\approx 18\%$, and the ADM-CMR in only $12$--$16\%$; all solvers become reliable as the data is refined, with ADM-CMR the most robust and DA a close second throughout.}
	\label{fig:wheatstone_noise_admissibility}
\end{figure}

These analyses, however, are confined to laminar flows where the constitutive manifold is linear. We now drop that assumption and evaluate the framework on nonlinear constitutive behavior.

\subsubsection{Nonlinear Constitutive Manifolds}\label{sec:turbulent_fluid_flow}\label{sec:turb_adm_da_baseline}

We now repeat the study of Sec.~\ref{sec:adm_da_baseline} under non-convex conditions. The solvers, the three initialization strategies, and the Brute Force reference are exactly the same. Only the underlying flow regime changes.

\paragraph{$\bullet$ Comparative algorithmic performance and initialization dependence.}

The network topology, the fluid, and the geometry remain identical to the laminar study (Fig.~\ref{fig:wheatstone_topology} and Table~\ref{tab:network_parameters}). First, the boundary conditions are raised to drive the network into the turbulent regime. The prescribed pressures at nodes 0 and 2 become $105.3~\mathrm{kPa}$ and $101.3~\mathrm{kPa}$, while the nodal fluxes at nodes 1 and 3 become $0.0005~\mathrm{m^3/s}$ and $-0.00025~\mathrm{m^3/s}$. This causes the two nodes to exchange the source and sink roles they carry in Table~\ref{tab:network_parameters}. Second, the dataset is sampled up to $\mathrm{Re}_{\max} = 4\times10^4$ with $\mathcal{N}_{m} = 90$ points. This range covers the S-shaped, non-convex transition seen in Fig.~\ref{fig:wheatstone_exp_adm_init_nonlinear}(c)--(h). It is again noise-free, isolating non-convexity as the only factor under study. Third, the CMR uses the Hermite surrogate of Sec.~\ref{sec:cmr}(b) rather than the linear fit. Furthermore, both iterative solvers receive a 500-iteration budget, which is ten times the laminar one. The DA needs this extended budget. Only a slow enough schedule allows the iterates to pass the S-shaped region before the assignment weights sharpen. The DA cools from $\beta_0 = 0.02$ to $\beta_{\max} = 10^3$, a choice justified \emph{a posteriori} in Fig.~\ref{fig:wheatstone_da_cooling_turb}.

\begin{figure}[h!]
	\centering
	\svgfig{1.0\columnwidth}{541.60664236}{wheatstone_exp_adm_init_nonlinear}
	\caption{Performance and convergence of the ADM and DA solvers on the \emph{Wheatstone bridge} under turbulent flow. Panel layout as in Fig.~\ref{fig:wheatstone_exp_adm_init_linear}: (a) Brute Force global optimum (nodal pressure colormap, flux arrows); (b) objective $\mathcal{J}$ across iterations, cross markers ($\times$) flagging second-law violations; (c--h) phase-space states at the first (c--e) and final (f--h) ADM iterations for the CMR, Null-space, and Random initializations; (i--k) final pressure and flux fields per initialization.}
	\label{fig:wheatstone_exp_adm_init_nonlinear}
\end{figure}

The Brute Force optimum is $\mathcal{J}_{\text{bf}} = 1.54 \times 10^{-3}$ (Fig.~\ref{fig:wheatstone_exp_adm_init_nonlinear}(a)). The ADM is again highly sensitive to the initial guess. Here, it is much more so, since the S-shaped geometry creates several basins that easily trap the iteration. The CMR initialization avoids them, reaching the optimum on the first iteration with every iterate thermodynamically admissible. The Null-space and Random initializations both fail. Their first iterate violates the second law. The alternating projection then restores admissibility, but the iteration stalls two orders of magnitude above the optimum (Table~\ref{tab:turb_adm_initialization_results}). The DA, starting from the plain data centroid, escapes these traps. The annealing process carries the iterates past the non-convexity and recovers $\mathcal{J}_{\text{bf}}$ exactly (Fig.~\ref{fig:wheatstone_exp_adm_init_nonlinear}(b)).

The phase-space panels of Fig.~\ref{fig:wheatstone_exp_adm_init_nonlinear}(c)--(h) interpret these profiles geometrically. Table~\ref{tab:turb_adm_selected_data} lists the datum selected by each solver. The CMR guess already lies on the true solution, so its final state is identical to the initial one. The DA assignment also coincides with it, and both select the optimal datum on all five edges. The Null-space initialization pushes the states toward the origin due to its minimum-norm bias. The Random initialization scatters them far from the constitutive curve. Both settle on a different assignment, mis-assigning the four loaded edges (Edges 0 through 3) and recovering only Edge 4.

That pattern is not accidental. Each loaded edge touches a node with a free pressure, specifically node 1 or node 3. This is exactly the pressure that the stalled ADM gets wrong. The gradient of the incident edge drifts, and with it the nearest datum, while mass conservation still keeps the fluxes within a few percent of the optimum. Edge 4 bridges the two pressure-prescribed nodes, meaning its gradient is fixed entirely by the Dirichlet data. It cannot drift, and every method recovers its datum exactly. Table~\ref{tab:turb_adm_initialization_results} shows the same split in the recovered fields, observing deviations above $10$\% on the nodal pressures and the edge gradients, but none on the fluxes.

Table~\ref{tab:turb_adm_initialization_results} also reports the exact solution of the classical nonlinear problem, obtained by Picard iteration on the Churchill closure. It confirms that the Brute Force optimum stands in faithfully for the physical behavior here. The two differ by $0.77\%$, $0.83\%$, and $0.65\%$ in pressure, gradient, and flux. Furthermore, no individual edge exceeds $2\%$. The agreement is sharper than in the laminar experiment. The reason lies in the sampling rather than the closure. The five edges operate over $\mathrm{Re} \approx 1.1$ to $3.4\times10^{4}$, representing a spread of barely a factor of three. Therefore, a single $\mathcal{N}_m = 90$ dataset resolves every operating point equally well.

\begin{table}[h!]
	\centering
	\caption{Constitutive data point selected by each solver on every edge of the turbulent
		\emph{Wheatstone bridge} ($\mathcal{N}_m = 90$). The index $n$ labels the shared set of data pairs, ordered
		by increasing flux; $(g_{n^\star}, q_{n^\star})$ are the coordinates of the globally-optimal datum.
		Indices in red differ from the Brute Force optimum $n^\star$.}
	\label{tab:turb_adm_selected_data}
	\vspace{0.2cm}
	\resizebox{0.75\textwidth}{!}{%
		\begin{tabular}{@{}lcccccc@{}}
			\toprule
			\textbf{Edge} & $n^\star$ (BF) & $(g_{n^\star},\,q_{n^\star})$ & CMR & Null-space & Random & DA \\
			\midrule
			0 \ ($0\!\to\!1$) & 11 & $(+4.543,\ -1.865)$ & 11 & \errval{19} & \errval{19} & 11 \\
			1 \ ($1\!\to\!2$) & 82 & $(-5.588,\ +2.088)$ & 82 & \errval{75} & \errval{75} & 82 \\
			2 \ ($3\!\to\!2$) & 31 & $(+0.883,\ -0.752)$ & 31 & \errval{33} & \errval{27} & 31 \\
			3 \ ($0\!\to\!3$) & 66 & $(-2.029,\ +1.197)$ & 66 & \errval{64} & \errval{69} & 66 \\
			4 \ ($0\!\to\!2$) & 57 & $(-0.770,\ +0.696)$ & 57 & 57 & 57 & 57 \\
			\bottomrule
	\end{tabular}}
\end{table}

\begin{table}[h!]
	\centering
	\caption{Comparison of the global optimum (Brute Force) against the ADM and DA approximations for the \emph{turbulent} \emph{Wheatstone bridge} network. Values highlighted in red indicate a relative absolute error greater than $10\%$ with respect to the global optimum. The \emph{Exact} column is the solution of the classical nonlinear problem with the Churchill closure, obtained by Picard iteration; no objective is associated with it, since it is not an assignment of data points.}
	\label{tab:turb_adm_initialization_results}
	\vspace{0.2cm}
	\resizebox{0.85\textwidth}{!}{%
		\begin{tabular}{@{}l >{\columncolor{exactshade}}c ccccc@{}}
			\toprule
			\textbf{Node} & \exhead{Exact} & \textbf{Brute Force} & \textbf{CMR} & \textbf{Null-space} & \textbf{Random} & \textbf{DA} \\
			\midrule
			0 & \exval{$1.00000$}  & $1.00000$  & $1.00000$  & $1.00000$                   & $1.00000$                   & $1.00000$ \\
			1 & \exval{$5.14082$}  & $5.17810$  & $5.17810$  & \errval{$3.54265$}  & \errval{$3.54265$}  & $5.17810$ \\
			2 & \exval{$0.00000$}  & $0.00000$  & $0.00000$  & $0.00000$                   & $0.00000$                   & $0.00000$ \\
			3 & \exval{$-0.86102$} & $-0.84449$ & $-0.84449$ & \errval{$-0.59283$} & \errval{$-1.33362$} & $-0.84451$ \\
			\midrule
			\textbf{Objective} $\mathcal{J}$ & \exval{--} & $1.54{\times}10^{-3}$ & $1.54{\times}10^{-3}$ & $1.82{\times}10^{-1}$ & $2.03{\times}10^{-1}$ & $1.54{\times}10^{-3}$ \\
			\bottomrule
		\end{tabular}
	}

	\vspace{0.35cm}

	\resizebox{0.85\textwidth}{!}{
		\begin{tabular}{@{}l >{\columncolor{exactshade}}c ccccc@{}}
			\toprule
			\textbf{Edge} & \exhead{Exact} & \textbf{Brute Force} & \textbf{CMR} & \textbf{Null-space} & \textbf{Random} & \textbf{DA} \\
			\midrule
			\multicolumn{7}{@{}l}{\textit{Gradient} ($g$)} \\
			0 \ ($0 \!\to\! 1$) & \exval{$4.48386$}  & $4.52423$  & $4.52423$  & \errval{$2.75329$}  & \errval{$2.75329$}  & $4.52423$ \\
			1 \ ($1 \!\to\! 2$) & \exval{$-5.56670$} & $-5.60707$ & $-5.60707$ & \errval{$-3.83613$} & \errval{$-3.83613$} & $-5.60707$ \\
			2 \ ($3 \!\to\! 2$) & \exval{$0.93235$}  & $0.91445$  & $0.91445$  & \errval{$0.64194$}  & \errval{$1.44410$}  & $0.91447$ \\
			3 \ ($0 \!\to\! 3$) & \exval{$-2.01520$} & $-1.99730$ & $-1.99730$ & \errval{$-1.72478$} & \errval{$-2.52694$} & $-1.99731$ \\
			4 \ ($0 \!\to\! 2$) & \exval{$-0.76569$} & $-0.76569$ & $-0.76569$ & $-0.76569$                  & $-0.76569$                  & $-0.76569$ \\
			\addlinespace[0.15cm]
			\multicolumn{7}{@{}l}{\textit{Flux} ($q$)} \\
			0 \ ($0 \!\to\! 1$) & \exval{$-1.85173$} & $-1.85620$ & $-1.85620$ & $-1.82837$ & $-1.82837$ & $-1.85620$ \\
			1 \ ($1 \!\to\! 2$) & \exval{$2.08337$}  & $2.07890$  & $2.07890$  & $2.10673$  & $2.10673$  & $2.07890$ \\
			2 \ ($3 \!\to\! 2$) & \exval{$-0.77511$} & $-0.76108$ & $-0.76108$ & $-0.76108$ & $-0.78892$ & $-0.76109$ \\
			3 \ ($0 \!\to\! 3$) & \exval{$1.19244$}  & $1.20647$  & $1.20647$  & $1.20647$  & $1.17863$  & $1.20646$ \\
			4 \ ($0 \!\to\! 2$) & \exval{$0.69359$}  & $0.69591$  & $0.69591$  & $0.69591$  & $0.69591$  & $0.69591$ \\
			\bottomrule
		\end{tabular}
	}
\end{table}

Finally, Fig.~\ref{fig:wheatstone_exp_da_nonlinear} reports the annealing mechanism for the turbulent dataset. It mirrors the mechanism already described in the laminar case (cf.~Fig.~\ref{fig:wheatstone_exp_da_linear}), and both are interpreted in the exact same manner.

\begin{figure}[h!]
	\centering
	\svgfig{1.0\columnwidth}{559.53774198}{wheatstone_exp_da_nonlinear}
	\caption{Mechanistic evolution of the DA solver on the noise-free turbulent \emph{Wheatstone bridge} ($\mathcal{N}_m = 90$). (a) Active data points per edge, $N_{\mathrm{eff}} = \exp\mathcal{H}$, versus inverse temperature $\beta$, showing the contraction from uniform assignment ($\beta \to 0$) to a single datum ($\beta \to \infty$). (b) Final membership weights $w_n$ at $\beta \approx 979$, illustrating data localization. (c--g) Weight evolution $w_n(\beta)$ for edges 0--4, respectively, where the diffuse probability mass concentrates onto the target datum (horizontal dotted lines) as $\beta$ increases.}
	\label{fig:wheatstone_exp_da_nonlinear}
\end{figure}

It remains to justify the terminal temperature and the slow schedule used above, which Fig.~\ref{fig:wheatstone_da_cooling_turb} addresses sequentially. Panel (a) sweeps $\beta_{\max}$ at a fixed slow cooling rate for four dataset densities $\mathcal{N}_m \in \{20, 40, 80, 160\}$. Every curve descends from a warm-temperature plateau onto the global optimum (dashed line) at a threshold that grows monotonically with the dataset density. The densities bracketing the $\mathcal{N}_m = 90$ value used above reach this optimum by $\beta_{\max} \sim 10^3$. This indicates that the terminal temperature of the main runs is cold enough to resolve the nearest datum on every edge.

Panel (b) addresses the cooling rate. The terminal temperature is now fixed colder than required ($\beta_{\max} = 10^5$) with a generous 500-iteration budget, ensuring that stagnation can never be blamed on the terminal temperature itself. The runs differ only in the number of iterations $J_{\mathrm{cool}}$ used to raise $\beta$ from $\beta_0 = 0.02$ to $\beta_{\max}$, after which $\beta$ is held fixed. The slow schedule ($J_{\mathrm{cool}} = 500$) successfully descends all the way to the global optimum. Faster schedules stagnate at progressively higher plateaus, specifically: $\mathcal{J}_{\text{da}} \approx 1.36 \times 10^{-2}$ at $J_{\mathrm{cool}} = 150$, $6.40 \times 10^{-2}$ at $50$, and $1.82 \times 10^{-1}$ at the fastest quench, $J_{\mathrm{cool}} = 15$.

\begin{figure}[h!]
	\centering
	\svgfig{1.0\columnwidth}{542.01370224}{wheatstone_da_cooling_turb}
	\caption{Annealing of the DA solver on the turbulent \emph{Wheatstone bridge}.
		(a) Ratio $\mathcal{J}_{\text{da}}/\mathcal{J}_{\text{bf}}$ versus terminal temperature
		$\beta_{\max}$ (at a fixed slow cooling rate) for four densities $\mathcal{N}_m$: each curve
		drops from a stagnation plateau onto the optimum (dashed line) at a threshold $\beta_{\max}$
		that grows with density. (b) Convergence histories for $\mathcal{N}_m = 90$ and
		$\beta_{\max}=10^5$, sweeping the cooling duration ($J_{\mathrm{cool}}$ iterations from $\beta_0$
		to $\beta_{\max}$): only the slow schedule ($J_{\mathrm{cool}} = 500$, dark violet) reaches the
		optimum, while the faster quenches (grey, $J_{\mathrm{cool}} = 150, 50, 15$) freeze onto
		progressively higher local minima, showing that the cooling \emph{rate} is decisive in the
		turbulent regime.}
	\label{fig:wheatstone_da_cooling_turb}
\end{figure}

\paragraph{$\bullet$ Influence of measurement noise.}

We now repeat the noise study of Sec.~\ref{sec:laminar_fluid_flow} on the turbulent manifold. The procedure uses the same $100$ realizations per pair $(\eta, \mathcal{N}_m)$. Every statistic is again computed over the realizations solved admissibly by all methods, with the Brute Force serving as the per-realization lower bound. The aggregated results are summarized in Figs.~\ref{fig:wheatstone_conv_nonlinear_gaussian} and \ref{fig:wheatstone_conv_nonlinear_uniform} for Gaussian and Uniform noise, respectively.

\begin{figure}[h!]
	\centering
	\svgfig{1.0\columnwidth}{481.23544222}{wheatstone_conv_nonlinear_gaussian}
	\caption{Statistical performance comparison of solver approaches for the \emph{Wheatstone bridge} network under turbulent flow conditions and Gaussian noise perturbations. (a) Median objective function value $\mathcal{J}$ with interquartile range, computed over the realizations solved admissibly by every method, as data quality improves. (b) Empirical cumulative distribution functions (ECDFs) of $\mathcal{J}$ over the same realizations, aggregated across all problem configurations. (c)--(f) Representative examples of the synthetic turbulent datasets at increasing data quality: (c) $\eta = 0.05$, $\mathcal{N}_m = 20$; (d) $\eta = 0.025$, $\mathcal{N}_m = 40$; (e) $\eta = 0.0125$, $\mathcal{N}_m = 80$; (f) $\eta = 0.00625$, $\mathcal{N}_m = 160$.}
	\label{fig:wheatstone_conv_nonlinear_gaussian}
\end{figure}

\begin{figure}[h!]
	\centering
	\svgfig{1.0\columnwidth}{476.77943388}{wheatstone_conv_nonlinear_uniform}
	\caption{Statistical performance comparison of solver approaches for the \emph{Wheatstone bridge} network under turbulent flow conditions and Uniform noise perturbations. (a) Median objective function value $\mathcal{J}$ with interquartile range, computed over the realizations solved admissibly by every method, as data quality improves. (b) Empirical cumulative distribution functions (ECDFs) of $\mathcal{J}$ over the same realizations, aggregated across all problem configurations. (c)--(f) Representative examples of the synthetic turbulent datasets at increasing data quality: (c) $\eta = 0.05$, $\mathcal{N}_m = 20$; (d) $\eta = 0.025$, $\mathcal{N}_m = 40$; (e) $\eta = 0.0125$, $\mathcal{N}_m = 80$; (f) $\eta = 0.00625$, $\mathcal{N}_m = 160$.}
	\label{fig:wheatstone_conv_nonlinear_uniform}
\end{figure}

Panels (a) reveal a fundamental difference from the laminar regime. In the laminar case, the Null-space and Random strategies still improved monotonically as data quality rose, staying within the same order of magnitude as the baseline. Here, they improve only marginally. Their median decays at a slope of $\approx -1$ compared to the $\approx -1.5$ slope of the Brute Force reference. Consequently, even at the highest density and lowest noise, they remain more than $1.5$ orders of magnitude above the optimum. This demonstrates that the local trapping identified in the noise-free analysis persists under stochastic perturbation. Better data narrows the gap but does not close it, proving unable to compensate for a poor initialization when the manifold is strongly non-convex. In contrast, the ADM-CMR and DA medians track the Brute Force bound closely with slopes between $-1.5$ and $-1.7$. They maintain narrow interquartile ranges, just as in the laminar case. The only difference is the higher absolute error levels caused by the more challenging optimization landscape. Both noise distributions yield identical trends.

A second contrast concerns thermodynamic admissibility. In the laminar case, the Null-space and Random initializations often failed completely, yielding no admissible solution in up to half of the coarsest realizations. Here, however, almost every run is admissible. The only exception is the Random initialization at $\mathcal{N}_m = 20$, which is inadmissible in about $8$ to $10$\% of cases. Admissibility requires $g_k q_k \le 0$ on every edge, per Eq.~\eqref{eq:clausius_duhem}. Noise can only flip this product near the boundary where $g_k q_k = 0$. The mild laminar boundary conditions leave one arm nearly stagnant ($\mathrm{Re} \approx 43$), right next to this boundary, so small perturbations easily push it across. The stronger turbulent conditions keep every edge at $\mathrm{Re} \sim 10^{4}$, where the same noise cannot possibly change the sign. Therefore, the poor showing of the Null-space and Random strategies is not a thermodynamic failure but an optimization one: they settle on a valid but strongly suboptimal local minimum.

The ECDFs in Figs.~\ref{fig:wheatstone_conv_nonlinear_gaussian}(b) and \ref{fig:wheatstone_conv_nonlinear_uniform}(b) show the same split distributionally. The Brute Force, ADM-CMR, and DA curves overlap in the low-objective region, with the Brute Force bounding the others from the left. Meanwhile, the Null-space and Random strategies form a distinct cluster roughly two orders of magnitude to the right, centered between $\mathcal{J} \sim 10^{-1}$ and $10^{0}$. In the laminar ECDFs, all four strategies overlapped within about a single order of magnitude. The stark separation seen here is the direct signature of the non-convexity.

The previous sweep varies resolution and noise together. To isolate the noise effect, we fix an intermediate dataset of $\mathcal{N}_m = 90$ data pairs and sweep the noise level from $\eta = 0.2$ down to $\eta = 0$. We test 100 realizations per level for both distributions, reporting accuracy and admissibility side by side in Fig.~\ref{fig:wheatstone_noise_level} (omitting the Brute Force reference). We must note a caveat on how the medians of this figure are formed. Unlike the earlier sweeps, they are taken over the realizations \emph{each specific} solver solves admissibly, rather than over those solved admissibly by all of them. Because the Null-space and Random initializations fail most often at the highest noise levels, their curves in those regions are conditioned only on the easier realizations. Consequently, the performance gap reported below is likely understated.

\begin{figure}[h!]
	\centering
	\svgfig{1.0\columnwidth}{452.01821953}{wheatstone_noise_level}
	\caption{Noise-level sensitivity of the solvers on the turbulent \emph{Wheatstone bridge} at a fixed dataset size ($\mathcal{N}_m = 90$), as the noise level $\eta$ decreases from $0.2$ to $0$ (left to right). Panels (a),(b) correspond to Gaussian noise and (c),(d) to Uniform noise. Panels (a),(c) report the median objective function $\mathcal{J}$ with its interquartile range over 100 realizations, computed over the realizations each solver solves admissibly; panels (b),(d) report the fraction of realizations whose final solution is thermodynamically inadmissible, as a bar chart per solver. The Brute Force reference is omitted.}
	\label{fig:wheatstone_noise_level}
\end{figure}

The ADM-CMR and DA solvers degrade smoothly and almost identically. Their median objective rises with the noise level, while their solutions remain thermodynamically admissible at every level tested. In contrast, the Null-space and Random initializations remain trapped between one and two orders of magnitude above the well-initialized solvers. Their objective values are essentially insensitive to $\eta$. Furthermore, beyond a moderate noise threshold ($\eta \approx 0.05$), they begin to return thermodynamically inadmissible solutions. This inadmissible fraction climbs steeply to between $50$ and $65$\% at $\eta = 0.2$. Thus, in the non-convex turbulent regime, a poor initialization is doubly penalized under heavy noise: it not only stagnates at a high objective but also increasingly violates the second law. Conversely, the initialization-free DA matches the robustness and accuracy of the best-initialized ADM across the entire noise range.

\paragraph{$\bullet$ Influence of the reconstruction effect.}

The CMR has so far used the linear least-squares fit in the laminar regime and the $C^1$-continuous Hermite projection in the turbulent one. Both successfully guided the ADM to the global optimum in their respective settings. However, it remains unclear whether the linear fit can survive a nonlinear dataset. We settle this question by running the two variants against each other on the turbulent bridge.

\begin{figure}[h!]
	\centering
	\svgfig{1.0\columnwidth}{597.15834214}{wheatstone_exp_cmr_comparison}
	\caption{Comparison of the two CMR initialization strategies for the turbulent \emph{Wheatstone bridge}. (a) Turbulent dataset with the linear least-squares fit $q = -0.39\,g$; the laminar dataset is shown for scale. (b) Turbulent dataset with the Hermite finite-element projection $\hat{q}_h(g)$, the points colored by flow regime and the open circles marking the nodes of the underlying finite-element mesh. (c) Objective $\mathcal{J}$ against the ADM iteration count (logarithmic axis, 100 iterations) for both CMR variants alongside the Brute Force global optimum; no iterate of either variant violates the second law. (d)--(f) Field parity against the Brute Force reference for nodal pressures ($p$), edge gradients ($g$) and edge fluxes ($q$), respectively.}
	\label{fig:wheatstone_exp_cmr_comparison}
\end{figure}

The linear fit $q = -0.39\,g$ captures only the average slope of the dataset (Fig.~\ref{fig:wheatstone_exp_cmr_comparison}(a)). Because the S-curve is steepest near the origin and flattens at the extremes, the linear fit underestimates the flux magnitude over most of the gradient range. This underestimation is most severe for $|g|$ between $1$ and $2$. It only overestimates the flux for $|g| \gtrsim 5$. In contrast, the Hermite surrogate $\hat{q}_h(g)$ accurately follows the curve throughout the entire range (Fig.~\ref{fig:wheatstone_exp_cmr_comparison}(b)).

The consequence for the ADM is immediate (Fig.~\ref{fig:wheatstone_exp_cmr_comparison}(c)). The Hermite initialization lands on the Brute Force optimum ($\mathcal{J}_{\text{cmr,hp}} = 1.54 \times 10^{-3}$) at the very first iteration. Conversely, the linear fit starts at $\mathcal{J}_{\text{cmr,lls}} \approx 1.26 \times 10^{-1}$ and stalls at $3.66 \times 10^{-2}$ from iteration 6 onward. It becomes trapped in a similar manner to the Null-space and Random initializations discussed in Sec.~\ref{sec:turbulent_fluid_flow}. However, it falls into a shallower basin because it still manages to recover the two high-gradient edges. Neither variant ever violates the second law. Therefore, what the linear fit suffers is a failure of optimization, not of thermodynamics. Ultimately, what decides the outcome is the fidelity of the surrogate employed to initialize the ADM-CMR. 

Table~\ref{tab:cmr_comparison} confirms this behavior on the recovered fields. The Hermite projection reproduces the optimum to numerical precision. The linear fit misses only Edges 2 and 3, whose operating gradients ($|g| \approx 0.9$ and $2.0$) fall squarely in the poorly fitted region. Edges 0 and 1 operate at $|g| > 4.5$ where the fit remains accurate, so both variants recover them. Both variants also recover Edge 4 because it bridges the two pressure-prescribed nodes, leaving its gradient fixed purely by the boundary conditions. The damage is also confined to two of the three fields. Its worst single entries are the pressure of Node 3 and the gradient of Edge 2, which are off by roughly $58\%$. Evaluated against the exact solution, the linear fit departs by $8.9\%$ in pressure and $9.7\%$ in gradient. This error lies a full order of magnitude above the $\approx 0.8\%$ floor established by the Brute Force optimum itself. Its flux field, by contrast, stays at that $0.6\%$ floor. It is pinned by mass conservation exactly as it was for the Null-space and Random initializations. The failure is thus attributable strictly to the surrogate and not to the sampling. This is especially telling because the DA reaches this exact same optimum using no surrogate at all.

\begin{table}[h!]
	\centering
	\caption{Recovered fields for the turbulent \emph{Wheatstone bridge} under the two CMR initialization strategies, compared against the Brute Force global optimum. Values highlighted in red indicate a relative absolute error greater than $10\%$ with respect to that optimum; the deviation norms in the last row of each block are likewise taken with respect to it. The \emph{Exact} column is the solution of the classical nonlinear problem, reproduced from Table~\ref{tab:turb_adm_initialization_results}. The gradient block doubles as the list of operating gradients, which highlights the correlation between fitting accuracy and solution error.}
	\label{tab:cmr_comparison}
	\vspace{0.2cm}
	\resizebox{0.72\textwidth}{!}{%
		\begin{tabular}{l >{\columncolor{exactshade}}c ccc}
			\toprule
			\textbf{Node} & \exhead{Exact} & \textbf{Brute Force} & \textbf{Hermite} & \textbf{Linear Least-Squares} \\
			\midrule
			0 & \exval{$\phantom{-}1.000$} & $\phantom{-}1.000$ & $\phantom{-}1.000$ & $\phantom{-}1.000$ \\
			1 & \exval{$\phantom{-}5.141$} & $\phantom{-}5.178$ & $\phantom{-}5.178$ & $\phantom{-}5.178$ \\
			2 & \exval{$\phantom{-}0.000$} & $\phantom{-}0.000$ & $\phantom{-}0.000$ & $\phantom{-}0.000$ \\
			3 & \exval{$-0.861$}           & $-0.844$           & $-0.844$           & \errval{$-1.334$} \\
			\midrule
			\textbf{Objective} $\mathcal{J}$ & \exval{--} & $1.54\times10^{-3}$ & $1.54\times10^{-3}$ & $3.66\times10^{-2}$ \\
			$\|\Delta\mathbf{p}\|_2$         & \exval{--} & --                  & $5.4\times10^{-8}$  & $4.9\times10^{-1}$ \\
			\bottomrule
		\end{tabular}
	}

	\vspace{0.35cm}

	\resizebox{0.72\textwidth}{!}{
		\begin{tabular}{l >{\columncolor{exactshade}}c ccc}
			\toprule
			\textbf{Edge} & \exhead{Exact} & \textbf{Brute Force} & \textbf{Hermite} & \textbf{Linear Least-Squares} \\
			\midrule
			\multicolumn{5}{@{}l}{\textit{Gradient} ($g$)} \\
			0\ ($0\!\to\!1$) & \exval{$\phantom{-}4.484$} & $\phantom{-}4.524$ & $\phantom{-}4.524$ & $\phantom{-}4.524$ \\
			1\ ($1\!\to\!2$) & \exval{$-5.567$}           & $-5.607$           & $-5.607$           & $-5.607$ \\
			2\ ($3\!\to\!2$) & \exval{$\phantom{-}0.932$} & $\phantom{-}0.914$ & $\phantom{-}0.914$ & \errval{$\phantom{-}1.444$} \\
			3\ ($0\!\to\!3$) & \exval{$-2.015$}           & $-1.997$           & $-1.997$           & \errval{$-2.527$} \\
			4\ ($0\!\to\!2$) & \exval{$-0.766$}           & $-0.766$           & $-0.766$           & $-0.766$ \\
			$\|\Delta\mathbf{g}\|_2$ & \exval{--} & -- & $3.5\times10^{-7}$ & $7.5\times10^{-1}$ \\
			\addlinespace[0.15cm]
			\multicolumn{5}{@{}l}{\textit{Flux} ($q$)} \\
			0\ ($0\!\to\!1$) & \exval{$-1.852$}           & $-1.856$           & $-1.856$           & $-1.856$ \\
			1\ ($1\!\to\!2$) & \exval{$\phantom{-}2.083$} & $\phantom{-}2.079$ & $\phantom{-}2.079$ & $\phantom{-}2.079$ \\
			2\ ($3\!\to\!2$) & \exval{$-0.775$}           & $-0.761$           & $-0.761$           & $-0.789$ \\
			3\ ($0\!\to\!3$) & \exval{$\phantom{-}1.192$} & $\phantom{-}1.206$ & $\phantom{-}1.206$ & $\phantom{-}1.179$ \\
			4\ ($0\!\to\!2$) & \exval{$\phantom{-}0.694$} & $\phantom{-}0.696$ & $\phantom{-}0.696$ & $\phantom{-}0.696$ \\
			$\|\Delta\mathbf{q}\|_2$ & \exval{--} & -- & $1.9\times10^{-7}$ & $3.9\times10^{-2}$ \\
			\bottomrule
		\end{tabular}
	}
\end{table}

\subsubsection{Network Scale and Topology}\label{sec:scalability}

We now assess how network size affects the DA and the ADM under its three initializations. An $M\times M$ Cartesian grid of square cells is laid on a fixed one-metre square domain. This grid contains $(M+1)^2$ nodes and $2M(M+1)$ edges. Refining the grid shrinks the individual pipe length to $L_{\mathrm{pipe}} = 1/M$ while leaving the overall domain size unchanged. With the boundary conditions and the inlet-to-outlet path length held fixed, this procedure cleanly isolates the effect of the network dimension. The flux splits unevenly across the network, causing the pipes to span a wide range of Reynolds numbers. Specifically, the peripheral pipes experience turbulent flow while the central ones remain laminar. Consequently, a single \emph{Churchill}-based dataset covering $\mathrm{Re} \in [0, \mathrm{Re}_{\max}]$ is used to describe every edge, regardless of the grid size $M$. The problem is solved for $\mathcal{N}_m \in \{50, 100, 200, 400, 800, 1600\}$ and $M \in \{2, 3, 4, 5\}$. The \emph{Churchill}-based dataset is sampled up to $\mathrm{Re}_{\max} = 3\times10^{4}$. Every solver receives a $500$-iteration budget, and the DA cools from $\beta_0 = 0.02$ to $\beta_{\max} = 10^{7}$. Table~\ref{tab:cartesian_network_parameters} details the geometry, the fluid properties (water at $20\,^\circ\mathrm{C}$), and the boundary conditions. Figure~\ref{fig:cartesian_grid_grids} illustrates the topology for the first two resolutions. Notably, this is the first benchmark where Brute Force enumeration is computationally impossible. At the finest dataset resolution, the number of candidate assignments reaches approximately $10^{38}$ on the $2 \times 2$ grid and $10^{192}$ on the $5 \times 5$ grid. Because a certified global optimum is unavailable, we can only compare the iterative solvers against one another.

\begin{figure}[h!]
	\centering
	\begin{minipage}[c]{0.525\textwidth}
		\centering
		\svgfig{1.0\columnwidth}{447.65472382}{cartesian_grid_grids}
		\caption{Schematic illustration of the Cartesian graph topology for the first two resolutions, (a) $M = 2$ and (b) $M = 3$, together with the nodes where the boundary conditions are imposed.}
		\label{fig:cartesian_grid_grids}
	\end{minipage}\hfill
	\begin{minipage}[b]{0.45\textwidth}
		\centering
		\captionof{table}{Geometric properties, fluid physical parameters, and boundary conditions for the Cartesian grid hydraulic network.}
		\label{tab:cartesian_network_parameters}
		\vspace{0.15cm}
		\resizebox{\linewidth}{!}{%
			\begin{tabular}{llcl}
				\toprule
				\textbf{Parameter} & \textbf{Symbol} & \textbf{Value} & \textbf{Unit} \\
				\midrule
				\multicolumn{4}{c}{\textit{Graph Topology \& Geometry}} \\
				\midrule
				Grid dimensions & $M \times M$ & 2 to 5 & -- \\
				Domain size     & $L \times H$ & $1.0 \times 1.0$ & $\mathrm{m}$ \\
				Pipe length     & $L_{\mathrm{pipe}}$ & $1/M$ & $\mathrm{m}$ \\
				Pipe diameter   & $D$   & $1.0 \times 10^{-2}$ & $\mathrm{m}$ \\
				Pipe roughness  & $\varepsilon$ & $1.0 \times 10^{-5}$ & $\mathrm{m}$ \\
				\midrule
				\multicolumn{4}{c}{\textit{Fluid Properties (Water at $20\,^\circ\mathrm{C}$)}} \\
				\midrule
				Fluid density     & $\rho$ & $998.0$ & $\mathrm{kg/m^3}$ \\
				Dynamic viscosity & $\mu_f$  & $1.0 \times 10^{-3}$ & $\mathrm{Pa \cdot s}$ \\
				\midrule
				\multicolumn{4}{c}{\textit{Boundary Conditions}} \\
				\midrule
				Inlet pressure (Bottom-left)  & $p_{\mathrm{in}}$  & $103.30 \times 10^3$ & $\mathrm{Pa}$ \\
				Outlet pressure (Top-right)   & $p_{\mathrm{out}}$ & $101.30 \times 10^3$ & $\mathrm{Pa}$ \\
				Flux (Bottom-right)           & $f_{\mathrm{in}}$  & $2.0 \times 10^{-4}$  & $\mathrm{m^3/s}$ \\
				Flux (Top-left)               & $f_{\mathrm{out}}$ & $-1.0 \times 10^{-4}$ & $\mathrm{m^3/s}$ \\
				\bottomrule
			\end{tabular}
		}
	\end{minipage}
\end{figure}

Figure~\ref{fig:cartesian_grid_network_size_influence} reports the mean per-edge objective $\bar{\mathcal{J}} = \mathcal{J}/n_e$ against the dataset size. This value is normalized so that grids of different sizes share a common axis. For every solver and every grid, $\bar{\mathcal{J}}$ decays monotonically as the dataset densifies. The primary difference between the methods is their baseline error level rather than their decay rate. The DA solver maintains a convergence slope of approximately $-2$ on every grid (specifically $-1.96$, $-1.97$, $-2.02$, and $-2.00$ for $M = 2$ through $5$; only the ADM-CMR slope is annotated in the figure). This demonstrates that its accuracy is essentially unaffected by the network dimension. In contrast, the convergence slope for the ADM-CMR method degrades from $-1.80$ at $M = 2$ down to $-1.43$ at $M = 5$. The Null-space and Random initializations decay at comparable rates, but from a persistently higher offset. They end roughly an order of magnitude above the DA at the finest dataset. They are also the least reliable, violating the second law at every resolution tested for $M = 3$ and $M = 4$. The crosses show that the ADM-CMR and the DA are not completely immune to failures either, predominantly occurring on the sparser datasets. Ultimately, the DA retains both the lowest objective and the most size-robust convergence, remarkably without requiring any specific initialization.

\begin{figure}[h!]
	\centering
	\svgfig{1.0\columnwidth}{547.94963698}{cartesian_grid_network_size_influence}
	\caption{Mean per-edge objective $\bar{\mathcal{J}} = \mathcal{J}/n_e$ versus the dataset size $\mathcal{N}_m$ for Cartesian grids of increasing dimension: (a) $M=2$ ($12$ edges), (b) $M=3$ ($24$ edges), (c) $M=4$ ($40$ edges) and (d) $M=5$ ($60$ edges). One curve per solver (ADM-CMR, ADM-Null-space, ADM-Random and DA); colored circles denote admissible solutions and black crosses ($\times$) second-law violations. Each panel is annotated with the ADM-CMR slope.}
	\label{fig:cartesian_grid_network_size_influence}
\end{figure}

Finally, Fig.~\ref{fig:cartesian_grid_da_field_regime} shows the DA solution at $\mathcal{N}_m = 1600$ for $M = 2$ and $M = 3$. Column (a) displays the nodal pressure, column (b) the edge flux magnitude, and column (c) the per-edge Reynolds number. The Reynolds number map explicitly illustrates the coexistence of the two flow regimes. Because the color scales are shared down each column, the two grids can be read against each other directly.

\begin{figure}[h!]
	\centering
	\svgfig{1.0\columnwidth}{700}{cartesian_grid_da_field_regime}
	\caption{DA solution on the Cartesian grids at $\mathcal{N}_m = 1600$, for $M = 2$ (top row) and $M = 3$ (bottom row). (a) Nodal pressure. (b) Edge flux magnitude on a logarithmic scale, the triangles giving the flow direction. (c) Per-edge Reynolds number on a logarithmic scale, with the laminar--turbulent transition $\mathrm{Re} = 2300$ marked on the colorbar. Nodes carry their index, and the four corners the boundary conditions of Table~\ref{tab:cartesian_network_parameters}. Color scales are shared down each column, so the two grids are directly comparable.}
	\label{fig:cartesian_grid_da_field_regime}
\end{figure}


\subsubsection{Networks with Lumped Elements: Elbows}\label{sec:lumped_elements}

Real hydraulic networks connect their ducts through various fittings, such as valves, bends, elbows, tees, and expansions. The flow separation and mixing within these components cause localized minor losses. A fitting is characterized experimentally by a loss coefficient $K_L \geq 0$ \cite{Crane2009}, giving a pressure drop
\begin{equation}
	\Delta p_{\text{fitting}} = K_L\,\frac{\rho}{2A^2}\,|q|\,q,
	\label{eq:minor_loss}
\end{equation}
with $|q|q$ enforcing the correct sign on flow reversal and $A$ taken constant through the fitting.

To incorporate a fitting into the graph, we model it as a short \emph{arm} sub-edge of length $L_{\text{arm}} = 2D$ with a uniform pressure gradient. This length is short relative to a standard pipe, yet long enough to keep the arm well-conditioned within the discrete numerical operators. The arm length is tied directly to the vessel calibre, ensuring that all arms meeting at a specific hub share the same diameter. Its resistance is
\begin{equation}
	R_{\text{fitting}} = \frac{8\rho}{\pi^2 D_k^4}\,\frac{K_{L}}{L_{\text{arm}}}\,|q_k|,
	\label{eq:fitting_resistance}
\end{equation}
which has the \emph{same} structure as the \emph{Darcy--Weisbach} resistance~\eqref{eq:hyd_resistance} under $f_k/D_k \mapsto K_L/L_{\text{arm}}$, so a fitting needs no machinery of its own. Each fitting node becomes a \emph{hub}. An arm is inserted on every incident edge, and its length is subtracted from the connected pipe to preserve the overall geometry (Fig.~\ref{fig:lumped_elements}). In this study, we consider only $90^{\circ}$ elbows. Because an elbow is symmetric, it is modeled using two equal arms. Each arm carries a loss coefficient of $K_{L}/2$ because they experience the same flux and their pressure drops add in series. The arm law is thus perfectly odd about the origin.

\begin{figure}[h!]
	\centering
	\begin{minipage}[c]{0.455\textwidth}
		\centering
		\svgfig{1.0\columnwidth}{290.42194431}{lumped_elements}
		\caption{Hub-and-arm decomposition of lumped elements. (a) Original elbow graph and (b) its mutated hub-and-arm form (two arms, $K_L/2$ each).}
		\label{fig:lumped_elements}
	\end{minipage}\hfill
	\begin{minipage}[c]{0.515\textwidth}
		\centering
		\captionof{table}{Geometric properties, fitting constants, fluid physical parameters, boundary conditions and solver settings for the square loop with $90^{\circ}$ elbows. The node and edge counts are those of the mutated graph, obtained by applying the hub-and-arm decomposition of Fig.~\ref{fig:lumped_elements} at each of the four corners; node indices follow the loop counter-clockwise from the inlet corner.}
		\label{tab:lumped_network_parameters}
		\vspace{0.15cm}
		\resizebox{\linewidth}{!}{%
		\begin{tabular}{llcl}
			\toprule
			\textbf{Parameter} & \textbf{Symbol} & \textbf{Value} & \textbf{Unit} \\
			\midrule
			\multicolumn{4}{c}{\textit{Graph Topology \& Geometry}} \\
			\midrule
			Loop side length                     & --                    & $1.0$                 & $\mathrm{m}$ \\
			Number of nodes (hubs $+$ arm nodes) & $n_v$                 & $12$                  & -- \\
			Number of edges (4 pipes $+$ 8 arms) & $n_e$                 & $12$                  & -- \\
			Arm length                           & $L_{\mathrm{arm}}$    & $2D = 2.0\times10^{-2}$ & $\mathrm{m}$ \\
			Pipe length, after shortening        & $L_{\mathrm{pipe}}$   & $0.96$                & $\mathrm{m}$ \\
			Pipe and arm diameter                & $D$                   & $1.0 \times 10^{-2}$  & $\mathrm{m}$ \\
			Pipe roughness                       & $\varepsilon$         & $1.0 \times 10^{-5}$  & $\mathrm{m}$ \\
			\midrule
			\multicolumn{4}{c}{\textit{Fitting (flanged $90^{\circ}$ elbow, Hooper two-$K$)}} \\
			\midrule
			Low-Reynolds constant                & $K_1$                 & $800$                 & -- \\
			Fully turbulent constant             & $K_{\infty}$          & $0.25$                & -- \\
			Per-arm loss coefficient, $\mathrm{Re}\!\to\!\infty$ & $K_L/2$ & $0.443$             & -- \\
			\midrule
			\multicolumn{4}{c}{\textit{Fluid Properties (Water at $20\,^\circ\mathrm{C}$)}} \\
			\midrule
			Fluid density                        & $\rho$                & $998.0$               & $\mathrm{kg/m^3}$ \\
			Dynamic viscosity                    & $\mu_f$               & $1.0 \times 10^{-3}$  & $\mathrm{Pa \cdot s}$ \\
			\midrule
			\multicolumn{4}{c}{\textit{Boundary Conditions}} \\
			\midrule
			Inlet pressure (Node 0)              & $p_{\mathrm{in}}$     & $105.30 \times 10^3$  & $\mathrm{Pa}$ \\
			Outlet pressure (Node 2)             & $p_{\mathrm{out}}$    & $101.30 \times 10^3$  & $\mathrm{Pa}$ \\
			Source flux (Node 1)                 & $f_{\mathrm{in}}$     & $2.0 \times 10^{-4}$  & $\mathrm{m^3/s}$ \\
			Sink flux (Node 3)                   & $f_{\mathrm{out}}$    & $-2.0 \times 10^{-4}$ & $\mathrm{m^3/s}$ \\
			\midrule
			\multicolumn{4}{c}{\textit{Dataset and Solver Settings}} \\
			\midrule
			Samples per dataset key              & $\mathcal{N}_m$       & $200$                 & -- \\
			Sampling Reynolds bound              & $\mathrm{Re}_{\max}$  & $7.0 \times 10^{4}$   & -- \\
			Iteration budget                     & $J_{\max}$            & $900$                 & -- \\
			Annealing range                      & $\beta_0 \to \beta_{\max}$ & $0.02 \to 10^{6}$ & -- \\
			\bottomrule
		\end{tabular}}
	\end{minipage}
\end{figure}

The loss coefficient itself follows the two-constant correlation of Hooper \cite{Hooper1981},
\begin{equation}
	K_L(D, \mathrm{Re}) = \frac{K_1}{\mathrm{Re}} + K_\infty\left(1 + \frac{1\,\text{in}}{D}\right),
	\label{eq:hooper_2k}
\end{equation}
closed-form in the inside diameter and free of range restrictions, with $K_\infty$ the coefficient of a large fitting in the fully turbulent limit and $K_1$ its value at $\mathrm{Re} = 1$. For the flanged $90^{\circ}$ elbow adopted here, $K_1 = 800$ and $K_\infty = 0.25$. This yields $K_L = 0.885$ at $D = 1\,\mathrm{cm}$ in the turbulent limit. Substituting~\eqref{eq:hooper_2k} with $\mathrm{Re} = 4\rho|q_k|/(\pi D_k \mu_f)$ separates the flux dependence exactly,
\begin{equation}
	R_{\text{fitting}}(q_k) = \underbrace{\frac{2\mu_f K_1}{\pi D_k^3 L_{\text{arm}}}}_{\text{viscous, constant}}
	\;+\; \underbrace{\frac{8\rho\,K_\infty\!\left(1 + 1\,\text{in}/D_k\right)}{\pi^2 D_k^4 L_{\text{arm}}}\,|q_k|}_{\text{inertial}, \;\propto\, |q_k|},
	\label{eq:fitting_resistance_hooper}
\end{equation}
The arm law $g_k = -R_{\text{fitting}}(q_k)\,q_k$ represents the quadratic term from Eq.~\eqref{eq:minor_loss} plus a linear viscous correction that a simple constant-$K$ table omits. This forms the entire model. Arms sharing a specific key (type and $D$) draw from a single dataset. We generate this dataset via \emph{Local Constitutive Sampling} exactly as for the pipes, using the same dimensionless scales. Figure~\ref{fig:results_lumped_elements}(b) displays the pipe and elbow datasets alongside their Hermite surrogates.

The benchmark itself is a square loop with a side length of $1\,\mathrm{m}$. Its four pipes meet at $90^{\circ}$ corners, meaning every corner acts as a fitting. The hub-and-arm expansion transforms the original four-node, four-edge graph into an expanded network of $12$ nodes and $12$ edges. This expanded graph consists of four pipes (each shortened by one arm at either end) and eight fitting arms. Pressures are prescribed at two opposite corners, and nodal fluxes are set at the remaining two, forcing the two branches of the loop to carry different fluxes. Table~\ref{tab:lumped_network_parameters} collects the geometry, the elbow constants, the fluid properties, the boundary conditions, and the dataset and annealing settings used throughout this section.

This is also the first benchmark where the per-dataset metric weight of Eq.~\eqref{eq:c_choice} deserves attention. The two dataset keys populate the phase space with substantially different aspect ratios. Measured on the dimensionless clouds, the ratio $c^{(\mu)} = \sigma_q^{(\mu)}/\sigma_g^{(\mu)}$ equals $0.534$ for the standard pipe and just $0.056$ for the elbow arm. These values sit nearly a decade apart because the short, high-resistance arm spreads over a much wider gradient range at the same flux. If we repeat the experiment using a plain isotropic metric $\mathbf{C} = \mathbf{I}$, the ADM-CMR and the DA solvers no longer agree. The DA solver settles at an objective $31$ times higher than the ADM-CMR. When using the key-dependent weights, both methods successfully reach the exact same solution.

The Brute Force method is not evaluated in Fig.~\ref{fig:results_lumped_elements}(a). Enumeration would require checking $N_{\text{comb}} = \mathcal{N}_{\text{pipe}}^{4}\,\mathcal{N}_{\text{arm}}^{8} = 4.1\times10^{27}$ assignments, so no certified global optimum is available. The reference value becomes the best admissible objective reached by any solver, which itself acts as an upper bound on the true minimum.

The ADM--Random method becomes completely trapped in a state that violates the second law, meaning it is never admissible. Conversely, the ADM--Null-space reaches a thermodynamically consistent but highly suboptimal solution of $\mathcal{J}_{\text{ns}} = 2.098\times10^{-2}$. The ADM-CMR and the DA methods converge to the exact same objective: $\mathcal{J}_{\text{cmr}} = \mathcal{J}_{\text{da}} = 1.4747\times10^{-4}$. Their recovered fields agree to within tight numerical tolerances: $\|\Delta \uH\| = 4.6\times10^{-9}$, $\|\Delta \eH\| = 1.4\times10^{-9}$, and $\|\Delta \sH\| = 1.6\times10^{-10}$. Having two completely independent solvers land on the same assignment provides the strongest possible evidence of success when a certified optimum is absent. When split by data key, the eight arms carry $76\%$ of the total objective ($1.13\times10^{-4}$, compared to just $3.48\times10^{-5}$ on the four pipes). Thus, the fittings dominate the residual rather than the pipes. Their operating states anticipate this: over the same flux range $|q| \leq 0.499$, the arms sustain gradients $|g|$ up to $2.887$, while the pipes only reach $0.354$. Finally, Figures~\ref{fig:results_lumped_elements}(c)-(e) illustrate the pressure and flux fields for the DA, ADM-CMR, and ADM--Null-space solutions, respectively.

\begin{figure}[h!]
	\centering
	\svgfig{1.0\columnwidth}{596.22928169}{results_lumped_elements}
	\caption{(a) Evolution of the objective function $\mathcal{J}$ across the iterations. (b) Laminar and turbulent datasets for the pipe elements together with the Hermite finite-element projection $\hat{q}_{h}(g)$; Fitting dataset for the elbow $90^\circ$ along with its Hermite-finite element projection. (c)-(e) Nodewise pressure field and edgewise flux field solutions for the DA, ADM-CMR and ADM--Null-space, respectively. ADM-Random is not shown, since it never reaches a thermodynamically admissible solution.}
	\label{fig:results_lumped_elements}
\end{figure}

Figure~\ref{fig:lumped_elements_da_square_elbow} details the annealing process on the square loop. Every arm and pipe successfully collapses to a single assigned datum by the end of the cooling schedule, yielding $N_{\mathrm{eff}} = 1.000$ on all twelve edges. The individual assignment weights at the coldest iteration confirm that the DA recovers the exact same hard assignment as the ADM method.

However, the two dataset keys do not harden simultaneously. The fitting arms collapse into a hard assignment at $\beta \approx 8.8\times10^{2}$, while the pipes only collapse at $\beta \approx 8.5\times10^{3}$, which is an order of magnitude colder. The objective function stops changing at $\beta \approx 8.2\times10^{3}$, coinciding directly with the hardening of the pipe data. Therefore, the minor oscillation of $\mathcal{J}_{\text{da}}$ during the middle and late iterations reflects the pipe assignments redistributing while the arms are already completely frozen. This two-timescale hardening behavior is determined by the per-key nearest-neighbor gaps described in Proposition~\ref{prop:dda_limits}. The dense, high-resistance arm data clouds resolve their assignments at much warmer temperatures than the pipe data. Therefore, a single global cooling schedule must reach a terminal temperature cold enough to accommodate the slowest-hardening dataset key in the entire network.

\begin{figure}[h!]
	\centering
	\svgfig{0.8\columnwidth}{562.52975824}{lumped_elements_da_square_elbow}
	\caption{Mechanistic evolution of the DA solver on the square elbow lumped elements. (a) Active data points per edge, $N_{\mathrm{eff}} = \exp\mathcal{H}$, versus inverse temperature $\beta$, showing the contraction from uniform assignment ($\beta \to 0$) to a single datum ($\beta \to \infty$). (b) Final membership weights $w_n$ at $\beta \approx 980495$, illustrating data localization. (c) Weight evolution $w_n(\beta)$ for pipes 0--3 and arms 4--11, where the diffuse probability mass concentrates onto the target datum (horizontal dotted lines) as $\beta$ increases.}
	\label{fig:lumped_elements_da_square_elbow}
\end{figure}

Finally, we test the benchmark on noisy data. We add Gaussian noise (standard deviation $\eta = 0.05$) to both components of every dataset point (Fig.~\ref{fig:results_lumped_elements_noisy}). The behavior observed on noise-free data persists. ADM-Random remains inadmissible throughout the iterations. ADM-Null-space stays trapped in a local minimum ($\mathcal{J}_{\text{ns}} = 5.389\times10^{-2}$). The ADM-CMR solver also degrades because its constitutive manifold reconstruction fits the corrupted cloud. It settles at $\mathcal{J}_{\text{cmr}} = 1.605\times10^{-2}$. This is approximately twice the objective reached by the DA method ($\mathcal{J}_{\text{da}} = 8.115\times10^{-3}$), which attains the best objective among the four solvers. Nevertheless, the noise penalizes all methods. The perturbed samples no longer lie on the manifold, creating a residual distance that sets a floor for the objective function. As a result, the best noisy objective is higher than the noise-free reference ($\mathcal{J} = 1.4747\times10^{-4}$).

\begin{figure}[h!]
	\centering
	\svgfig{1.0\columnwidth}{500}{lumped_elements_noisy}
	\caption{Square loop with 90° elbows under Gaussian noise ($\eta = 0.05$). (a) Evolution of the objective function $\mathcal{J}$ across the iterations; the dashed line marks the best objective reached on the noisy data and the dash-dotted line the noise-free reference of Fig.~\ref{fig:results_lumped_elements}(a). (b) Perturbed pipe and elbow-arm datasets against their noise-free manifolds; circled markers are the samples driven across $g\,q \leq 0$ by the noise.}
	\label{fig:results_lumped_elements_noisy}
\end{figure}


\subsection{Application to Hemodynamics}\label{sec:hemodynamics}

Hemodynamics is a natural application for data-driven approaches because \textit{in vivo} microcirculation measurements of pressure and flow are notoriously difficult to obtain. Moreover, non-Newtonian viscosity representations like the \emph{Carreau--Yasuda} model are empirical fits based on experimental blood rheology data. We apply our framework to a small arteriovenous bed modeled as an arterial tree rooted at a single feeding arteriole. This tree bifurcates over four generations following \emph{Murray's law} \cite{Murray1926}. We adopt a branching exponent of $k = 3.0$ for both the arterial and venous trees.

\begin{equation}
	D_{\text{parent}}^k = D_{\text{major}}^k + D_{\text{minor}}^k, \qquad k = 3.0
\end{equation}

We assume a continuous asymmetry factor of $\alpha = 0.85$ between the two daughter branches at every bifurcation down to the sixteen terminal arterioles. Substituting $D_{\text{minor}} = \alpha\,D_{\text{major}}$ yields the branching relations directly.

\begin{equation}
	D_{\text{major}} = \frac{D_{\text{parent}}}{(1 + \alpha^k)^{1/k}}, \quad D_{\text{minor}} = \alpha\,D_{\text{major}}
\end{equation}

We construct a mirrored venous tree rooted at a single collecting venule. This tree branches outward to sixteen terminal venules corresponding directly to the terminal arterioles. A \emph{capillary fan} containing $28$ parallel paths bridges each arteriole--venule leaf pair. Each path splits into two series sub-edges at a shared midpoint node. The resulting graph (Fig.~\ref{fig:hemodynamics_network}) comprises $512$ nodes and $958$ edges, including $896$ capillary sub-edges.

\begin{figure}[h!]
	\centering
	\svgfig{0.975\columnwidth}{702.99230149}{hemodynamics_network}
	\caption{Arteriovenous bed used in the hemodynamic experiment (not to scale). \emph{Top:} The modeled window (highlighted) spans the arteriole--capillary--venule segment of the circulation, with the larger upstream and downstream vessels drawn for context. \emph{Bottom:} The resulting network graph.}
	\label{fig:hemodynamics_network}
\end{figure}

The two trees are rooted at a feeding arteriole ($D = 30\,\mu\mathrm{m}$) and a collecting venule ($D = 60\,\mu\mathrm{m}$) rather than the aorta and vena cava. We set the boundary pressures accordingly: an inlet pressure of $p_{\mathrm{in}} = 60~\mathrm{mmHg}$ and an outlet of $p_{\mathrm{out}} = 15~\mathrm{mmHg}$. These values match those measured by Zweifach and Lipowsky \cite{Zweifach1977} across comparable microvascular modules, yielding a total pressure drop of $45~\mathrm{mmHg}$. Whole blood exhibits shear-thinning properties; red blood cells aggregate into rouleaux at low shear rates and disperse as the shear rate increases. We apply the \emph{Carreau--Yasuda} non-Newtonian model to the capillary edges using the whole-blood parameters established by Cho and Kensey \cite{ChoKensey1991, Gijsen1999}. The arterioles and venules use a simpler Newtonian closure. Their higher wall shear rates keep the apparent fluid viscosity near the plateau $\mu_{f,\infty}$, making the two models virtually indistinguishable. 

Both models provide continuum descriptions that serve as an idealization at the capillary scale. Because the capillary diameter $D_{\text{cap}}$ is comparable to a single red blood cell, the actual \emph{in vivo} resistance below approximately $15\,\mu\mathrm{m}$ is substantially higher than predictions from viscosity laws calibrated in standard glass tubes \cite{FahraeusLindqvist1931, Pries1990}. Consequently, the closure acts only as a generator of synthetic data pairs (as defined in Remark~\ref{rem:role_closures}) rather than a definitive medical claim about human capillary rheology. Table~\ref{tab:hemodynamic_parameters} summarizes the geometry, fluid parameters, and boundary conditions.

Before applying the data-driven solution, we characterize the classical problem. The physical system is fundamentally nonlinear because the capillary law depends on the edge's flux through $\mu_{f,\mathrm{app}}(q_k)$. The Picard scheme outlined in Algorithm~\ref{alg:picard_solver} converges in $12$ iterations. The exact solution $\mathbf{z}_{\mathrm{ex}} = (\eH_{\mathrm{ex}}, \sH_{\mathrm{ex}})$ provides a gold-standard reference for the recovered fields and maps the capillary operating points onto the constitutive curve in Fig.~\ref{fig:hemodynamics_cy_curve}. The built-in asymmetry of the branching trees causes the capillary wall shear rate to span $\dot\gamma \approx 183$ to $612~\mathrm{s}^{-1}$, with a mean of $349~\mathrm{s}^{-1}$. This places roughly $40\%$ of the sub-edges within the highly shear-thinning range. The mean apparent viscosity is $3.79\times10^{-3}~\mathrm{Pa\,s}$. This value is $14.8$ times lower than the zero-shear limit $\mu_{f,0}$ but remains $10.0\%$ above the high-shear plateau $\mu_{f,\infty}$.

\begin{figure}[h!]
	\centering
	\begin{minipage}[c]{0.46\textwidth}
		\centering
		\svgfig{1.0\columnwidth}{457.15264893}{hemodynamics_cy_curve}
		\caption{Apparent viscosity of the \emph{Carreau--Yasuda} closure, with the zero-shear and high-shear plateaus indicated and the shaded band marking the range over which the departure from the high-shear plateau exceeds $10\%$. The capillary operating points of the exact nonlinear solution are superimposed.}
		\label{fig:hemodynamics_cy_curve}
	\end{minipage}\hfill
	\begin{minipage}[b]{0.51\textwidth}
		\centering
		\captionof{table}{Geometric properties, fluid and Carreau--Yasuda parameters, and boundary conditions for the arteriovenous tree.}
		\label{tab:hemodynamic_parameters}
		\vspace{0.15cm}
		\resizebox{\linewidth}{!}{%
		\begin{tabular}{llcl}
			\toprule
			\textbf{Parameter} & \textbf{Symbol} & \textbf{Value} & \textbf{Unit} \\
			\midrule
			\multicolumn{4}{c}{\textit{Topology \& Geometry}} \\
			\midrule
			Bifurcation generations & -- & 4 & -- \\
			Segment length-to-diameter ratio & -- & 20 & -- \\
			Root arteriole diameter & -- & 30 & $\mu\mathrm{m}$ \\
			Root venule diameter & -- & 60 & $\mu\mathrm{m}$ \\
			Murray exponent (both trees) & $k$ & 3.0 & -- \\
			Daughter-branch asymmetry & $\alpha$ & 0.85 & -- \\
			Terminal arteriole diameters & -- & 8.3--15.8 & $\mu\mathrm{m}$ \\
			Terminal venule diameters & -- & 16.5--31.7 & $\mu\mathrm{m}$ \\
			Capillary diameter & $D_{\text{cap}}$ & 8 & $\mu\mathrm{m}$ \\
			Capillary path length & $L_{\text{cap}}$ & 1.2 & $\mathrm{mm}$ \\
			Capillary paths per leaf pair & -- & 28 & -- \\
			Branch half-angles (gen. 1--4) & -- & 35,\ 28,\ 22,\ 16 & $^{\circ}$ \\
			\midrule
			\multicolumn{4}{c}{\textit{Fluid Properties (Blood)}} \\
			\midrule
			Blood density & $\rho$ & 1060.0 & $\mathrm{kg/m^3}$ \\
			Newtonian viscosity (arterioles and venules) & $\mu_f$ & $3.5\times10^{-3}$ & $\mathrm{Pa \cdot s}$ \\
			\midrule
			\multicolumn{4}{c}{\textit{Carreau--Yasuda Parameters (capillary edges)}} \\
			\midrule
			Zero-shear viscosity & $\mu_{f,0}$ & $5.6\times10^{-2}$ & $\mathrm{Pa \cdot s}$ \\
			High-shear asymptotic viscosity & $\mu_{f,\infty}$ & $3.45\times10^{-3}$ & $\mathrm{Pa \cdot s}$ \\
			Relaxation time & $\lambda$ & 1.902 & $\mathrm{s}$ \\
			Transition sharpness & $a$ & 1.25 & -- \\
			Power-law index & $n$ & 0.22 & -- \\
			\midrule
			\multicolumn{4}{c}{\textit{Boundary Conditions}} \\
			\midrule
			Root arteriole pressure & $p_{\text{in}}$ & 60 & $\mathrm{mmHg}$ \\
			Root venule pressure & $p_{\text{out}}$ & 15 & $\mathrm{mmHg}$ \\
			\bottomrule
		\end{tabular}}
	\end{minipage}
\end{figure}

The asymmetric bifurcations give almost every generation a different pair of daughter diameters, resulting in $31$ dataset keys. This comprises $30$ macro-vessel keys (one per calibre, with $15$ per tree) and a single capillary key shared by all $896$ capillary sub-edges. We sample the macro-vessel keys with $150$ points each and the more complex capillary key with $600$ points. This yields a pooled dataset of $|\mathcal{D}| = 5100$ dimensionless $(g,q)$ pairs scaled by the harmonic mean. We size each key's flux range dynamically based on the operating flux of its edges, adding a $2\times$ headroom margin. Figure~\ref{fig:hemodynamics_manifold} displays the generated datasets. The $30$ macro-vessel datasets fan out by calibre to form straight \emph{Hagen--Poiseuille} loci. The capillary dataset curves according to the shear-thinning law. This curvature remains mild over the sampled range, meaning the physiological bed lacks the pronounced non-convexity seen in the turbulent manifolds of Sec.~\ref{sec:turb_adm_da_baseline}.

\begin{figure}[h!]
	\centering
	\svgfig{0.9\columnwidth}{450}{hemodynamics_manifold}
	\caption{Constitutive datasets of the arteriovenous bed, obtained by \emph{Local Constitutive Sampling}. (a) The $30$ macro-vessel keys, one per calibre and following the \emph{Hagen--Poiseuille} closure, colored by vessel diameter. (b) The single capillary key, following the \emph{Carreau--Yasuda} closure, with a zoom inset resolving the individual sampled points.}
	\label{fig:hemodynamics_manifold}
\end{figure}

All subsequent numerical runs cool from $\beta_0 = 0.02$ to a terminal $\beta_{\max} = 10^{7}$ over $900$ iterations. We build the CMR initialization using the $C^1$-continuous Hermite projection rather than a linear fit. Because the capillary key curves continuously under the shear-thinning closure, a single effective linear conductance misrepresents its mathematical nature. This matches our observations on the turbulent manifold in Sec.~\ref{sec:turbulent_fluid_flow}. The geometric curvature of the manifold, rather than the fluid flow regime, dictates the necessary surrogate variant. While this biological bed remains laminar on every edge, it still contains one highly nonlinear data key. The $30$ macro-vessel keys act as straight \emph{Hagen--Poiseuille} loci, which the cubic Hermite surrogate reproduces exactly.

Table~\ref{tab:hemo_solvers} compares the DA solver against the three ADM initializations based on the achieved objective and the field error against the exact reference $\mathbf{z}_{\mathrm{ex}}$. The DA solver attains the lowest objective value, with the ADM-CMR settling just $3\%$ higher. However, the ADM-CMR returns the smallest field errors, keeping all three metrics below $0.6\%$. The Null-space and Random initializations perform poorly. They finish an order of magnitude higher in objective and produce field errors four to six times larger than the well-initialized methods. 

This behavior mirrors the pattern established on the \emph{Wheatstone bridge}. A faithful reconstruction guides ADM to the optimum, while DA reaches the same neighborhood without needing a prior initialization or a complex $31$-key surrogate model. Finally, all four solvers are thermodynamically admissible across all $958$ edges. For DA, this admissibility holds even at the coldest iterate. Its fully converged state satisfies all physical constraints to machine precision ($\|\mathbf{D}\sH - \mathbf{f}\|_2 = 4.8\times10^{-15}$, $\|\eH - \mathbf{L}^{-1}\mathbf{G}\uH\|_2 = 9.3\times10^{-16}$).

\begin{table}[h!]
	\centering
	\caption{Comparison of the DA and ADM approximations for the arteriovenous bed. The field errors are relative $\ell_2$ deviations with respect to the exact nonlinear solution $\mathbf{z}_{\mathrm{ex}}$.}
	\label{tab:hemo_solvers}
	\vspace{0.2cm}
	\resizebox{0.85\textwidth}{!}{%
		\begin{tabular}{@{}lcccc@{}}
			\toprule
			\textbf{Solver} & $\mathcal{J}$
			& $\|\Delta\eH\|/\|\eH_{\mathrm{ex}}\|$ & $\|\Delta\sH\|/\|\sH_{\mathrm{ex}}\|$
			& $\|\Delta\uH\|/\|\uH_{\mathrm{ex}}\|$ \\
			\midrule
			DA               & $7.06\times10^{-4}$ & $6.09\times10^{-3}$ & $4.14\times10^{-3}$ & $6.24\times10^{-3}$ \\
			ADM-CMR          & $7.28\times10^{-4}$ & $5.62\times10^{-3}$ & $3.87\times10^{-3}$ & $4.37\times10^{-3}$ \\
			ADM-Null-space   & $9.50\times10^{-3}$ & $2.50\times10^{-2}$ & $2.56\times10^{-2}$ & $3.95\times10^{-2}$ \\
			ADM-Random       & $8.90\times10^{-3}$ & $2.98\times10^{-2}$ & $1.91\times10^{-2}$ & $3.93\times10^{-2}$ \\
			\bottomrule
	\end{tabular}}
\end{table}

Figure~\ref{fig:hemodynamics_solver_comparison}(a) visualizes the convergence histories. The DA solver plateaus early until the cooling schedule crosses the soft-to-hard transition, at which point it drops to the lowest recorded objective. The three ADM runs settle within a few tens of iterations. The poorly performing Null-space and Random runs briefly violate the second law (indicated by cross markers) before the alternating projection mechanism restores their admissibility. Figure~\ref{fig:hemodynamics_solver_comparison}(b) details the pressure-drop budget for the recovered DA solution. Of the total $45~\mathrm{mmHg}$, the dense arteriolar tree absorbs $32.27~\mathrm{mmHg}$ ($71.7\%$). The capillary bed absorbs $5.95~\mathrm{mmHg}$ ($13.2\%$), and the venular tree takes the remaining $6.78~\mathrm{mmHg}$ ($15.1\%$). This arteriole-dominated pressure split aligns with the biological role of arterioles as the primary resistance vessels in tissue beds.

\begin{figure}[h!]
	\centering
	\svgfig{1.0\columnwidth}{505.50090926}{hemodynamics_solver_comparison}
	\caption{Solver behavior on the arteriovenous bed. (a) Evolution of the objective function $\mathcal{J}$ across the iterations; the DA solver cools from $\beta_0 = 0.02$ to $\beta_{\max} = 10^{7}$, all solvers are allocated the same budget of $900$ iterations, and cross markers ($\times$) indicate states that violate the second law of thermodynamics. (b) Pressure-drop budget of the recovered DA solution across the bed, per vessel class, as a fraction of the $45~\mathrm{mmHg}$ imposed between the root arteriole and the root venule.}
	\label{fig:hemodynamics_solver_comparison}
\end{figure}

Figure~\ref{fig:hemodynamics_da_fields} displays the fields recovered by the solver framework. The nodal pressure decreases monotonically from arteriole to venule, experiencing its steepest drop along the highly resistant arteriolar tree. The edge flux spans nearly three orders of magnitude, ranging from the main feeding trunk down to individual capillary sub-edges.

\begin{figure}[h!]
	\centering
	\svgfig{1.0\columnwidth}{450}{hemodynamics_da_fields}
	\caption{Recovered DA solution on the arteriovenous bed. (a) Nodal pressure field, in $\mathrm{mmHg}$. (b) Edge flux magnitude, in $\mathrm{nL/s}$, on a logarithmic scale, with the macro-vessel edges drawn thicker than the capillary sub-edges. Both trees are laid out as dendrograms, with the generation on the horizontal axis and the leaf pair on the vertical axis, because the fractal coordinates employed for the physics self-intersect at four generations. The node and edge counts, the generation depth and the capillary fan pairing are those of the actual network.}
	\label{fig:hemodynamics_da_fields}
\end{figure}

Table~\ref{tab:hemo_physiology} summarizes the recovered physiological solution by vessel class in dimensional terms. The capillaries operate at $0.34~\mathrm{mm/s}$ with a wall shear stress of $\tau_w = 1.32~\mathrm{Pa}$ and a microscopic Reynolds number of $\mathrm{Re} \approx 7.7\times10^{-4}$, placing them squarely in the creeping-flow regime. The arteriolar class sustains the highest wall shear stress in the bed at $11.17~\mathrm{Pa}$. This occurs because each narrow $8.3~\mu\mathrm{m}$ terminal arteriole acts as a high-pressure manifold feeding a $28$-capillary fan. Because this test bed is an idealized construction, these values demonstrate the algorithmic framework's output rather than serving as a calibrated physiological validation.

\begin{table}[h!]
	\centering
	\caption{Recovered solution by vessel class, expressed in dimensional terms. The upper block reports the recovered per-class quantities; the lower one the pressure-drop budget, as fractions of the total $45~\mathrm{mmHg}$ drop.}
	\label{tab:hemo_physiology}
	\vspace{0.2cm}
	\resizebox{0.8\textwidth}{!}{%
		\begin{tabular}{@{}lcccccc@{}}
			\toprule
			\textbf{Class} & $n_e$ & $D~[\mu\mathrm{m}]$ & $\overline{v}~[\mathrm{mm/s}]$
			& $\mathrm{Re}$ & $\tau_w~[\mathrm{Pa}]$ & $\mu_{f,\mathrm{app}}~[\mathrm{mPa\,s}]$ \\
			\midrule
			Arteriolar & $31$  & $8.3$--$30.0$ & $5.64$ & $2.70\times10^{-2}$ & $11.17$ & $3.500$ \\
			Capillary  & $896$ & $8.0$         & $0.34$ & $7.70\times10^{-4}$ & $1.32$  & $3.794$ \\
			Venular    & $31$  & $16.5$--$60.0$& $1.57$ & $1.42\times10^{-2}$ & $1.67$  & $3.500$ \\
			\midrule
			\multicolumn{7}{@{}l}{\textit{Pressure-drop budget}} \\
			Arteriolar & \multicolumn{6}{l}{$32.27~\mathrm{mmHg}$ \quad $71.7\%$} \\
			Capillary  & \multicolumn{6}{l}{$5.95~\mathrm{mmHg}$ \quad\ \ $13.2\%$} \\
			Venular    & \multicolumn{6}{l}{$6.78~\mathrm{mmHg}$ \quad\ \ $15.1\%$} \\
			\bottomrule
	\end{tabular}}
\end{table}

	\section{Conclusions}\label{sec:conclusion}

We have formulated a data-driven framework for incompressible flow in hydraulic networks. This framework is posed directly on the topological structure of a graph and covers both the laminar (linear) and turbulent (nonlinear) regimes within a single setting. By removing the constitutive equation, we end up with a mixed-integer quadratic optimization (OP), whose vector unknowns are associated with the nodal pressure field, the edgewise gradient and flux fields, whereas the integer (or binary) variable corresponds to the data assignment through the edges. In particular, we also state a saddle-point system associated with the sub-optimization problem (SOP), which arises once the data pairs have already been assigned through the edges of the graph. To solve the OP, three solution algorithms are proposed: Brute Force, Alternating Direction Method (ADM), and Deterministic Annealing (DA). These are then exposed to a wide range of numerical experiments to assess accuracy, robustness, and reliability. 

For small networks and moderate dataset sizes, the GPU-accelerated Brute Force method ensures the global optimum solution. This enables a quantitative comparison of the iterative solvers, an approach that has not been reported so far. In addition, we show that the ADM algorithm strongly depends on the initialization strategy and reaches the optimum only when starting from a faithful surrogate. For the convex datasets of the laminar regime, for instance, the ADM-CMR initialization with a linear least-squares fit is the only one of the three to reach the global optimum at every dataset resolution; the Null-space initialization only approaches it once the data are dense enough, and the Random one falls short throughout. The same pattern occurs for non-convex datasets in the turbulent regime, where the ADM-CMR reaches the global optimum in the very first iteration by initializing with a piecewise-cubic Hermite reconstruction. In this way, even when the initialization comes from the solution of the classical problem with the manifold reconstructed by a surrogate, it can bias the outcome: a poor manifold reconstruction provides a poor initialization, causing ADM to settle into a local minimum.

This dependence on the initialization is then structurally removed by the DA. Since it starts from the data centroid, it reaches the same optima without any surrogate by performing an unsupervised clustering of the measured pairs onto the network edges. Moreover, its two temperature limits (the centroid as $\beta \to 0^{+}$ and the hard nearest-neighbor assignment as $\beta \to \infty$) are approached at a rate governed by the gap between the two closest candidate data points. This leads to two practical consequences confirmed by our experiments. First, the terminal temperature must be lowered as the dataset densifies. Second, the cooling rate only matters for non-convex manifolds; it is decisive in the turbulent regime but almost insensitive in the laminar one. Despite this, the DA has a larger iteration budget, since the soft projection weighs every datum, and the cooling schedule must be slow enough for the trajectory to reach the global basin before the weights sharpen.

We also assessed these solvers under noisy datasets. As the data are refined through denser sampling and lower noise, the median objective falls monotonically for every solver. In this case, both DA and ADM-CMR track the Brute Force baseline, while the Null-space and Random initializations perform poorly. We reported accuracy and admissibility separately to distinguish optimization failures from violations of the second law. On the coarsest and noisiest laminar datasets, the Null-space and Random initializations return inadmissible solutions in about half of the realizations. In contrast, DA fails in about $18\%$ of cases, and ADM-CMR in $12\%$ to $16\%$. In the turbulent regime, almost every run is admissible, but the poorly initialized strategies remain trapped one to two orders of magnitude above the optimum. For the realizations solved admissibly, DA matches the Brute Force lower bound throughout, achieving the same accuracy as the best-initialized ADM without needing a surrogate.

Turning to more challenging scenarios, the framework was further tested beyond small benchmarks on graphs mimicking Cartesian grids of increasing dimension. In these cases, DA maintains its rate of convergence under dataset refinement regardless of the network size. Conversely, the convergence rate of ADM-CMR degrades as the grid grows. These tests demonstrate the framework's applicability to more realistic hydraulic systems where computing the global optimum is infeasible.

Finally, the last two examples applied the framework to practical scenarios. The first example assembles fittings and pipes on a single graph. Here, multiple constitutive types highlight the annealing behavior. Because different datasets harden at different temperatures, the cooling schedule must be cold enough to accommodate the slowest dataset. For example, in a square loop with elbows, the tight arm data hardens much earlier than the pipe data. The per-dataset metric weight is also decisive in this case. The two datasets have vastly different aspect ratios, causing the solvers to disagree under a plain isotropic metric. The second example explores a biomedical context using an arteriovenous bed of $958$ edges and $31$ dataset keys, featuring a Carreau--Yasuda non-Newtonian model for the capillaries. The recovered solution successfully captures an arteriole-dominated pressure budget (accounting for $71.7\%$ of the $45~\mathrm{mmHg}$ imposed pressure). Although this network is an idealized construction rather than a calibrated physiological model, it demonstrates the framework operating successfully at scale.

\textbf{Future work.} Three limitations frame the natural continuation of this work. First, thermodynamic consistency is currently monitored rather than imposed. Enforcing the Clausius--Duhem inequality as a constraint, instead of detecting its violation \emph{a posteriori}, is an immediate extension. Second, the certified optimum is only available for small networks where enumeration is affordable. On larger networks, the reference is the best admissible objective rather than a proven global minimum. Third, the datasets used here are synthetic and sampled from known equations. Using experimental data with their own confidence levels and error bounds would better test the paradigm in its intended operating environment. Other natural extensions include modeling transient flows, non-standard edge geometries like curved edges, and complex lumped elements such as bifurcations or valves. We could also explore piecewise-linear representations of the gradient and flux fields along each edge, as well as adaptive annealing schedules.

	\section*{Acknowledgments}

	\begin{sloppypar}
	The authors gratefully acknowledge the financial support from the São Paulo Research Foundation (FAPESP), under grants 2013/07375-0, 2023/14427-8 and 2025/00975-9 (RFA) and grant 2025/27460-9 (PBB, direct-doctorate scholarship); from the National Council for Scientific and Technological Development -- CNPq, under grant 308704/2022-3 (RFA); from the Coordenação de Aperfeiçoamento de Pessoal de Nível Superior -- Brazil (CAPES), Finance Code 88887.975792/2024-00 (PBB); and from the European Research Council, through the ERC Consolidator Grant ``DATA-DRIVEN OFFSHORE'', Project ID 101083157 (CGG). The numerical experiments reported here were run on the Euler cluster at ICMC-USP: research carried out using the computational resources of the Center for Mathematical Sciences Applied to Industry (CeMEAI) funded by FAPESP (grant 2013/07375-0).
	\end{sloppypar}

	\appendix
	\section{Nomenclature}

\noindent The following symbols and operators are used consistently throughout this work.

\footnotesize

\vspace{0.4em}
\noindent \textbf{Graph and Topology}
\nopagebreak
\begin{longtable}{@{}>{\raggedright\arraybackslash}p{2.6cm} >{\raggedright\arraybackslash}p{4.7cm} @{\hspace{0.5cm}} >{\raggedright\arraybackslash}p{2.6cm} >{\raggedright\arraybackslash}p{4.7cm}@{}}
	$G = (\mathcal{V}, \mathcal{E})$ & Connected graph with node set $\mathcal{V}$ and edge set $\mathcal{E}$, each edge carrying an arbitrary reference orientation & $n_v, n_e$           & Number of nodes and edges \\
	$\mathcal{V}_D$, $\mathcal{V}_N$ & Subsets of nodes with prescribed pressure and flux & $\mathbf{D} \in \mathbb{R}^{n_v \times n_e}$ & Discrete divergence (incidence) operator \\
	$\mathbf{G} \in \mathbb{R}^{n_e \times n_v}$ & Discrete gradient operator; $\mathbf{G} = -\mathbf{D}^\top$ & $\mathbf{L} \in \mathbb{R}^{n_e \times n_e}$ & Diagonal matrix of edge lengths \\
	$\mathbf{B} \in \mathbb{R}^{|\mathcal{V}_D| \times n_v}$ & Selection matrix for Dirichlet nodes & $\mathbf{I}_D \in \mathbb{R}^{n_v \times n_v}$ & Diagonal mask matrix for Dirichlet boundary conditions \\
	$\mathbf{f} \in \mathbb{R}^{n_v}$ & Prescribed nodal source/sink term; $f_i > 0$ injects fluid at node $\mathbf{v}_i$ & $\uH_D \in \mathbb{R}^{|\mathcal{V}_D|}$ & Prescribed nodal pressures on $\mathcal{V}_D$ \\
\end{longtable}

\noindent \textbf{Primal Fields and Constitutive Parameters}
\nopagebreak
\begin{longtable}{@{}>{\raggedright\arraybackslash}p{2.6cm} >{\raggedright\arraybackslash}p{4.7cm} @{\hspace{0.5cm}} >{\raggedright\arraybackslash}p{2.6cm} >{\raggedright\arraybackslash}p{4.7cm}@{}}
	$\uH \in \mathbb{R}^{n_v}$  & Vector of nodal pressures & $\eH \in \mathbb{R}^{n_e}$  & Vector of edge-wise pressure gradients \\
	$\sH \in \mathbb{R}^{n_e}$  & Vector of edge-wise volumetric fluxes & $\etH, \stH$                & Data fields (assigned pressure-gradient and flux pairs) \\
	$\mathbf{K} \in \mathbb{R}^{n_e \times n_e}$ & Diagonal matrix of hydraulic conductances & $K_k$                       & Hydraulic conductance of edge $e_k$ \\
	$R_k$                       & Hydraulic resistance per unit length of edge $e_k$ & $f_k$                       & Darcy friction factor on edge $e_k$ \\
	$\Theta_k$, $\Phi_k$        & Auxiliary coefficients of the \emph{Churchill} correlation on edge $e_k$ & $\mathrm{Re}_k$             & Reynolds number on edge $e_k$ \\
	$\mathrm{Re}_{\max}$        & Target maximum Reynolds number of the sampling & $q_{\max}$ & Largest sampled flux, $(\pi/4)\,(\mu_f/\rho)\,D\,\mathrm{Re}_{\max}$ \\
\end{longtable}

\noindent \textbf{Data-Driven Formulation}
\nopagebreak
\begin{longtable}{@{}>{\raggedright\arraybackslash}p{2.6cm} >{\raggedright\arraybackslash}p{4.7cm} @{\hspace{0.5cm}} >{\raggedright\arraybackslash}p{2.6cm} >{\raggedright\arraybackslash}p{4.7cm}@{}}
	$\mathcal{M}$               & Set of distinct dataset keys present in the network & $\mu(k)$                   & Dataset key assigned to edge $e_k$; $\mu(k) \in \mathcal{M}$ (distinct from the multiplier $\mH$ and the viscosity $\mu_f$) \\
	$\mathcal{E}_\mu$          & Subset of edges sharing dataset key $\mu$; $\mathcal{E} = \bigsqcup_{\mu} \mathcal{E}_\mu$ & $\Gamma^{(\mu)}$           & Constitutive manifold of key $\mu$; $\mathcal{D}^{(\mu)}$ is a finite sample of it \\
	$\mathcal{D}^{(\mu)}$      & Dataset of pressure-gradient/flux pairs for dataset key $\mu$ & $\mathcal{D}$               & Pooled dataset, $\mathcal{D} = \bigsqcup_{\mu} \mathcal{D}^{(\mu)}$ \\
	$\mathcal{N}_m^{(\mu)}$    & Number of data pairs in $\mathcal{D}^{(\mu)}$ & $\mathcal{N}_m$             & Number of data pairs sampled per dataset key; equals $|\mathcal{D}| = \sum_{\mu} \mathcal{N}_m^{(\mu)}$ when $|\mathcal{M}| = 1$ \\
	$\chi_n^{(\mu)}$           & Binary data-assignment function for pair $n$ of key $\mu$ & $\mathcal{S}$               & Admissible set of data-assignment variables \\
	$N_{\mathrm{comb}}$         & Number of candidate data assignments over the network, $|\mathcal{S}|$ & $\mathbf{C} \in \mathbb{R}^{n_e \times n_e}$ & Symmetric positive-definite diagonal weight matrix in $\mathcal{J}$ \\
	$C_k$                       & $k$-th diagonal entry of $\mathbf{C}$, $C_k = c^{(\mu(k))}$ & $c^{(\mu)}$                & Per-dataset metric weight; scalar $c$ when $|\mathcal{M}|=1$ \\
	$\sigma_{g}^{(\mu)}, \sigma_{q}^{(\mu)}$ & Standard deviations of the $g$ and $q$ coordinates over $\mathcal{D}^{(\mu)}$ & $\|\cdot\|_{\mathbf{C}}^2$  & $\mathbf{C}$-weighted squared distance \\
	$\mathcal{A}$               & Physically admissible set (affine subspace) & $\mathcal{Z}$               & Constitutive-data set (discrete, non-convex) \\
	$\mathbf{z} = (\eH,\sH)$    & Primal state in phase space & $\mathbf{\tilde{z}} = (\etH,\stH)$ & Data state assigned to the edges \\
	$\mathcal{J}$               & Proximity functional (objective function), weighted by $\mathbf{C}$ & $\mathcal{L}$               & Lagrangian \\
	$\lH \in \mathbb{R}^{n_v}$  & Lagrange multiplier enforcing mass balance & $\mH \in \mathbb{R}^{n_e}$  & Lagrange multiplier enforcing compatibility \\
	$P_{\mathcal{A}},\, P_{\mathcal{Z}}$ & Projection operators onto $\mathcal{A}$ and $\mathcal{Z}$ & $\hat{q}_h$                 & Piecewise-cubic Hermite surrogate of the constitutive manifold, fitted to $\mathcal{D}$ by the CMR initialization \\
	$\mathbf{M}^\dagger$        & Moore--Penrose pseudo-inverse of $\mathbf{M}$ &  &  \\
\end{longtable}

\noindent \textbf{Fluid and Geometric Properties}
\nopagebreak
\begin{longtable}{@{}>{\raggedright\arraybackslash}p{2.6cm} >{\raggedright\arraybackslash}p{4.7cm} @{\hspace{0.5cm}} >{\raggedright\arraybackslash}p{2.6cm} >{\raggedright\arraybackslash}p{4.7cm}@{}}
	$\rho$                      & Fluid density & $\mu_f$                       & Dynamic viscosity \\
	$\eta(\dot\gamma)$          & Shear-rate-dependent viscosity of a generalized Newtonian fluid (distinct from the noise level $\eta$) & $\mu_{f,\mathrm{app}}$      & Apparent viscosity of the \emph{Carreau--Yasuda} closure \\
	$\dot\gamma_k$              & Wall shear rate on edge $e_k$ & $D_k$, $r_k$, $L_k$        & Diameter, inner radius, and length of edge $e_k$ \\
	$\varepsilon_k$             & Absolute roughness of edge $e_k$ & $A_k$                       & Cross-sectional area of edge $e_k$ \\
	$\Delta p_c$, $q_c$, $K_c$, $L_c$ & Characteristic pressure drop, flux, conductance, and length & $D_c$, $v_c$                & Characteristic diameter and velocity \\
	$g_c$                       & Characteristic pressure gradient, $g_c = \Delta p_c / L_c$ & $f_{\mathrm{ref}}$          & Representative friction factor for the turbulent flux scale \\
	$\tau_w$                    & Wall shear stress on edge $e_k$, $\tau_w = (D_k/4)\,|g_k|$ & $K_L$, $K_1$, $K_\infty$    & Loss coefficient of a fitting and the two constants of the \emph{Hooper} correlation \\
	$L_{\mathrm{arm}}$          & Length of a fitting arm sub-edge, $L_{\mathrm{arm}} = 2D$ & $\alpha$                    & Daughter-branch asymmetry of the arteriovenous tree \\
\end{longtable}

\vspace{0.4em}
\noindent \textbf{Solution Algorithms}
\nopagebreak
\begin{longtable}{@{}>{\raggedright\arraybackslash}p{2.6cm} >{\raggedright\arraybackslash}p{4.7cm} @{\hspace{0.5cm}} >{\raggedright\arraybackslash}p{2.6cm} >{\raggedright\arraybackslash}p{4.7cm}@{}}
	$J_{\max}$                  & Maximum number of iterations & $k_{\mathrm{fit}}$          & Effective conductance of the linear least-squares CMR fit \\
	$J_{\mathrm{cool}}$         & Number of iterations over which $\beta$ is raised from $\beta_0$ to $\beta_{\max}$ & $\bar{\mathcal{J}}$ & Mean per-edge objective, $\bar{\mathcal{J}} = \mathcal{J}/n_e$ \\
	$\overline{\mathcal{S}}$, $\overline{\mathcal{Z}}$ & Convex hulls of $\mathcal{S}$ and $\mathcal{Z}$ & $w_n(e_k)$                  & Soft membership weight of data pair $n$ on edge $e_k$ \\
	$d_{k,n}$                   & $\mathbf{C}$-weighted distance from the state on edge $e_k$ to data pair $n$ & $d_{(1)}, d_{(2)}$          & Two smallest such distances on a given edge \\
	$\beta$, $\beta_0$, $\beta_{\max}$ & Inverse temperature, and its initial and terminal values & $\gamma$                    & Geometric cooling ratio of the annealing schedule (distinct from the shear rate $\dot\gamma$) \\
	$\mathcal{F}_k^{\beta}$     & Free-energy functional minimized by the soft projection on edge $e_k$ & $\mathcal{H}$               & Shannon entropy of the edge weight vector \\
	$N_{\mathrm{eff}}$          & Effective number of active data points per edge, $N_{\mathrm{eff}} = \exp(\mathcal{H})$ & $P_{\mathcal{Z}}^{\beta}$   & Soft (temperature-dependent) data projection \\
\end{longtable}

\vspace{0.4em}
\noindent \textbf{Synthetic Data Perturbation}
\nopagebreak
\begin{longtable}{@{}>{\raggedright\arraybackslash}p{2.6cm} >{\raggedright\arraybackslash}p{4.7cm} @{\hspace{0.5cm}} >{\raggedright\arraybackslash}p{2.6cm} >{\raggedright\arraybackslash}p{4.7cm}@{}}
	$\mathcal{D}^{\xi}$         & Noise-perturbed dataset & $\xi_n^{g}, \xi_n^{q}$      & Perturbations added to the $g$ and $q$ coordinates \\
	$\eta$                      & Noise level (standard deviation of the perturbations) &  &  \\
\end{longtable}

\vspace{0.5em}
\noindent A few letters are reused across contexts. Their specific meaning is always clear from the argument or the section. For example, $\lambda$ is a Lagrange multiplier in Secs.~\ref{sec:data_driven} and~\ref{sec:solution_algorithms}, but represents the \emph{Carreau--Yasuda} relaxation time in Secs.~\ref{sec:hyd_net_model} and~\ref{sec:numerical_experiments}. The letter $n$ indexes a data pair throughout the data-driven formulation and serves as the power-law index in the constitutive closures. Finally, $k$ indexes an edge everywhere, except in the \emph{Murray} relation (where it is the branching exponent) and in $k_{\mathrm{fit}}$ (where it is the fitted conductance).

\vspace{0.5em}
\noindent Section~\ref{sec:hyd_net_model} uses a superscript~$(\cdot)^*$ to denote dimensionless quantities alongside their dimensional counterparts. From Sec.~\ref{sec:data_driven} onward, all quantities are dimensionless, and the asterisk is dropped for readability.

\normalsize

\section{Data Generation and Nonlinear Solver}
\label{appendix:algorithms}

This appendix gives the two algorithms built on the \emph{Churchill} closure: the generation of the synthetic constitutive datasets of Sec.~\ref{sec:local_constitutive_sampling}, and the Picard scheme used for the classical nonlinear reference solutions.

\begin{algorithm}[H]
	\caption{Synthetic Constitutive Data Generation (Churchill Model)}
	\label{alg:data_generation_churchill}
	\footnotesize
	\begin{algorithmic}[1]
		\Require Pipe properties $\{(D,\varepsilon)\}$, one per key $\mu \in \mathcal{M}$; density $\rho$ and viscosity $\mu_f$; $\mathrm{Re}_{\max}$; samples per key $\mathcal{N}_m^{(\mu)}$; scales $q_c, g_c$.
		\Ensure Dimensionless dataset $\mathcal{D} = \{(\tilde{g}_n, \tilde{q}_n)\}_{n=1}^{\mathcal{N}_m}$.
		\State $\mathcal{D} \gets \emptyset$
		\For{each pipe configuration $(D, \varepsilon)$}
		\State $q_{\max} \gets \frac{\mathrm{Re}_{\max} \pi D \mu_f}{4 \rho}$ \Comment{physical flow limit}
		\State Generate a uniform grid of $\mathcal{N}_m^{(\mu)}$ flow rates $q \in [-q_{\max}, q_{\max}]$
		\For{each $q$ in the grid}
		\If{$q = 0$}
		\State $g \gets 0$ \Comment{zero-gradient condition}
		\Else
		\State $\mathrm{Re} \gets \frac{4 \rho |q|}{\pi D \mu_f}$
		\State $f \gets$ Churchill friction factor, Eq.~\eqref{eq:churchill}, at $(\mathrm{Re}, \varepsilon/D)$
		\State $g \gets -f \left( \frac{8 \rho}{\pi^2 D^5} \right) |q| q$ \Comment{\emph{Darcy--Weisbach}}
		\EndIf
		\State $\tilde{g} \gets g/g_c$, \quad $\tilde{q} \gets q/q_c$ \Comment{nondimensionalize}
		\State $\mathcal{D} \gets \mathcal{D} \cup \{(\tilde{g}, \tilde{q})\}$
		\EndFor
		\EndFor
		\State \Return $\mathcal{D}$
	\end{algorithmic}
\end{algorithm}

\begin{algorithm}[H]
	\caption{Picard Iterative Scheme for Nonlinear Hydraulic Networks}
	\label{alg:picard_solver}
	\footnotesize
	\begin{algorithmic}[1]
		\Require Topology $(\mathbf{D}, \mathbf{G}, \mathbf{L})$; boundary data $(\uH_D, \mathbf{f})$; closure $q = \hat{q}(g)$; tolerance $\epsilon_{\mathrm{tol}}$; $J_{\mathrm{max}}$; threshold $\delta$.
		\Ensure Converged primal fields $(\uH, \eH, \sH)$.
		\State Set initial uniform conductance $K_0 > 0$
		\State Solve $(\mathbf{D}(K_0\mathbf{I})\mathbf{L}^{-1}\mathbf{D}^{\top})\uH^{(0)} = \mathbf{f}$ s.t. $\mathbf{B}\uH^{(0)} = \uH_D$
		\State $\eH^{(0)} \gets \mathbf{L}^{-1}\mathbf{G}\,\uH^{(0)}$; \quad $j \gets 0$; \quad $\mathrm{error} \gets \infty$
		\While{$\mathrm{error} > \epsilon_{\mathrm{tol}}$ \textbf{and} $j < J_{\mathrm{max}}$}
		\For{each edge $e \in \mathcal{E}$}
		\If{$|\eH_e^{(j)}| < \delta$}
		\State $K_{ee}^{(j)} \gets -\lim_{g \to 0} \frac{d\hat{q}}{dg}(g)$ \Comment{analytic limit of the secant}
		\Else
		\State $K_{ee}^{(j)} \gets -\hat{q}(\eH_e^{(j)})\,/\,\eH_e^{(j)}$ \Comment{secant approximation}
		\EndIf
		\EndFor
		\State $\mathbf{A}^{(j)} \gets \mathbf{D}\mathbf{K}^{(j)}\mathbf{L}^{-1}\mathbf{D}^{\top}$ \Comment{$\mathbf{K}^{(j)} = \mathrm{diag}(K_{ee}^{(j)})$}
		\State Solve $\mathbf{A}^{(j)}\uH^{(j+1)} = \mathbf{f}$ s.t. $\mathbf{B}\uH^{(j+1)} = \uH_D$
		\State $\eH^{(j+1)} \gets \mathbf{L}^{-1}\mathbf{G}\,\uH^{(j+1)}$; \quad $\sH^{(j+1)} \gets -\mathbf{K}^{(j)}\eH^{(j+1)}$
		\State $\mathrm{error} \gets \|\uH^{(j+1)} - \uH^{(j)}\|_2 \,/\, \|\uH^{(j+1)}\|_2$; \quad $j \gets j + 1$
		\EndWhile
		\State \Return $(\uH^{(j)}, \eH^{(j)}, \sH^{(j)})$
	\end{algorithmic}
\end{algorithm}

\section{Moore--Penrose Pseudo-inverse}
\label{appendix:pseudoinverse}

The initialization strategies of Sec.~\ref{sec:adm} rest on two full-rank linear problems -- the least-squares fit of Sec.~\ref{sec:cmr}(a) and the minimum-norm state of Sec.~\ref{sec:null_space_init} -- whose unique solutions are expressed through the \emph{Moore--Penrose pseudo-inverse}, in its overdetermined (left) and underdetermined (right) forms respectively.

\begin{definition}[Moore--Penrose Pseudo-inverse]\label{def:pseudo_inverse}
	Let $\mathbf{M} \in \mathbb{R}^{m \times n}$ be of full rank. Its Moore--Penrose pseudo-inverse $\mathbf{M}^{\dagger}$ is the \emph{left} inverse when $m \geq n$ and the \emph{right} inverse when $m \leq n$:
	\[
	\mathbf{M}^{\dagger} := (\mathbf{M}^{\top}\mathbf{M})^{-1}\mathbf{M}^{\top}
	\quad (m \geq n),
	\qquad\qquad
	\mathbf{M}^{\dagger} := \mathbf{M}^{\top}(\mathbf{M}\,\mathbf{M}^{\top})^{-1}
	\quad (m \leq n).
	\]
\end{definition}

\begin{theorem}[Invertibility of the Gram matrix]\label{theo:gram_spd}
	Let $\mathbf{M} \in \mathbb{R}^{m \times n}$. If $\rank(\mathbf{M}) = n$, then $\mathbf{M}^{\top}\mathbf{M}$ is symmetric positive-definite; if $\rank(\mathbf{M}) = m$, then $\mathbf{M}\mathbf{M}^{\top}$ is symmetric positive-definite. In either case the corresponding Gram matrix is invertible, so $\mathbf{M}^{\dagger}$ is well defined.
\end{theorem}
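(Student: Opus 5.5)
The plan is to prove the column-rank case directly through the quadratic form and then obtain the row-rank case by applying that result to the transpose. This mirrors the argument used for the weighted Laplacian in Eq.~\eqref{eq:laplacian_quadratic_form}.

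Suppose $\rank(\mathbf{M}) = n$. Symmetry is immediate, since $(\mathbf{M}^{\top}\mathbf{M})^{\top} = \mathbf{M}^{\top}\mathbf{M}$. For positive-definiteness, take any $\mathbf{v} \in \mathbb{R}^n$ and write
\[
\mathbf{v}^{\top}\mathbf{M}^{\top}\mathbf{M}\,\mathbf{v} = (\mathbf{M}\mathbf{v})^{\top}(\mathbf{M}\mathbf{v}) = \|\mathbf{M}\mathbf{v}\|_2^2 \geq 0 .
\]
Equality holds exactly when $\mathbf{M}\mathbf{v} = \mathbf{0}$. By the rank--nullity theorem, $\rank(\mathbf{M}) = n$ forces $\ker(\mathbf{M}) = \{\mathbf{0}\}$, so $\mathbf{v} = \mathbf{0}$. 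The form is therefore strictly positive for every $\mathbf{v} \neq \mathbf{0}$, and $\mathbf{M}^{\top}\mathbf{M}$ is symmetric positive-definite.

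Now suppose $\rank(\mathbf{M}) = m$. Set $\mathbf{N} := \mathbf{M}^{\top} \in \mathbb{R}^{n \times m}$. Row rank equals column rank, so $\rank(\mathbf{N}) = m$, which is the number of columns of $\mathbf{N}$. The first case then gives that $\mathbf{N}^{\top}\mathbf{N} = \mathbf{M}\mathbf{M}^{\top}$ is symmetric positive-definite.

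In either case, a symmetric positive-definite matrix has only positive eigenvalues, so its determinant is nonzero. Equivalently, $\mathbf{G}\mathbf{v} = \mathbf{0}$ would give $\mathbf{v}^{\top}\mathbf{G}\mathbf{v} = 0$, hence $\mathbf{v} = \mathbf{0}$. The relevant Gram matrix is therefore invertible, and the formulas of Definition~\ref{def:pseudo_inverse} are well defined.

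The only step needing care is the equivalence between full column rank and a trivial kernel, which rank--nullity supplies. Everything else is a direct computation, so I do not expect a genuine obstacle.
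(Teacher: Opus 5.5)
Your proof is correct and follows the paper's argument essentially verbatim: the quadratic form $\mathbf{v}^{\top}\mathbf{M}^{\top}\mathbf{M}\,\mathbf{v} = \|\mathbf{M}\mathbf{v}\|_2^2$, a trivial kernel from full column rank, and the substitution $\mathbf{M}\mapsto\mathbf{M}^{\top}$ for the row-rank case. The differences are only cosmetic: you invoke rank--nullity and row rank $=$ column rank explicitly where the paper cites linear independence of the columns; also, avoid writing $\mathbf{G}$ for a generic Gram matrix, since the paper reserves that symbol for the discrete gradient operator.
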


\begin{proof}
	We argue for $\mathbf{M}^{\top}\mathbf{M}$; the case of $\mathbf{M}\mathbf{M}^{\top}$ is identical upon replacing $\mathbf{M}$ by $\mathbf{M}^{\top}$. Symmetry is immediate, and for any $\mathbf{v}$ we have $\mathbf{v}^{\top}(\mathbf{M}^{\top}\mathbf{M})\mathbf{v} = \|\mathbf{M}\mathbf{v}\|_2^2 \geq 0$. If $\rank(\mathbf{M}) = n$, the columns of $\mathbf{M}$ are linearly independent, so $\ker(\mathbf{M}) = \{\mathbf{0}\}$ and $\|\mathbf{M}\mathbf{v}\|_2^2 = 0$ only for $\mathbf{v} = \mathbf{0}$; hence $\mathbf{M}^{\top}\mathbf{M}$ is positive-definite and, in particular, invertible.
\end{proof}

Theorem~\ref{theo:gram_spd} settles both initialization problems. The least-squares objective is a strictly convex quadratic whose Hessian is the SPD Gram matrix $\mathbf{M}^{\top}\mathbf{M}$, so it has the unique minimizer $\mathbf{M}^{\dagger}\mathbf{q}_{\mathcal{D}}$ -- covering both the Hermite projection of Eq.~\eqref{eq:hermite_normal_system} and, up to the sign carried by the model $q = -k\,g$, the scalar fit of Eq.~\eqref{eq:normal_system}. The minimum-norm objective is likewise strictly convex under an affine constraint; its stationarity conditions read $\mathbf{x} = \mathbf{M}^{\top}\boldsymbol{\nu}$ and $\mathbf{M}\mathbf{x} = \mathbf{r}$, whence $\mathbf{M}\mathbf{M}^{\top}\boldsymbol{\nu} = \mathbf{r}$, and the invertibility of $\mathbf{M}\mathbf{M}^{\top}$ yields the unique solution $\mathbf{x}_0 = \mathbf{M}^{\dagger}\mathbf{r}$.

	\bibliographystyle{siam}
	\bibliography{bibliography}
	
\end{document}